\documentclass[11pt]{article}
\usepackage{amssymb}
\usepackage{mathrsfs}
\usepackage{amsfonts}
\textheight = 9.21in \textwidth = 5.5in
\headsep =0.0in \headheight = 0.0in \topmargin = 0.3in 
\usepackage{amsmath,amsthm,amsfonts,amssymb,amscd}
\usepackage{tikz}

\allowdisplaybreaks[4]
\newtheorem {Lemma}{Lemma}[section]

\newtheorem {Theorem} [Lemma]{Theorem}
\newtheorem {Corollary}[Lemma]{Corollary}
\newenvironment {Proof} {\noindent {\bf Proof.}}{\quad $\square$\par\vspace{3mm}}

\begin{document}

\title{Erd\H{o}s-Gallai Stability Theorem for Linear Forests \thanks{ This work is supported by  the Joint NSFC-ISF Research Program (jointly funded by
the National Natural Science Foundation of China and the Israel Science Foundation (No. 11561141001)),
the National Natural Science Foundation of China (Nos.11531001 and 11271256),  Innovation Program of Shanghai Municipal
 Education Commission (No. 14ZZ016) and Specialized Research Fund for the Doctoral Program of Higher Education (No.20130073110075).
\newline \indent $^{\dagger}$Corresponding author:
Xiao-Dong Zhang (Email: xiaodong@sjtu.edu.cn),
}
}

\author{ Ming-Zhu Chen and Xiao-Dong Zhang
\\
School of Mathematical Science,  MOE-LSC, SHL-MAC \\
Shanghai Jiao Tong University,
Shanghai 200240, P. R. China}

\date{}
\maketitle

\begin{abstract}

The Erd\H{o}s-Gallai Theorem states that  every graph of average degree more than $l-2$ contains a path of order $l$ for $l\ge 2$.
In this paper, we obtain a stability version of the Erd\H{o}s-Gallai Theorem in terms of minimum degree.  Let $G$ be a connected graph of order $n$ and
 $F=(\bigcup_{i=1}^kP_{2a_i})\bigcup(\bigcup_{i=1}^lP_{2b_i+1})$ be $k+l$ disjoint paths of order $2a_1, \ldots, 2a_{k}, 2b_1+1, \ldots, 2b_l+1,$ respectively,
 where $k\ge 0$, $0\le l\le 2$, and $k+l\geq 2$. If the minimum degree $\delta(G)\ge \sum_{i=1}^ka_i+\sum_{i=1}^lb_i-1$, then  $F\subseteq G$
 except several classes of graphs for  sufficiently large  $n$, which extends and strengths the results of Ali and Staton for an even path and
Yuan and Nikiforov for an odd path.
\\
{\it AMS Classification:}  05C35, 05C05\\ \\
{\it Key words:} Erd\H{o}s-Gallai Theorem;   Stable problem; Linear forest; Path.
\end{abstract}

\section{Introduction}
\subsection{Notation}

Let $G$ be a  finite simple undirected graph with vertex set $V(G)$ and edge set $E(G)$.
Denote  $e(G)$ by the number of edges of $G$.
 For $v\in V(G)$,  the \emph{neighborhood} $N_G(v)$ of $v$  is $\{u: uv\in E(G)\}$ and the \emph{degree} $d_G(v)$ of $v$  is $|N_G(v)|$.
For a subgraph $H$  of $G$,  the \emph{$H$-neighborhood}  $N_H(v)$ of $v$  is $N_G(v)\bigcap V(H)$ and the
\emph{$H$-degree} $d_H(v)$ of $v$  is $|N_H(v)|$.
 Denote  $\delta(G)$ by   minimum degree of $G$.
Denote  $P_n$, $K_n$,  and $K_{m,n}$  by a path of order $n$,  a complete graph of order $n$, and
 a complete bipartite graph with  partition of size $m$ and $n$, respectively.
For two disjoint graphs $G$ and $H$,  denote $G\bigcup H$ and $G\bigvee H$ by the disjoint union of $G$ and $H$, and the join of $G$ and $H$ which is  obtained from $G\bigcup H$ by joining every vertex of $G$ to every vertex of $H$, respectively.
Moreover,  $kG$  denotes a graph consisting of $k$ disjoint copies of  $G$ and  $\overline{G}$ denotes the complement of $G$.
For $X\subseteq V(G)$,  $G[X]$ denotes the graph induced by $X$ and write $E(X)$ for $E(G[X])$.
For $X, Y\subseteq V(G)$,  $e(X,Y)$ denotes the number of the  edges of  $G$ with one end vertex in $X$ and the other  in $Y$.
For two graphs $G$ and $H$, write $H\subseteq G$ if $G$ contains $H$ as a subgraph, and $H\nsubseteq G$  otherwise.
An \emph{odd (even) path} is a path of odd (even) order.
A graph is  \emph{connected} if there is a path between every pair of vertices. A connected graph $G$ is   \emph{$k$-connected} if $|V(G)|>k$ and $G-X$ is connected for every set $X\subseteq V(G)$ with $|X|<k$.
A graph $G$ is \emph{$F$-free} if it  does not contain $F$ as a subgraph.
A \emph{star forest} is a forest whose components are stars and a \emph{linear forest}
is a forest whose  components are paths.
A \emph{cut vertex (edge)} of a graph $G$ is a vertex (edge) whose removal increases the number of components of $G$.
A \emph{block} of is a maximal connected subgraph without any cut vertex.
 An \emph{end block} of  $G$ is block that contains precisely a  cut vertex of $G$.
For two end blocks $B_1$ and $B_2$ of a graph $G$,
 denote $p(B_1,B_2)$   by  the order of a longest path between  the unique cut vertex of $G$ in $V(B_1)$ and  the unique cut vertex of $G$ in $V(B_2)$.
For other notations not defined here, readers are referred to \cite{West}.
\subsection{History and known results}

The study of Tur\'{a}n-type problems is the center of  extremal graph theory.
Erd\H{o}s and Gallai in \cite{EG} proved the following key result, which opens a new subject for degenerate graphs:
\begin{Theorem}\label{Thm1.1}\cite{EG}
Let $G$ be a   graph of order $n$.   If $e(G)>\frac{(l-2)n}{2}$, where $l\geq2$,  then $P_l\subseteq G$.
\end{Theorem}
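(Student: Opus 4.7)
The plan is to prove Theorem 1.1 by induction on $n$, reducing in two different ways (via components and via low-degree vertices) and closing with a longest-path argument. First observe that $e(G)>(l-2)n/2$ forces $n\ge l$, since otherwise $e(G)\le\binom{n}{2}=n(n-1)/2\le(l-2)n/2$, a contradiction.

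In the inductive step, suppose first that $G$ is disconnected with components $G_1,\ldots,G_r$, and that no $G_i$ contains $P_l$. Applying the inductive hypothesis to each component yields $e(G_i)\le(l-2)|V(G_i)|/2$, and summing over $i$ contradicts the hypothesis on $e(G)$. So we may assume $G$ is connected. Next, if some vertex $v$ satisfies $d_G(v)\le(l-2)/2$, then $e(G-v)>(l-2)(n-1)/2$, and induction on $G-v$ delivers $P_l\subseteq G-v\subseteq G$. Otherwise $\delta(G)\ge\lceil(l-1)/2\rceil$, and we need to produce $P_l$ directly.

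For this last case I would use a longest-path rotation argument. Let $P=v_1v_2\cdots v_t$ be a longest path, so that $N_G(v_1),N_G(v_t)\subseteq V(P)$ by maximality. Set $S=\{i:v_1v_{i+1}\in E(G)\}$ and $T=\{i:v_iv_t\in E(G)\}$, both contained in $\{1,\ldots,t-1\}$, with $|S|=d_G(v_1)$ and $|T|=d_G(v_t)$. Then $|S|+|T|\ge 2\delta(G)$; either $t\ge 2\delta(G)+1$ directly, or $|S|+|T|\ge t$, in which case pigeonhole produces some $i\in S\cap T$, yielding a cycle on $V(P)$. In the latter sub-case, if $t<n$ then connectivity supplies an edge from $V(G)\setminus V(P)$ to $V(P)$, and breaking the cycle at that attachment vertex produces a path longer than $P$, contradicting maximality. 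Therefore $t\ge\min(2\delta(G)+1,n)\ge\min(l,n)=l$, and $P_l\subseteq G$.

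The main step requiring care is the closing rotation argument. For even $l$ the inequality $2\delta(G)+1\ge l+1$ has slack, but for odd $l$ it is exactly $l$, so the pigeonhole and cycle-extension must be executed precisely. The strict edge inequality is what excludes the borderline extremal configurations such as disjoint copies of $K_{l-1}$ (which satisfy $e(G)=(l-2)n/2$ with equality and avoid $P_l$), and this strictness is absorbed during the component reduction at the start of the induction.
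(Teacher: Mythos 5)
The paper does not prove this statement at all: it is quoted verbatim from Erd\H{o}s and Gallai \cite{EG} as background, so there is no in-paper argument to compare against. Your proof is correct and is essentially the classical one: the double induction (splitting off components, then deleting a vertex of degree at most $(l-2)/2$, each step preserving the strict inequality) reduces to a connected graph with $\delta(G)\ge\lceil (l-1)/2\rceil$, and the longest-path rotation with $S=\{i: v_1v_{i+1}\in E\}$, $T=\{i: v_iv_t\in E\}$ correctly yields either $t\ge 2\delta(G)+1\ge l$ or a cycle spanning $V(P)$, which by connectivity and maximality of $P$ forces $t=n\ge l$ (the bound $n\ge l$ following from $e(G)\le\binom{n}{2}$). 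All the delicate points check out: the odd-$l$ case where $2\delta(G)+1=l$ exactly is handled by the pigeonhole dichotomy, and the strict inequality is indeed what rules out the extremal union of cliques $K_{l-1}$. This is a complete and correct proof of the cited theorem.
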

 Let $H_{n,l,a}:=K_a\bigvee (K_{l-2a}\bigcup \overline{K}_{n-l+a})$ with
 $h(n,l,a):=e(H_{n,l,a})=\binom{l-a}{2}+a(n-l+a)$ for $a\le \lfloor\frac{l}{2}\rfloor.$
Recently,  F\"{u}redi,  Kostochka, and Verstra\"{e}te  \cite{FKV} obtained a stability theorem for paths.
\begin{Theorem}\label{Thm1.1}\cite{FKV}
Let $G$ be a connected graph of order $n$, where  $t\geq2$ and $n\geq 3t-1$.
If $e(G)> h(n+1,l+1,t-1)-n$, where $l\in\{2t,2t+1\} $, then $P_l\subseteq G$, unless one of following holds:\\
(i). $l=2t$, $l\neq6$, and $G\subseteq H_{n,l,t-1}$;\\
(ii). $l=2t+1$ or $l=6$, and $G-A$ is a star forest for some $A\subseteq V(G)$ of size at most $l-1$.
\end{Theorem}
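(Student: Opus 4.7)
The plan is to combine the longest-path rotation argument with careful edge accounting to pin down the extremal structure. Let $P = v_1 v_2 \cdots v_p$ be a longest path in a connected $P_l$-free graph $G$ satisfying the hypothesis, so $p \leq l - 1$. Since $P$ is longest, $N(v_1), N(v_p) \subseteq V(P)$, and the standard P\'osa-type argument shows that if $V(G) \neq V(P)$, then the index sets $\{i : v_1 v_{i+1} \in E(G)\}$ and $\{i : v_p v_i \in E(G)\}$ are disjoint; in particular $d(v_1) + d(v_p) \leq p - 1$. Iterating rotations generates a set $S$ of ``rotatable endpoints'' each satisfying this degree bound, and any edge within $S$ combined with connectedness of $G$ would force a path longer than $P$ unless $G$ already contains $P_l$. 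Thus $S$ is nearly independent and its common neighborhood forms a small ``hub'' $A$ of size at most $t - 1$.

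Next, I would apply Theorem~\ref{Thm1.1} to the subgraphs $G[V(P)]$ and $G - V(P)$ to upper-bound their edge counts, and then bound the number of edges between $V(P)$ and $V(G) \setminus V(P)$ using the degree constraints on the rotatable endpoints. Comparing this upper bound with the threshold $h(n+1, l+1, t-1) - n$, a surplus can occur only when $p = l - 1$, the hub $A$ has size exactly $t - 1$, and $G - A$ is almost empty. For $l = 2t$ with $l \neq 6$, the only surviving edges outside $A$ form at most a single $K_2$, embedding $G$ into $H_{n, l, t-1}$ and yielding case~(i). For $l = 2t + 1$, the extra vertex demanded by the odd path length forces $G - A$ to be a star forest, yielding case~(ii). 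The anomalous case $l = 6$ arises because $H_{n, 6, 2}$ and the star-forest-plus-$5$-set family achieve the same edge count, so case~(i) must be merged into~(ii) there.

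The main obstacle is the tight edge accounting needed to match the threshold $\binom{l - t + 2}{2} + (t - 1)(n - l + t - 1) - n$ exactly against the structural conclusion. Specifically, one must rule out near-extremal configurations that mix a hub smaller than $t - 1$ with a richer residual graph but do not fit either exceptional family. Induction on $n$, by deleting a vertex of minimum degree to reduce to a smaller instance, should handle these cases: if some vertex $v$ satisfies $d(v) \leq t - 2$ then $G - v$ still exceeds the threshold and the inductive conclusion lifts back to $G$, whereas if $\delta(G) \geq t - 1$ a direct rotation-extension argument produces the desired $P_l$. A further subtlety is confirming that in case~(ii) the set $A$ may be taken of size at most $l - 1$, which likely requires one more round of applying Theorem~\ref{Thm1.1} to the non-star-forest part to trim $A$ down to the stated bound.
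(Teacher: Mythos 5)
This statement is quoted from F\"uredi--Kostochka--Verstra\"ete \cite{FKV}; the paper you were given does not prove it, so there is no internal proof to compare against. For the record, the proof in \cite{FKV} does not proceed by a direct path-rotation argument: it first establishes a stability theorem for long cycles in $2$-connected graphs (via a Kopylov-style ``disintegration'' of the graph relative to a longest cycle) and then deduces the path version by applying that result to $G\bigvee K_1$, which is exactly why the threshold has the form $h(n+1,l+1,t-1)-n$ with the parameters shifted by one. Your plan never exploits this reduction and instead tries to run the whole argument on the path directly, which is where it breaks.

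The concrete gaps are these. First, your terminal case is wrong: you claim that if $\delta(G)\geq t-1$ then ``a direct rotation-extension argument produces the desired $P_l$.'' The graph $S_{n,t-1}=K_{t-1}\bigvee \overline{K}_{n-t+1}$ is connected, has minimum degree $t-1$, satisfies $e(G)>h(n+1,l+1,t-1)-n$ for $n\geq 3t-1$, and contains no $P_{2t}$ (its longest path has order $2t-1$); it is precisely one of the exceptional graphs in case (i), so no rotation argument can succeed there, and your induction has nowhere to bottom out. Second, the assertion that iterated rotations produce a hub $A$ of size at most $t-1$ is exactly the content of the theorem and is not a consequence of P\'osa's lemma, which only gives $|N(S)|<2|S|$-type bounds; the step ``a surplus can occur only when $p=l-1$, $|A|=t-1$, and $G-A$ is almost empty'' is the entire difficulty and is asserted rather than argued. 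Third, applying the theorem inductively to $G[V(P)]$ and $G-V(P)$ is illegitimate as stated, since those subgraphs need not be connected and the hypothesis requires connectivity; likewise, lifting the structural conclusion from $G-v$ back to $G$ after deleting a vertex of degree at most $t-2$ is nontrivial (one must place $v$'s neighbours correctly inside $H_{n-1,l,t-1}$ or the star-forest structure), and $G-v$ may itself be disconnected. As written, the proposal is a plausible-sounding outline whose load-bearing steps are either false or unproved.
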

Another question in Tur\'{a}n-type problems which has received  extensive attention  is how large
minimum degree of a graph $G$ is to guarantee  $P_l\subseteq G$.
Erd\H{o}s and Gallai \cite{EG} and  Andrasfai \cite{A} proved the following result for an odd path.
\begin{Theorem}\label{Thm1.3}\cite{EG,A}
Let $G$ be a connected graph of order $n$, where $n\geq 2h+3\geq3$. If $\delta (G)\geq h+1$,
then $P_{2h+3}\subseteq G$.
\end{Theorem}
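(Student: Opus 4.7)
The plan is to use the classical longest-path extension argument going back to Dirac. I would take a longest path $P = v_1 v_2 \cdots v_p$ in $G$ and aim to show $p \geq 2h+3$, which immediately gives $P_{2h+3} \subseteq G$. The maximality of $P$ forces $N(v_1), N(v_p) \subseteq V(P)$; combined with $\delta(G) \geq h+1$, this already yields $p \geq h+2$.

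The heart of the argument is a P\'{o}sa-type pigeonhole step that rules out $p \leq 2h+2$. Assuming $p \leq 2h+2$, I would consider the index sets $I_1 = \{i : v_1 v_i \in E(G)\} \subseteq \{2, \ldots, p\}$ and $I_p = \{i : v_p v_i \in E(G)\} \subseteq \{1, \ldots, p-1\}$, each of size at least $h+1$. The shifted set $I_1 - 1 := \{i-1 : i \in I_1\}$ and $I_p$ both lie inside $\{1, \ldots, p-1\}$, a set of size at most $2h+1$, while the sum of their sizes is at least $2h+2$. Pigeonhole therefore produces an index $i$ with $v_{i+1} \in N(v_1)$ and $v_i \in N(v_p)$.

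With such an $i$ in hand, the cycle $C = v_1 v_2 \cdots v_i v_p v_{p-1} \cdots v_{i+1} v_1$ uses exactly the vertex set $V(P)$. Since $n \geq 2h+3 > p$, some vertex $w \in V(G) \setminus V(C)$ exists, and by connectivity of $G$ one such $w$ has a neighbor $u$ on $C$. Opening $C$ at an edge incident to $u$ yields a Hamiltonian path of $C$ starting at $u$, and appending the edge $uw$ produces a path of order $p+1$, contradicting the maximality of $P$. This gives the desired contradiction and hence $p \geq 2h+3$.

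I do not expect a real obstacle here; the argument uses nothing beyond the longest-path trick, the minimum-degree hypothesis, and the pigeonhole principle. The only point to check carefully is that the pigeonhole step produces a legitimate index (so that $C$ is a genuine cycle with no repeated vertices and no degenerate degeneration), but this is automatic from the ranges of $I_1-1$ and $I_p$. Thus the main work is really just the index bookkeeping.
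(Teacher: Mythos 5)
Your argument is correct, but there is nothing in the paper to compare it against: Theorem~\ref{Thm1.3} is quoted from Erd\H{o}s--Gallai and Andr\'asfai and the paper supplies no proof of it, using it only as a known ingredient. What you have written is the standard Dirac/P\'osa longest-path argument, and it goes through: maximality of $P$ confines $N(v_1)$ and $N(v_p)$ to $V(P)$, the shift-and-pigeonhole step on $I_1-1$ and $I_p$ inside $\{1,\dots,p-1\}$ (of size at most $2h+1$ against total size at least $2h+2$) is valid, the resulting cycle $C$ spans $V(P)$, and connectivity plus $n>p$ lets you absorb an outside vertex into a path of order $p+1$, contradicting maximality. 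The only place worth a sentence of care is the boundary case $h=0$, where $p$ could equal $2$ and the ``cycle'' degenerates to a single edge; there the conclusion $P_3\subseteq G$ is immediate from connectivity and $n\geq 3$, so nothing is lost. For $h\geq 1$ one has $p\geq h+2\geq 3$ and $C$ is a genuine cycle, so your bookkeeping is sound as stated.
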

Let $S_{n,h}:=K_h\bigvee \overline{K}_{n-h}$  and  $S^+_{n,h}$ be a graph obtained by adding an edge to $S_{n,h}$,
i.e., $S^+_{n,h}=K_h\bigvee (K_2\bigcup \overline{K}_{n-h-2})$ (see Fig.1).

\vspace{4mm}
\begin{center}
\begin{tikzpicture}[scale=1.4]
\draw (0,0) ellipse (1cm and 0.5cm);\draw (0,-1.5) ellipse (1cm and 0.5cm);
\path (-0.6,-0.2)  coordinate (P1);\path (0.6,-0.2)  coordinate (P2);
\path (-0.6,-1.3)  coordinate (P3);\path (0.6,-1.3)  coordinate (P4);
\path (-0.2,-0.2)  coordinate (Q1);\path (0,-0.2)  coordinate (Q2);\path (0.2,-0.2)  coordinate (Q3);
\path (-0.2,-1.3)  coordinate (Q4);\path (0,-1.3)  coordinate (Q5);\path (0.2,-1.3)  coordinate (Q6);
\foreach \i in {1,2,3,4}
{\fill (P\i) circle (1pt);}
\foreach \i in {1,...,6}
{\fill (Q\i) circle (0.6pt);}
\foreach \i in {1,2}
{
\foreach \j in {3,4}
{
\draw (P\i) -- (P\j);
}
}
\path (0,0.2) node () {\scriptsize{$K_h$}};\path (0,-1.7) node () {\scriptsize{$\overline{K}_{n-h}$}};
\path (0,-2.4) node () {$S_{n,h}$};

\draw (3,0) ellipse (1cm and 0.5cm);\draw (3,-1.5) ellipse (1cm and 0.5cm);
\path (2.4,-0.2)  coordinate (P1);\path (3.6,-0.2)  coordinate (P2);
\path (2.4,-1.3)  coordinate (P3);\path (3.6,-1.3)  coordinate (P4);
\path (2.8,-1.3)  coordinate (P5); \path (3,-1.3)  coordinate (P6);
\path (2.8,-0.2)  coordinate (Q1);\path (3,-0.2)  coordinate (Q2);\path (3.2,-0.2)  coordinate (Q3);
\path (3.2,-1.3)  coordinate (Q4);\path (3.3,-1.3)  coordinate (Q5);\path (3.4,-1.3)  coordinate (Q6);
\foreach \i in {1,...,6}
{\fill (P\i) circle (1pt);}
\foreach \i in {1,...,6}
{\fill (Q\i) circle (0.6pt);}
\foreach \i in {1,2}
{
\foreach \j in {3,...,6}
{
\draw (P\i) -- (P\j);
}
}
\draw (P3) -- (P5);
\path (3,0.2) node () {\scriptsize{$K_h$}};\path (3,-1.7) node () {\scriptsize{$ K_2\bigcup\overline{ K}_{n-h-2}$}};
\path (3,-2.4) node () {$S^+_{n,h}$};

\path (2,-3) node () { Fig.1. Graphs $S_{n,h}$ and $S_{n,h}^+$.};
\end{tikzpicture}
\end{center}

Given $t_1,t_2\geq 0$, let $L_{t_1,t_2,h,h+1}:=K_1\bigvee (t_1K_h\bigcup t_2K_{h+1})$ (see Fig.2). In particular, write $ L_{t,0,h,h+1}:=L_{t,h}$ (see Fig.2). The \emph{center} of  $L_{t_1,t_2,h,h+1}$ is the vertex of maximum degree in $L_{t_1,t_2,h,h+1}$.

\vspace{4mm}
\begin{center}
\begin{tikzpicture}[scale=1.4]
\path (1,0)  coordinate (P1);\path (0,0.3)  coordinate (P2);\path (0.5,1)  coordinate (P3);
\path (1.5,1)  coordinate (P4);\path (2,0.3)  coordinate (P5);
\path (0,-0.3)  coordinate (P6);\path (0.5,-1)  coordinate (P7);
\path (1.5,-1)  coordinate (P8);\path (2,-0.3)  coordinate (P9);
\path (0.8,0.7)  coordinate (Q1);\path (1,0.7)  coordinate (Q2);\path (1.2,0.7)  coordinate (Q3);
\path (0.8,-0.7)  coordinate (Q4);\path (1,-0.7)  coordinate (Q5);\path (1.2,-0.7)  coordinate (Q6);
\foreach \i in {1,...,9}
{\fill (P\i) circle (1pt);}
\foreach \i in {1,...,6}
{\fill (Q\i) circle (0.5pt);}
\draw (P1)--(P2)--(P3)--cycle;
\draw (P1)--(P4)--(P5)--cycle;
\draw (P1)--(P6)--(P7)--cycle;
\draw (P1)--(P8)--(P9)--cycle;
\path (0.5,0.4) node () {\scriptsize{$K_{h+1}$}};\path (1.6,0.4) node () {\scriptsize{$K_{h+1}$}};
\path (0.5,-0.4) node () {\scriptsize{$K_{h+2}$}};\path (1.6,-0.4) node () {\scriptsize{$K_{h+2}$}};
\path (1,0.9) node () {$t_1$};\path (1,-0.9) node () {$t_2$};
\path (1,-1.5) node () {$L_{t_1,t_2,h,h+1}$};

\path (5,-0.5)  coordinate (P1);\path (4,-0.2)  coordinate (P2);\path (4.5,0.5)  coordinate (P3);
\path (5.5,0.5)  coordinate (P4);\path (6,-0.2)  coordinate (P5);
\path (4.8,0.2)  coordinate (Q1);\path (5,0.2)  coordinate (Q2);\path (5.2,0.2)  coordinate (Q3);
\foreach \i in {1,...,5}
{\fill (P\i) circle (1pt);}
\foreach \i in {1,...,3}
{\fill (Q\i) circle (0.5pt);}
\draw (P1)--(P2)--(P3)--cycle;
\draw (P1)--(P4)--(P5)--cycle;
\path (4.5,-0.1) node () {\scriptsize{$K_{h+1}$}};\path (5.6,-0.1) node () {\scriptsize{$K_{h+1}$}};
\path (5,0.4) node () {$t$};
\path (5,-1.5) node () {$L_{t,h}$};
\path (3,-2.3) node () {Fig.2. Graphs $L_{t_1,t_2,h,h+1}$ and $L_{t,h}$.};
\end{tikzpicture}
\end{center}

For an even path, the question was answered by Ali and Staton \cite{AS}.
\begin{Theorem}\cite{AS}
Let $G$ be a  connected graph of order $n$, where $n\geq 2h+2$.
If $\delta (G)\geq h\geq1$, then
$P_{2h+2}\subseteq G$ unless either $G\subseteq S_{n,h}$ or  $G=L_{t,h}$, where $n = th+1$.
\end{Theorem}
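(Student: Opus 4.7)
My strategy is contradiction via rotation analysis of a longest path. Assume $G$ is $P_{2h+2}$-free, $\delta(G)\ge h$, and let $P=v_1v_2\cdots v_m$ be a longest path of $G$; then $m\le 2h+1$ and $N(v_1),N(v_m)\subseteq V(P)$ because otherwise $P$ could be extended.

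The first step is the standard no-crossing observation: if $v_1v_i$ and $v_mv_{i-1}$ are both in $E(G)$ for some $i$, then $v_iv_{i+1}\cdots v_mv_{i-1}v_{i-2}\cdots v_1$ is a cycle using every vertex of $V(P)$, and because $G$ is connected with $n\ge 2h+2>m$, some external vertex attaches to this cycle and produces a path longer than $P$, contradicting maximality. Hence $N(v_m)\cap\{v_{i-1}:v_i\in N(v_1)\}=\emptyset$. Combined with $d(v_1),d(v_m)\ge h$, a direct count on $V(P)$ forces $m=2h+1$ and $d(v_1)=d(v_m)=h$.

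Next I would apply P\'osa-type rotations to $P$. Any vertex arising as an endpoint of a longest path obtained from $P$ by iterated rotation again has degree exactly $h$, has all its neighbors on the path, and must satisfy the no-crossing constraint paired with the other current endpoint. Iterating this, I would show that $V(P)$ admits one of two rigid structures: either (a) some $h$-subset of $V(P)$ is jointly adjacent to every other vertex of $V(P)$ (the $K_h$ playing the role of the dominator in $S_{n,h}$), or (b) $V(P)$ splits as $\{c\}\cup B_1\cup B_2$ where $B_1$ and $B_2$ are cliques of order $h$ and $c$ is their unique common neighbor, the seed of $L_{t,h}$ with $c$ as center.

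Finally I would absorb the vertices of $V(G)\setminus V(P)$, which exist since $n\ge 2h+2>m$. Any such $u$ has $d(u)\ge h$ and, to avoid producing a $P_{2h+2}$, must attach to $V(P)$ along the no-crossing constraint already derived. In case (a) this forces $N(u)$ into the dominating $h$-set, giving $G\subseteq K_h\vee\overline{K}_{n-h}=S_{n,h}$. In case (b), the minimum-degree hypothesis forces $u$ to join one existing clique-block through $c$ and, together with further iteration, to fill out a new $K_h$ hanging off $c$; combining these, the whole graph becomes $K_1\vee tK_h=L_{t,h}$ with $n=th+1$.

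The main obstacle, I expect, is the rotation-closure step: extracting the dichotomy between a uniform $K_h$-dominator and a cut vertex of clique-blocks from the fact that every rotation endpoint has degree exactly $h$ and obeys the no-crossing constraint. Once the internal structure on $V(P)$ is pinned down, absorbing the external vertices is forced cleanly by the minimum-degree hypothesis and $P_{2h+2}$-freeness.
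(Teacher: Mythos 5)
Your opening is sound: by maximality of $P=v_1\cdots v_m$ both endpoints have all neighbours on $P$, the no-crossing observation is correct, and since $N(v_m)$ and $\{v_{i-1}:v_i\in N(v_1)\}$ are disjoint subsets of $\{v_1,\dots,v_{m-1}\}$ you do get $m=2h+1$ and $d(v_1)=d(v_m)=h$. But the proposal stops exactly where the theorem begins. The entire content of the statement is the structural dichotomy you announce in the rotation step --- that the closure of the endpoint set under P\'osa rotations forces $V(P)$ to be either an $h$-set dominating the rest (the $S_{n,h}$ seed) or two $(h+1)$-cliques glued at a single vertex (the $L_{t,h}$ seed) --- and for this you offer nothing beyond ``iterating this, I would show,'' while yourself naming it the main obstacle. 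Alternative (a) is also weaker than what you need: to conclude $G\subseteq S_{n,h}$ you must show not merely that some $h$-subset of $V(P)$ dominates $V(P)$, but that the remaining $h+1$ vertices of $V(P)$, and all external vertices, form an independent set. The absorption step in case (b) is likewise asserted rather than argued: one must exclude, for instance, an external vertex of degree $h$ whose neighbourhood meets both clique blocks other than at $c$, and one must show the external vertices organise themselves into complete new blocks of size exactly $h$ so that $n=th+1$. As written, this is a correct first lemma followed by a plausible programme, not a proof.

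For comparison, the paper does not reprove the Ali--Staton theorem by rotations at all; the statement is recovered inside the proof of its Theorem 3.1 by a decomposition into connectivity cases. If $G$ is $2$-connected, Dirac's cycle lemma gives a longest cycle of length at least $2h$; $P_{2h+2}$-freeness together with $n\ge 2h+2$ caps the length at exactly $2h$ and makes the complement of the cycle independent, and a short structure lemma (Lemma 2.4(i) in the paper) then yields $G\subseteq S_{n,h}$ directly. If $G$ has a cut vertex, the Erd\H{o}s--Gallai lemma on long paths from a prescribed endpoint of a $2$-connected graph forces every end block to have order exactly $h+1$ and forces all blocks to share one common cut vertex, which is precisely $G=L_{t,h}$ with $n=th+1$. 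That route obtains the $S_{n,h}$ versus $L_{t,h}$ dichotomy for free from the block structure --- exactly the piece your plan leaves open --- so if you wish to persist with the rotation approach, expect the case analysis of the rotation closure to be the bulk of the work rather than a routine iteration.
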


\underline{}Recently, Yuan and Nikiforov \cite{NY} gave a stability version of  Theorem~\ref{Thm1.3}.
\begin{Theorem}\cite{NY}
Let $G$ be a connected graph of order $n$, where $n\geq 2h+3$. If $\delta (G)\geq h\geq2$, then
$P_{2h+3}\subseteq G$, unless unless one of following holds:\\
(i). $G\subseteq S_{n,h}^+$;\\
(ii). $G=L_{t,h}$, where $n= th+1$;\\
(iii). $G \subseteq  L_{t,h,1,h+1}$, where $n =(t+1)h+2 $;\\
(iv). $G$ is obtained by joining the centers of two disjoint graphs $L_{s,h}$ and $L_{t,h}$,
 where $n = (s +t)h+2$.
\end{Theorem}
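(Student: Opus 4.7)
The plan is to mimic the Ali--Staton strategy for $P_{2h+2}$ and then close the gap to $P_{2h+3}$ by a rotation--extension argument. First, if $\delta(G)\geq h+1$ then Theorem~1.3 already gives $P_{2h+3}\subseteq G$, so I may assume $\delta(G)=h$. Applying Theorem~1.4 to $G$: if $G\subseteq S_{n,h}$ then (since $S_{n,h}\subseteq S_{n,h}^+$) we land in conclusion (i), while $G=L_{t,h}$ with $n=th+1$ is exactly conclusion (ii). I am left with the case that $G$ contains some $P_{2h+2}$ but no $P_{2h+3}$.

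Fix a longest path $P=v_1v_2\cdots v_p$, so necessarily $p=2h+2$, and let $R:=V(G)\setminus V(P)$, which is nonempty because $n\geq 2h+3$. Since $P$ is longest, neither $v_1$ nor $v_p$ has a neighbor in $R$, and each has at least $h$ neighbors on $V(P)$. The key step is the standard Chv\'atal--Erd\H{o}s ``crossing'' observation: if some index $j\in\{1,\ldots,p-1\}$ satisfies both $v_1v_{j+1}\in E(G)$ and $v_pv_j\in E(G)$, then
\[
C \;:=\; v_1v_2\cdots v_j\,v_pv_{p-1}\cdots v_{j+1}\,v_1
\]
is a Hamiltonian cycle on $V(P)$. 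Connectivity then yields an edge from $V(C)$ to some $u\in R$; opening $C$ at the appropriate spot and appending $u$ produces a $P_{2h+3}$, a contradiction. Hence the index sets $S_1:=\{j:v_1v_{j+1}\in E(G)\}$ and $S_p:=\{j:v_pv_j\in E(G)\}$ are disjoint subsets of $\{1,\ldots,p-1\}$ of size at least $h$, and each therefore has size $h$ or $h+1$.

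To upgrade this constraint into a structural classification, I will iterate the rotation argument: every path obtained from $P$ by pivoting along an edge $v_1v_i$ or $v_pv_j$ is again a longest path, whose new endpoint must satisfy the same no-neighbor-in-$R$ property and the same crossing prohibition. Collecting all such ``rotation endpoints'' and intersecting the disjointness constraints forces $V(P)$ to split into one or two clique-like end clusters separated by one or two distinguished ``central'' vertices, and it simultaneously forces every $u\in R$ to attach only at those central vertices, with $|N(u)|\geq h$ inside them. Depending on whether $G$ is $2$-connected with a common central $K_h$ joined to everything, has a single cut vertex whose removal produces components that are cliques of order $h$ with possibly one of order $h+1$, or has a cut edge joining two such centers, one obtains respectively $G\subseteq S_{n,h}^+$ (case (i)), $G\subseteq L_{t,h,1,h+1}$ (case (iii)), or the graph obtained by joining the centers of $L_{s,h}$ and $L_{t,h}$ (case (iv)).

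The main obstacle lies in the last paragraph: converting the rotation-derived neighborhood data into a rigid block decomposition without leaving room for near-misses. One must argue that each end block is a clique of order exactly $h+1$, with at most one block of order $h+2$ realised only in case (iii); that at most two ``central'' vertices exist, either coinciding or adjacent; and that no small-$n$ pathologies slip through. Carefully tracking whether $|S_1|$ and $|S_p|$ equal $h$ or $h+1$, and how the slack interacts with the connectivity structure at the cut vertex (or cut edge), should close the gap, after which matching the resulting graph to the listed families is routine.
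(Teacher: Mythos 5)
This statement is quoted from Nikiforov and Yuan \cite{NY} and the paper gives no proof of it, so there is no in-paper argument to compare against; I can only assess your proposal on its own terms. Your opening reductions are fine: invoking Theorem~\ref{Thm1.3} to dispose of $\delta(G)\geq h+1$, invoking the Ali--Staton theorem to dispose of the $P_{2h+2}$-free case (landing in (i) and (ii)), and the crossing observation that $v_1v_{j+1},v_pv_j\in E(G)$ would close $V(P)$ into a spanning cycle which, by connectivity and $R\neq\varnothing$, extends to a $P_{2h+3}$. The conclusion that $S_1$ and $S_p$ are disjoint subsets of $\{1,\ldots,2h+1\}$ each of size $h$ or $h+1$ is correct. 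All of this is standard P\'osa-type setup.

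The genuine gap is everything after that. The entire content of the theorem is the derivation of the four exceptional families, and your last paragraph replaces that derivation with an assertion that iterated rotations ``force'' $V(P)$ to split into clique-like end clusters around one or two central vertices, followed by an explicit admission that the argument ``should close the gap.'' Nothing in what you wrote shows that the components hanging off a cut vertex are complete graphs of order exactly $h$ or $h+1$, that at most one component of order $h+1$ can occur (case (iii)), that the two-center configuration must be realized by a single cut edge joining the centers of two $L_{s,h}$-type graphs (case (iv)), or that no other $P_{2h+3}$-free configuration with $\delta=h$ survives; the analogous classifications proved elsewhere in this paper (e.g.\ the cut-vertex analysis in the proof of Theorem~\ref{Thm4-rep} via end blocks and Lemma~\ref{Lem7}, and the 2-connected analysis via longest cycles and Lemma~\ref{Lemma18}) each require a substantial case analysis, and related theorems here pick up additional extremal graphs such as $K_2\bigvee\frac{n-2}{2}K_2$ that a loose rotation argument would not obviously exclude. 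As it stands the proposal proves only the easy disjointness constraint on $S_1$ and $S_p$ and leaves the classification, which is the theorem, unestablished.
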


    Bushaw and Kettle \cite{BK} proved the maximum number of edges in a $kP_l$-free graph  of  sufficiently large order $n$ for $l\geq3$ and determined all the extremal graphs.
Recently,    Yuan and Zhang \cite{Yuan} extended their result for  $l=3$ and all possible  $n$ and $k$.
Lidick\'{y}, Liu, and Palmer \cite{LLP} extended  Bushaw-Kettle result for linear forests. Their main result can be stated as follows.



\begin{Theorem}\label{Th1.6}\cite{LLP}
 Let  $F=(\bigcup_{i=1}^k P_{2a_i})\bigcup (\bigcup_{i=1}^lP_{2b_i+1})$  and
 $ h=\sum \limits_{i=1}^k a_i+\sum \limits_{i=1}^l b_i-1$, where   $k+l\geq2$,
$a_1\geq \cdots \geq a_{k}\geq1$, and  $b_i\geq1$ for $1\leq i\leq l$. Let $G$ be  an $F$-free  graph  of  sufficiently large order $n$.\\
(i). If $k\geq1$, then $e(G)\leq e(S_{n,h})$ with equality if and only if $G=S_{n,h}$;\\
(ii). If $k=0$ and there is at least one $b_i$ such that $b_i>1$, where $1\leq i \leq l$,  then $e(G)\leq e(S^+_{n,h})$ with equality if and only if $G=S^+_{n,h}$.\\
\end{Theorem}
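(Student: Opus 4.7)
The plan is to prove Theorem~\ref{Th1.6} in three steps: verifying $F$-freeness of the claimed extremal graphs, proving a matching upper bound via an embedding argument, and establishing the uniqueness statement.

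For $F$-freeness, a direct counting argument suffices. In $S_{n,h}$ the side $\overline{K}_{n-h}$ is independent, so along any path the vertices lying outside $K_h$ are pairwise non-adjacent; hence an embedding of $P_{2a}$ uses at least $a$ vertices of $K_h$, and an embedding of $P_{2b+1}$ uses at least $b$. Summing over the components of $F$ demands at least $\sum a_i+\sum b_i=h+1$ vertices of $K_h$, exceeding the $h$ available. For case (ii), the single extra edge of $S^+_{n,h}$ can save exactly one $K_h$-vertex, but only when it is used inside an even component; since $k=0$, no saving is possible and the obstruction $h+1>h$ remains.

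For the upper bound, assume $G$ is $F$-free of order $n$ with $e(G)$ above the target. I would first reduce to a dense subgraph by iteratively deleting vertices of degree at most $h$: each deletion removes at most $h$ edges, and since $e(S_{n,h})-e(S_{n-1,h})=h$ the invariant $e(G_i)>e(S_{n-i,h})$ is preserved. The process must terminate (once $|V(G_i)|$ drops close to $h$ the inequality becomes numerically impossible) at a subgraph $G^{\ast}$ of order $n^{\ast}$ with $\delta(G^{\ast})\ge h+1$; for $n$ large one has $n^{\ast}\ge 2h+3$, so Theorem~\ref{Thm1.3} applied to a connected component of $G^{\ast}$ yields a path of order at least $2h+3$---enough to accommodate any single component of $F$. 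One then embeds the components of $F$ one at a time, at each step deleting the vertices just used and iterating on the residual graph, which remains of order close to $n$.

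The main technical obstacle is making this iterative embedding go through: after embedding one component the minimum degree of the residual graph can drop by as much as the length of that component, potentially falling below the threshold needed to iterate. To overcome this, one needs either a stronger absorption argument---showing that the dense core $G^{\ast}$ simultaneously contains many vertex-disjoint long paths---or a quantitative strengthening of Theorem~\ref{Thm1.3} guaranteeing several disjoint long paths. The second major obstacle is uniqueness in the equality case: if $e(G)$ exactly equals the target, the reduction above is tight, and one must argue via stability that any deviation from $S_{n,h}$ (a missing edge from the candidate $K_h$-set to the rest, or an extra edge outside that set) creates the flexibility needed to embed $F$. In case (ii), the hypothesis that some $b_i>1$ is essential: when every $b_i=1$ the forest $F$ degenerates to a matching and a different extremal structure takes over.
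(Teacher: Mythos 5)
First, a point of reference: the paper does not prove this statement at all---it is Theorem~\ref{Th1.6}, quoted verbatim from Lidick\'{y}, Liu, and Palmer \cite{LLP} and used as a black box (only Corollary~\ref{Cor} is derived from it). So there is no in-paper proof to compare against; your proposal has to stand on its own, and as written it does not. Your verification that $S_{n,h}$ and $S^+_{n,h}$ are $F$-free is correct and complete: counting the minimum number of $K_h$-vertices forced on each component ($a_i$ for $P_{2a_i}$, $b_i$ for $P_{2b_i+1}$, with the extra edge of $S^+_{n,h}$ saving one vertex only in an even component) is exactly the right argument. But this is the easy direction.

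The upper bound is where the gaps are. (1) Your degeneracy reduction correctly preserves $e(G_i)>e(S_{n-i,h})$ and terminates at a core $G^{\ast}$ with $\delta(G^{\ast})\ge h+1$, but the claim that $n^{\ast}\ge 2h+3$ for large $n$ does not follow: the numerics only force $n^{\ast}\ge h+2$, and a graph with $e(G)$ just above $e(S_{n,h})$ can reduce all the way down to a clique on roughly $h+2$ vertices (compare $S^+_{n,h}$ itself, which strips down to $K_{h+2}$), so the subsequent appeal to Theorem~\ref{Thm1.3} is unavailable in general. (2) Even granting a long path in the core, the step you flag as "the main technical obstacle"---embedding the $k+l$ components one at a time while the residual minimum degree collapses---is precisely the heart of the theorem, and you offer no mechanism for it; note also that $l$ is unbounded here, so $F$ can have far more than $2h+3$ vertices and a single $P_{2h+3}$ is nowhere near enough. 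The actual proof in \cite{LLP} avoids iteration entirely: it uses the common-neighborhood lemma (reproduced in this paper as Lemma~\ref{Lem5}) to extract $h$ vertices with a large common neighborhood for $n$ sufficiently large, and then embeds all components of $F$ simultaneously and greedily through that bipartite structure, with the one extra edge handled separately in case (ii); uniqueness falls out of the same analysis. Without some substitute for that device, and without any concrete argument for the equality case, the proposal is a plan with its two decisive steps left open rather than a proof.
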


It is easy to see that we have the following result from Theorem~\ref{Th1.6}.
\begin{Corollary}\label{Cor}
 Let $F=(\bigcup_{i=1}^ k P_{2a_i})\bigcup (\bigcup_{ i=1}^ lP_{2b_i+1})$ and
 $ h=\sum \limits_{i=1}^k a_i+\sum \limits_{i=1}^l b_i-1$, where    $k+l\geq2$,
$a_1\geq \cdots \geq a_{k}\geq1$, and $b_i\geq1$  for $1\leq i\leq l$.  Let $G$ be a  graph  of sufficiently large order $n$.\\
 (i). If $k\geq1$ and $\delta>2h-\frac{h^2+h}{n}$, then $F\subseteq G$;\\
 (ii). If $k=0$,  $\delta>2h-\frac{h^2+h-4}{n}$, and  there is at least one $b_i$ such that $b_i>1$, where $1\leq i\leq l$,   then $F\subseteq G$.
\end{Corollary}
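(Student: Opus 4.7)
The plan is to deduce the corollary directly from Theorem~\ref{Th1.6} by converting the minimum-degree hypothesis into an edge-count hypothesis; the only auxiliary tool I would need is the trivial inequality
\[
e(G)\ \geq\ \frac{n\,\delta(G)}{2},
\]
obtained by summing vertex degrees.

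First I would record the edge counts of the two extremal graphs. Since $S_{n,h}=K_{h}\bigvee\overline{K}_{n-h}$ contributes $\binom{h}{2}$ edges inside $K_{h}$ and $h(n-h)$ crossing edges, while $S^{+}_{n,h}$ adds one further edge, we have
\[
e(S_{n,h}) \;=\; \binom{h}{2}+h(n-h) \;=\; hn-\frac{h^{2}+h}{2},\qquad e(S^{+}_{n,h}) \;=\; e(S_{n,h})+1.
\]
For part~(i), I would assume $G$ is $F$-free and derive a contradiction. The hypothesis $\delta(G)>2h-(h^{2}+h)/n$ multiplied by $n/2$ immediately gives $e(G)\geq n\delta(G)/2>hn-(h^{2}+h)/2=e(S_{n,h})$, contradicting Theorem~\ref{Th1.6}(i). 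For part~(ii), I would run the same argument, this time using $\delta(G)>2h-(h^{2}+h-4)/n$ to obtain
\[
e(G)\ \geq\ \frac{n\,\delta(G)}{2}\ >\ hn-\frac{h^{2}+h}{2}+2\ =\ e(S^{+}_{n,h})+1\ >\ e(S^{+}_{n,h}),
\]
which contradicts Theorem~\ref{Th1.6}(ii). The extra ``$+4$'' appearing inside the fraction in part~(ii) is precisely tuned to provide the additional slack of $+2$ needed to clear the one extra edge that $S^{+}_{n,h}$ carries over $S_{n,h}$.

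There is no substantive obstacle: every difficult extremal input is already packaged inside Theorem~\ref{Th1.6}, so the proof is entirely arithmetic bookkeeping. The only minor subtlety worth checking is that $n\delta(G)$ may be odd, but since the hypotheses are strict inequalities any such rounding issue is absorbed automatically, and the conclusion follows without further modification.
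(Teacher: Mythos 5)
Your proposal is correct and matches the paper's intent exactly: the paper gives no explicit proof, stating only that the corollary follows easily from Theorem~\ref{Th1.6}, and the deduction it has in mind is precisely your handshake-lemma conversion $e(G)\geq n\delta(G)/2$ followed by a comparison with $e(S_{n,h})=hn-\frac{h^2+h}{2}$ and $e(S^{+}_{n,h})=e(S_{n,h})+1$. Your arithmetic checks out (in part~(ii) the ``$+4$'' in fact gives one more unit of slack than strictly needed, but that only makes the stated hypothesis slightly stronger, not the proof wrong).
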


The related results can be referred to \cite{CZ2018} and references therein.

\subsection{Our main results}

In this paper, we obtain  stability versions of Corollary~\ref{Cor} 
for a graph which does not contain  a  linear forest with at most two odd paths.
The main results in this paper are Theorems~\ref{Thm1}-\ref{Thm2}.


\begin{Theorem}\label{Thm1}
Let $F=\bigcup_{i=1}^k P_{2a_i}$ and $ h=\sum _{i=1}^k a_i-1\geq1$, where  $k\geq2$ and
$a_1\geq \cdots \geq a_{k}\geq1$. Let $G$ be a  connected graph of order $n$, where $n\geq2h+2$. If  $\delta(G)\geq h$,
 then $F\subseteq G$, unless one of the following holds:\\
 (i). $G\subseteq S_{n,h}$;\\
  (ii). $F=2P_{2a_1}$ and $G=L_{t,h}$, where $n=th+1$.
\end{Theorem}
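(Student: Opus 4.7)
The plan is to reduce to the Ali--Staton theorem. Since the $2h+1$ edges of a path on $2h+2$ vertices can be cut by removing $k-1$ of them to produce subpaths of any prescribed even orders summing to $2h+2$, we have $F\subseteq P_{2h+2}$. Hence, if $P_{2h+2}\subseteq G$ then $F\subseteq G$ and we are done. Otherwise, since $G$ is connected, $\delta(G)\ge h$, and $n\ge 2h+2$, the Ali--Staton theorem yields $G\subseteq S_{n,h}$, which is exactly conclusion (i), or $G=L_{t,h}$ with $n=th+1$. The inequality $n\ge 2h+2$ then forces $t\ge 3$.

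It remains to decide, when $G=L_{t,h}$, whether $F\subseteq G$. Structurally $L_{t,h}$ is a central vertex $c$ joined to $t$ pairwise disjoint copies of $K_h$, which I will call \emph{blobs}. Since $L_{t,h}-c=tK_h$ is disconnected, every path in $L_{t,h}$ either lies inside a single blob (and so has order at most $h$) or uses $c$ as an internal vertex with its remaining vertices split across exactly two distinct blobs. If $F=2P_{2a_1}$ then $2a_1=h+1$; at most one of the two required copies of $P_{h+1}$ can pass through $c$, and the other would have to fit inside a single blob of order $h$, which is impossible. This gives conclusion (ii).

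In every other case I claim $F\subseteq L_{t,h}$. The key arithmetic point is that $2a_i\le h$ for all $i\ge 2$: for $k=2$ with $a_1>a_2$ one has $2a_2\le a_1+a_2-1=h$, and for $k\ge 3$ the bound $a_1+a_2\le h+3-k\le h$ combined with $a_2\le a_1$ gives the same inequality. Consequently each $P_{2a_i}$ with $i\ge 2$ fits inside one blob. I would embed $P_{2a_1}$ first: entirely in one blob when $2a_1\le h$, and otherwise as an $(a_1-1)$-segment of a blob, then $c$, then an $a_1$-segment of another blob (legitimate because $a_1\le h$). The remaining $\sum_{i\ge 2}2a_i=2h+2-2a_1$ vertices are then to be placed inside blobs, and the available capacity $(t-2)h+(2h-2a_1+1)=th-2a_1+1$ exceeds the demand by $(t-2)h-1\ge 0$. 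A first-fit placement of $P_{2a_2},P_{2a_3},\dots$ in decreasing order into the remaining blobs---of which at least one is fully untouched since $t\ge 3$---then succeeds. The only delicate step is to verify that this greedy packing never gets stuck when several $a_i$ are close to $h/2$; because each $2a_i\le h$ and at least one fully untouched blob is available, this reduces to a short case check.
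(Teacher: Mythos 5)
Your reduction to the Ali--Staton theorem is sound and close in spirit to the paper's argument (the paper re-derives the $S_{n,h}$/$L_{t,h}$ dichotomy from its preliminary lemmas rather than citing Ali--Staton, but that is immaterial), and your analysis of why $2P_{2a_1}$ with $2a_1=h+1$ cannot embed in $L_{t,h}$ is correct. The gap is in the final packing step. The claim that a first-fit placement of $P_{2a_2},\dots,P_{2a_k}$ into the blobs ``then succeeds'' is false as stated: take $F=4P_2$, so $h=3$, and $G=L_{3,3}$ (here $n=10\ge 2h+2$ and $t=3$). Your scheme places $P_{2a_1}=P_2$ inside one copy of $K_3$, leaving residues of sizes $1,3,3$; each remaining $P_2$ needs two vertices in a single blob, so only two of the three remaining copies can be placed and the greedy packing is stuck, even though total capacity exceeds total demand. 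Of course $4P_2\subseteq L_{3,3}$ --- one of the four edges must use the center $c$ --- but that is precisely the move your framework forbids, since you reserve $c$ for $P_{2a_1}$ and only when $2a_1>h$. So the ``short case check'' you defer is not a formality: the packing genuinely fails in at least one instance, and the repair requires routing a second path through the center.

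The paper sidesteps all of this with one observation: for $t\ge 3$ the graph $L_{t,h}$ contains $P_h\bigcup P_{2h+1}$ (the $P_{2h+1}$ runs through the center across two full blobs, the $P_h$ sits in a third), and unless $k=2$ and $a_1=a_2$ one has $2a_k\le h$ and $\sum_{i=1}^{k-1}2a_i\le 2h+1$, so $F$ embeds by putting $P_{2a_k}$ into the $P_h$ and cutting the $P_{2h+1}$ into the remaining pieces. This replaces your multi-bin packing by a two-bin argument in which one bin is a single long path, so no case analysis is needed. If you wish to keep the blob-packing viewpoint, you must treat the center vertex as usable capacity for paths other than $P_{2a_1}$ and redo the verification; as written the proof has a hole.
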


{\bf Remark 1.} The minimum degree condition  is best possible.  For example,
  let   $F=2P_{2a_1}$ and  $h=2a_1-1$.  If $G= L_{\frac{n-1}{h-1},h-1}$,  where $n=h(h-1)q+1$ with positive integer $q$,
then  $\delta(G)=h-1$, $F\nsubseteq G$, $G\nsubseteq S_{n,h}$,
and $G\ncong L_{\frac{n-1}{h},h}$. So Theorem~\ref{Thm1} does not hold for $\delta(G)=h-1$.
For another example, let    $F=P_2\bigcup 2P_4$ and $h=4$.  If $G=L_{\frac{n-1}3,3}$£¬ where $n=12q+1$ with positive integer $q$,  then $\delta(G)=3$, $F\nsubseteq G$, and $G\nsubseteq S_{n,4}$.
 So Theorem~\ref{Thm1} does not hold for  $\delta(G)=h-1$.
Therefore the condition $\delta(G)\geq h$  in Theorem~\ref{Thm1} can not be weaken and  is best possible.


\begin{Theorem}\label{Thm4}
Let $F=(\bigcup_{i=1}^k P_{2a_i})\bigcup P_{2b_1+1}$ and  $ h=\sum _{i=1}^k a_i+b_1-1\geq1$, where  $k\geq1$,
$a_1\geq \cdots \geq a_{k}\geq1$, and $b_1\geq1$. Let $G$ be a connected graph of order $n$, where $n\geq2h+3$. If  $\delta(G)\geq h$,
then $F\subseteq G$, unless one of the following holds:\\
 (i). $G\subseteq S_{n,h}$;\\
 (ii).  $F=P_6\bigcup P_3$ and $G\subseteq K_2\bigvee \frac{n-2}{2}K_2$, where $n$ is even;\\
  (iii). $F\in \{P_{2b_1}\bigcup P_{2b_1+1}, P_{2b_1+2}\bigcup P_{2b_1+1}\}$ and $G=L_{t,h}$, where  $n=th+1$.

\end{Theorem}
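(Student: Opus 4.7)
The plan is to reduce to the even-path Theorem~\ref{Thm1} and then promote the even surrogate to $F$ by a rotation argument. Set $F' = (\bigcup_{i=1}^{k} P_{2a_i}) \cup P_{2b_1}$; this is a linear forest of $k+1\ge 2$ even components with parameter $\sum a_i + b_1 - 1 = h$ and order $2h+2\le n$. Applying Theorem~\ref{Thm1} to $G$ and $F'$ leaves three possibilities: either $F'\subseteq G$, or $G\subseteq S_{n,h}$, or $F' = 2P_{2a_1}$ and $G = L_{t,h}$ with $n = th+1$. The middle case is exception (i). The last case forces $k=1$ and $a_1 = b_1$, so $F = P_{2b_1}\cup P_{2b_1+1}$; a direct pigeonhole on the pendant cliques of $L_{t,h}$ (each has only $h+1 = 2b_1$ vertices, while $P_{2b_1+1}$ requires the center) shows $F\not\subseteq L_{t,h}$, placing us in exception (iii).

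In the remaining case $F'\subseteq G$, I would fix an embedding with even components $Q_1,\ldots,Q_k$ and odd surrogate $P = p_1p_2\cdots p_{2b_1}$ chosen to maximise the length of $P$ over all valid embeddings of $F'$ in $G$. Let $R = V(G)\setminus V(F')$, so $|R|\ge 1$. If $p_1$ or $p_{2b_1}$ is adjacent to some vertex of $R$ we extend $P$ to $P_{2b_1+1}$ and obtain $F\subseteq G$ immediately. Otherwise $N(p_1)\cup N(p_{2b_1})\subseteq V(F')$, and with $\delta(G)\ge h$ I would run P\'osa-style rotations on $P$ combined with vertex swaps between $P$ and each $Q_i$: a neighbour of $p_1$ at an interior vertex of some $Q_i$ allows us to shorten $Q_i$ at one end (moving the freed vertex to $R$) while absorbing a vertex of $R$ into $P$. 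Iterating these moves either lengthens $P$, completing the proof, or freezes $G$ into a very constrained block structure.

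Most of the work, and the main obstacle, is classifying the obstructions when no such extension exists. The rotation constraints force every vertex of $R$ to attach to $F'$ only through a small set of hubs, and a careful analysis of which hubs can occur yields precisely three structures. Either $p_1$ and $p_{2b_1}$ together with at most $h-2$ other vertices form a dominating set for $R$, putting $G\subseteq S_{n,h}$ (exception (i)); or $G$ decomposes into pendant cliques sharing a single cut vertex, so $G = L_{t,h}$ with $n = th+1$, and comparing $|V(F)| = 2h+3$ to the pendant size $h+1$ leaves exactly $F\in\{P_{2b_1}\cup P_{2b_1+1},\ P_{2b_1+2}\cup P_{2b_1+1}\}$ as the cases where $F\not\subseteq L_{t,h}$ (exception (iii)); or the sporadic case $h=3$, $F = P_6\cup P_3$ appears as $G\subseteq K_2\vee \frac{n-2}{2}K_2$ (exception (ii)), which is ruled out by verifying directly that $P_6\cup P_3$ cannot be placed in that graph since both hubs would have to be swallowed by the $P_6$. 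The analysis parallels the Yuan--Nikiforov treatment of a single odd path, but must be carried out while keeping the even part $\bigcup P_{2a_i}$ of $F$ intact throughout the rotations, which is where the sporadic exception (ii) with two hubs squeezes through.
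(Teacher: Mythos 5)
Your opening reduction is sound: applying Theorem~\ref{Thm1} to the even surrogate $F'=(\bigcup_{i=1}^{k}P_{2a_i})\bigcup P_{2b_1}$ correctly disposes of the cases $G\subseteq S_{n,h}$ and ($k=1$, $a_1=b_1$, $G=L_{t,h}$), and your endgame verifications that $F\nsubseteq S_{n,h}$, $F\nsubseteq L_{t,h}$ for the two exceptional forests, and $P_6\bigcup P_3\nsubseteq K_2\bigvee \frac{n-2}{2}K_2$ are all correct. But the heart of the theorem is the case $F'\subseteq G$, $F\nsubseteq G$, and there your argument is an announcement rather than a proof. The claims that ``the rotation constraints force every vertex of $R$ to attach to $F'$ only through a small set of hubs'' and that ``a careful analysis of which hubs can occur yields precisely three structures'' are exactly the statement to be proved; no mechanism is given that derives $G\subseteq S_{n,h}$, $G=L_{t,h}$, or $G\subseteq K_2\bigvee\frac{n-2}{2}K_2$ from a concrete frozen configuration. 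Worse, the machinery you invoke is not the standard one: P\'osa rotation applies to a single path spanning the relevant vertex set, whereas here $P$ is one component of a linear forest, and the ``vertex swaps between $P$ and each $Q_i$'' can destroy the components $Q_i$ you must preserve. You acknowledge this difficulty in your last sentence but do not resolve it, nor do you show the iteration terminates or that maximality of $P$ over all embeddings of $F'$ yields any usable neighborhood condition on $N(p_1)\cup N(p_{2b_1})$. Note also that $G=L_{t,h}$ with $F=P_{2b_1+2}\bigcup P_{2b_1+1}$ satisfies $F'\subseteq G$, so this exceptional pair must fall out of your frozen-structure analysis, and ruling out every other $F$ on $L_{t,h}$ requires a genuine case split on $\sum a_i$ versus $b_1$ (the paper needs three auxiliary disjoint-path facts about $L_{t,h}$ for this), not just a count of $|V(F)|$ against the pendant size.

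For comparison, the paper does not reduce to Theorem~\ref{Thm1} at all. It splits on whether $G$ is $2$-connected. In the $2$-connected case it takes a longest cycle $C_l$, pins $2h\le l\le 2h+1$ via Dirac's theorem and $P_{2h+3}\nsubseteq G$, and then classifies the attachments of $V(G)\setminus V(C_l)$ using Lemmas~\ref{Lemma18} and~\ref{Lemma19}; the sporadic exception (ii) emerges from the inequality $3(h-1)\le 2h$ forcing $h=3$ when $G[U]$ contains an edge. In the cut-vertex case it shows every end block has order exactly $h+1$ and that all blocks share one cut vertex, giving $L_{t,h}$. If you want to salvage your route, you would need to either carry out the rotation analysis in full (essentially redoing the Yuan--Nikiforov argument relative to a fixed even forest, which is where all the difficulty sits) or replace it with the longest-cycle analysis; as written, the proposal has a genuine gap at its central step.
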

{\bf Remark 2}. The minimum degree condition  is best possible. For example,
 let  $F=P_{2b_1}\bigcup P_{2b_1+1}$ and $h=2b_1$. If $G=L_{\frac{n-1}{h-1},h-1}$,  where  $n=h(h-1)q+1$ with positive integer $q$, then $\delta(G)=h-1$, $F\nsubseteq G$, $G\nsubseteq S_{n,h}$,
and $G\ncong L_{\frac{n-1}{h},h}$. Thus Theorem~\ref{Thm4} does not hold for $\delta(G)=h-1$. For another example,  let
 $F=P_4\bigcup P_2\bigcup P_3$ and $h=3$. If  $G=L_{\frac{n-1}{2},2}$, where $n=6q+1$ with positive integer $q$, then
 $\delta(G)=2$, $F\nsubseteq G$, and $G\nsubseteq S_{n,3}$. 
 Thus Theorem~\ref{Thm4} does not hold for $\delta(G)=h-1$. Therefore the condition $\delta(G)\geq h$  in Theorem~\ref{Thm4}  is  necessary and best possible.


\begin{Theorem}\label{Thm5}
Let $F=(\bigcup_{i=1}^k P_{2a_i})\bigcup (\bigcup_{i=1}^2 P_{2b_i+1})$, $ h=\sum _{i=1}^{k}a_i+\sum _{i=1}^2 b_i-1\geq2$,
and $G$ be a 2-connected graph of order $n$,
where $k\geq0$, $a_1\geq \dots \geq a_{k}\geq1$, $b_1\geq b_2$, and $n\geq4(2h+1)^2\binom{2h+1}{h}$.

(a). If  $\delta(G)\geq h$ and $k=0$, then $F\subseteq G$, unless one of the following holds:\\
 (i). $G\subseteq S^+_{n,h}$;\\
  (ii). $F=P_7\bigcup P_3$  and $G\subseteq K_2\bigvee \frac{n-2}{2}K_2$, where $n$ is even; \\
   (iii).    $F=P_9\bigcup P_3$ and $G\subseteq K_3\bigvee \frac{n-3}{2}K_2$, where $n$ is odd.

(b). If $\delta(G)\geq h$ and  $k\geq1$, then $F\subseteq G$, unless one of the following holds:\\
(iv). $G\subseteq S_{n,h}$;\\
  (v). $F= P_4\bigcup 2P_3$ and $G\subseteq K_2\bigvee \frac{n-2}{2}K_2$, where $n$ is even;\\
   (vi). $F=P_6\bigcup 2P_3$ and $G\subseteq K_3\bigvee \frac{n-3}{2}K_2$, where  $n$ is odd.
\end{Theorem}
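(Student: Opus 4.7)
\noindent The plan is to use Theorem~\ref{Thm4} (the case of at most one odd path) and the Yuan--Nikiforov stability theorem \cite{NY} for $P_{2h+3}$ as black boxes, reducing each part of Theorem~\ref{Thm5} to producing one additional disjoint odd path by exploiting $2$-connectivity and the very large lower bound on $n$.

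For Part~(b), where $k\geq 1$, the strategy is to apply Theorem~\ref{Thm4} to $G$ with the smaller forest $F'=(\bigcup_{i=1}^k P_{2a_i})\bigcup P_{2b_1+1}$; since $\delta(G)\geq h=h(F')+b_2\geq h(F')$, either $F'\subseteq G$, or $G$ is one of the rigid exceptional graphs from Theorem~\ref{Thm4}. In the former subcase, we exploit $2$-connectivity to choose the embedding of $F'$ so that one of its endpoints retains many neighbors outside $V(F')$, and then grow a disjoint copy of $P_{2b_2+1}$ in the residual graph using the minimum-degree and $2$-connectivity conditions; the large bound on $n$ ensures enough room for this extension. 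If the naive embedding places all available endpoints in an unfavorable position, a standard switching argument---exchanging a high-$V(F')$-degree outside vertex with a low-$V(F')$-degree inside vertex---restores the needed freedom. In the exceptional subcase, the rigidity of $G$ allows a direct inspection, producing the listed exceptions~(iv)--(vi).

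For Part~(a), with $F=P_{2b_1+1}\bigcup P_{2b_2+1}$, Theorem~\ref{Thm4} does not apply, so we first invoke the Yuan--Nikiforov theorem to obtain either $P_{2h+3}\subseteq G$ or $G\subseteq S^+_{n,h}$ (the other exceptional graphs there contain a cut vertex and hence are not $2$-connected for large $n$). Writing $P=v_1v_2\cdots v_{2h+3}$, we note that $|V(F)|=2h+4=|V(P)|+1$, so exactly one external vertex must be attached to $P$ at positions of the correct parity in order to split $P$ into two disjoint odd subpaths of orders $2b_1+1$ and $2b_2+1$. The lower bound $n\geq 4(2h+1)^2\binom{2h+1}{h}$ is calibrated to enable a pigeonhole argument on the $P$-neighborhoods of the many external vertices, producing either a parity-compatible external attachment (yielding $F\subseteq G$) or else forcing $G$ into one of the rigid structures $S^+_{n,h}$, $K_2\bigvee \frac{n-2}{2}K_2$, or $K_3\bigvee \frac{n-3}{2}K_2$, which are the listed exceptions~(i)--(iii).

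The main obstacle is the parity matching in Part~(a): a path of order $2h+3$ has no slack for splitting into two disjoint odd subpaths summing to $2h+4$ vertices, so the proof depends critically on locating one well-placed external attachment and on verifying that the three listed graph families are the only obstructions. A similar but simpler extremal analysis yields exceptions~(v) and (vi) of Part~(b), which arise only for the small forests $P_4\bigcup 2P_3$ and $P_6\bigcup 2P_3$, where the residual odd path is forced into the rigid structures $K_2\bigvee \frac{n-2}{2}K_2$ and $K_3\bigvee \frac{n-3}{2}K_2$.
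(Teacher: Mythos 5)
Your overall architecture is genuinely different from the paper's (the paper never invokes Theorem~\ref{Thm4} or the Yuan--Nikiforov theorem here; it works throughout with a longest cycle $C_l$, first pinning $2h\le l\le 2h+1$ via Lemma~\ref{lem31} and then classifying how the vertices of $V(G)\setminus V(C_l)$ attach, using Lemmas~\ref{Lemma18}, \ref{Lemma19}, \ref{Lemma20} and the counting Lemma~\ref{Lem5}), but your plan has a concrete hole at the very first step of Part~(a). The dichotomy ``either $P_{2h+3}\subseteq G$ or $G\subseteq S^+_{n,h}$'' is false for exactly the graphs you must classify: take $F=P_7\bigcup P_3$, so $h=3$, and $G=K_2\bigvee \frac{n-2}{2}K_2$. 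This graph is $2$-connected with $\delta(G)=3=h$, its longest path has order $8=2h+2$ (any path is of the form $[\le 2$ pair-vertices$]\,v\,[\le2]\,v\,[\le2]$ with only two join vertices available), so $P_{2h+3}=P_9\nsubseteq G$; and it is not a subgraph of $S^+_{n,3}$, since deleting any three vertices from it still leaves many disjoint edges while $S^+_{n,3}$ minus its $K_3$ has only one edge. Hence exception~(ii) of the theorem lives in neither branch of your case split, and your argument as structured cannot produce it. (It also cannot be rescued by appealing to the other exceptional classes of the quoted Theorem~1.5, none of which contain $K_2\bigvee mK_2$.) The paper avoids this trap precisely because it never passes through a ``long path'' intermediate: in Corollary~\ref{cor31} the case $l=2h$ with an edge inside $U$ is shown by a distance count along the cycle to force $h=3$ and then the structure $K_2\bigvee\frac{n-2}{2}K_2$ emerges directly.

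The second gap is the extension step, which is where essentially all the work of the actual proof lies and which you leave at the level of intent. After embedding $F'$ (in Part~(b)) or $P_{2h+3}$ (in Part~(a)), the residual graph $G-V(F')$ has minimum degree that can be as low as $h-|V(F')|$, which is negative, so ``grow a disjoint copy of $P_{2b_2+1}$ in the residual graph using the minimum-degree condition'' does not go through, and the proposed ``standard switching argument'' is not specified in a way that one could check. The paper's substitute for this is a sequence of structural lemmas: Lemma~\ref{Lemma19} forces outside vertices joined by an edge or a $P_3$ to have identical cycle-neighborhoods, the spacing of those neighborhoods along $C_l$ yields inequalities like $3(h-1)\le 2h$ that confine the exceptional cases to $h\in\{3,4\}$, and Lemma~\ref{Lem5} (which is the sole reason for the hypothesis $n\ge 4(2h+1)^2\binom{2h+1}{h}$) extracts a complete bipartite $K_{h,h+2}$ that feeds Lemma~\ref{Lemma20} to finish the generic case. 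Your Part~(b) reduction also differs from the paper's: the paper absorbs all the even paths into the longer odd path (setting $b_1'=\sum_i a_i+b_1$, $b_2'=b_2$) and reuses the two-odd-paths lemmas, which is cleaner than re-running Theorem~\ref{Thm4} and then hunting for a disjoint $P_{2b_2+1}$. As it stands the proposal is a plausible outline with a false pivot in Part~(a) and no verifiable argument for the key extension step.
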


Let $H_n^1$ be a graph of order $n$, where $n\geq 7$, obtained from $S_{n,2}$ and $K_3$ by identifying a vertex with maximum degree of $S_{n,2}$ with
a vertex of $K_3$ (see Fig.3).
Let $H_n^2$ be a graph order $n$, where $n\geq 9$, obtained from $H_{n-2}^1$ and $K_3$ by identifying a vertex with the second largest degree of  $H_{n-2}^1$ with
a vertex of $K_3$ (see Fig.3).

Let $U_{3,h}$ be a graph of order $3h+3$ obtained from $3K_{h+1}$ and $K_3$ by identifying  every vertex of  $K_3$ with
a vertex of $K_{h+1}$, respectively (see Fig.3).

\vspace{4mm}
\begin{center}
\begin{tikzpicture}[scale=1.4]

\path (2.5,0)  coordinate (P1);\path (4,0)  coordinate (P2);
\path (3.25,0.4)  coordinate (P3);\path (3.25,0.75)  coordinate (P4);\path (3.25,-0.75)  coordinate (P5);
\path (3.25,-0.3)  coordinate (Q1);\path (3.25,-0.4)  coordinate (Q2);\path (3.25,-0.5)  coordinate (Q3);
\path (4.5,0.5)  coordinate (R1);\path (4.5,-0.5)  coordinate (R2);
\foreach \i in {1,2,3,4,5}
{\fill (P\i) circle (1pt);}
\foreach \i in {1,2,3}
{\fill (Q\i) circle (0.7pt);}
\fill (R1) circle (1pt);\fill (R2) circle (1pt);
\foreach \i in {1,2}
{
\foreach \j in {3,4,5}
{
\draw (P\i) -- (P\j);
}
}
\draw (P1) -- (P2);
\draw (P2) -- (R1)--(R2)--cycle;
\path (3.3,-1.3) node () {$H_n^1$};

\path (6,0)  coordinate (P1);\path (7.5,0)  coordinate (P2);
\path (6.75,0.4)  coordinate (P3);\path (6.75,0.75)  coordinate (P4);\path (6.75,-0.75)  coordinate (P5);
\path (6.75,-0.3)  coordinate (Q1);\path (6.75,-0.4)  coordinate (Q2);\path (6.75,-0.5)  coordinate (Q3);
\path (8,0.5)  coordinate (R1);\path (8,-0.5)  coordinate (R2);
\path (5.5,0.5)  coordinate (R3);\path (5.5,-0.5)  coordinate (R4);
\foreach \i in {1,2,3,4,5}
{\fill (P\i) circle (1pt);}
\foreach \i in {1,2,3}
{\fill (Q\i) circle (0.7pt);}
\foreach \i in {1,2,3,4}
{\fill (R\i) circle (1pt);}
\foreach \i in {1,2}
{
\foreach \j in {3,4,5}
{
\draw (P\i) -- (P\j);
}
}
\draw (P1) -- (P2);
\draw (P2) -- (R1)--(R2)--cycle;
\draw (P1) -- (R3)--(R4)--cycle;
\path (6.8,-1.3) node () {$H_n^2$};

\draw (9.1,-0.5) circle (0.3cm);
\draw (11.1,-0.5) circle (0.3cm);\draw (10.1,0.5) circle (0.3cm);
\path (9.4,-0.6)  coordinate (P1);\path (10.8,-0.6)  coordinate (P2); \path (10.1,0.2)  coordinate (P3);
\fill (P1) circle (1pt); \fill (P2) circle (1pt);\fill (P3) circle (1pt);
\draw (P1)--(P2)--(P3)--cycle;
\path (9.1,-0.5) node () {\scriptsize{$K_{h+1}$}};
\path (10.1,0.5) node () {\scriptsize{$K_{h+1}$}};\path (11.1,-0.5) node () {\scriptsize{$K_{h+1}$}};
\path (10.2,-1.3) node () {$U_{3,h}$};

\path (6.5,-2) node () {Fig.3. Graphs  $H_n^1$,  $H_n^2$, and $U_{3,h}$.};
\end{tikzpicture}
\end{center}

For $h\geq 2$, let $F_{t_1,t_2,h,h+1}$ be a graph  of order $t_1h+(t_2+1)(h+1)+1$
obtained from $L_{t_1,t_2,h,h+1}$ and  $K_{h+1}$ by adding an edge joining the center of $L_{t_1,t_2,h,h+1}$ and a vertex of $K_{h+1}$ (see Fig.4).
Moreover,  for $h\geq 2$, let $T_{t_1,t_2,h,h+1}$ be a graph of order $t_1h+(t_2+2)(h+1)+1$
obtained from $L_{t_1,t_2,h,h+1}$ and $2K_{h+1}$ by adding an edge joining the center of $L_{t_1,t_2,h,h+1}$ and a vertex of each $K_{h+1}$, respectively (see Fig.4).

\vspace{4mm}
\begin{center}
\begin{tikzpicture}[scale=1.4]
\path (0.7,0)  coordinate (R1);\path (0,-0.6)  coordinate (R2);\path (0,0.6)  coordinate (R3);
\path (2,0)  coordinate (P1);\path (1,0.3)  coordinate (P2);\path (1.5,1)  coordinate (P3);
\path (2.5,1)  coordinate (P4);\path (3,0.3)  coordinate (P5);
\path (1,-0.3)  coordinate (P6);\path (1.5,-1)  coordinate (P7);
\path (2.5,-1)  coordinate (P8);\path (3,-0.3)  coordinate (P9);
\path (1.8,0.7)  coordinate (Q1);\path (2,0.7)  coordinate (Q2);\path (2.2,0.7)  coordinate (Q3);
\path (1.8,-0.7)  coordinate (Q4);\path (2,-0.7)  coordinate (Q5);\path (2.2,-0.7)  coordinate (Q6);
\foreach \i in {1,...,9}
{\fill (P\i) circle (1pt);}
\foreach \i in {1,...,3}
{\fill (R\i) circle (1pt);}
\foreach \i in {1,...,6}
{\fill (Q\i) circle (0.5pt);}
\draw (P1)--(P2)--(P3)--cycle;
\draw (P1)--(P4)--(P5)--cycle;
\draw (P1)--(P6)--(P7)--cycle;
\draw (P1)--(P8)--(P9)--cycle;
\draw (R1)--(R2)--(R3)--cycle;
\draw (P1)--(R1);
\path (1.5,0.4) node () {\scriptsize{$K_{h+1}$}};\path (2.6,0.4) node () {\scriptsize{$K_{h+1}$}};
\path (1.5,-0.4) node () {\scriptsize{$K_{h+2}$}};\path (2.6,-0.4) node () {\scriptsize{$K_{h+2}$}};\path (0.3,0) node () {\scriptsize{$K_{h+1}$}};
\path (2,0.9) node () {$t_1$};\path (2,-0.9) node () {$t_2$};
\path (2,-1.5) node () {$F_{t_1,t_2,h,h+1}$};

\path (4.7,0)  coordinate (R1);\path (4,-0.6)  coordinate (R2);\path (4,0.6)  coordinate (R3);
\path (7.3,0)  coordinate (R4);\path (8,-0.6)  coordinate (R5);\path (8,0.6)  coordinate (R6);
\path (6,0)  coordinate (P1);\path (5,0.3)  coordinate (P2);\path (5.5,1)  coordinate (P3);
\path (6.5,1)  coordinate (P4);\path (7,0.3)  coordinate (P5);
\path (5,-0.3)  coordinate (P6);\path (5.5,-1)  coordinate (P7);
\path (6.5,-1)  coordinate (P8);\path (7,-0.3)  coordinate (P9);
\path (5.8,0.7)  coordinate (Q1);\path (6,0.7)  coordinate (Q2);\path (6.2,0.7)  coordinate (Q3);
\path (5.8,-0.7)  coordinate (Q4);\path (6,-0.7)  coordinate (Q5);\path (6.2,-0.7)  coordinate (Q6);
\foreach \i in {1,...,9}
{\fill (P\i) circle (1pt);}
\foreach \i in {1,...,6}
{\fill (R\i) circle (1pt);}
\foreach \i in {1,...,6}
{\fill (Q\i) circle (0.5pt);}
\draw (P1)--(P2)--(P3)--cycle;
\draw (P1)--(P4)--(P5)--cycle;
\draw (P1)--(P6)--(P7)--cycle;
\draw (P1)--(P8)--(P9)--cycle;
\draw (R1)--(R2)--(R3)--cycle;
\draw (R4)--(R5)--(R6)--cycle;
\draw (P1)--(R1);\draw (P1)--(R4);
\path (5.5,0.4) node () {\scriptsize{$K_{h+1}$}};\path (6.6,0.4) node () {\scriptsize{$K_{h+1}$}};
\path (5.5,-0.4) node () {\scriptsize{$K_{h+2}$}};\path (6.6,-0.4) node () {\scriptsize{$K_{h+2}$}};
\path (4.3,0) node () {\scriptsize{$K_{h+1}$}};\path (7.7,0) node () {\scriptsize{$K_{h+1}$}};
\path (6,0.9) node () {$t_1$};\path (6,-0.9) node () {$t_2$};
\path (6,-1.5) node () {$T_{t_1,t_2,h,h+1}$};
\path (4,-2.3) node () {Fig.4. $F_{t_1,t_2,h,h+1}$  and $T_{t_1,t_2,h,h+1}$.};
\end{tikzpicture}
\end{center}

\vspace{4mm}

\begin{Theorem}\label{Thm2}
Let $F=(\bigcup_{i=1}^k P_{2a_i})\bigcup (\bigcup_{ i=1}^2 P_{2b_i+1})$, $ h=\sum_{i=1}^{k}a_i+\sum _{i=1}^2 b_i-1$,
and  $G$ be a connected  graph of order $n$ with at least one cut vertex,  where    $k\geq0$,
$a_1\geq \dots \geq a_{k}\geq1$, $b_1\geq b_2\geq1$,  and $n\geq2h+4$.

(a).  If   $\delta(G)\geq h\geq1$ and  $k=0$, then $F\subseteq G$, unless one of the following holds:\\
(i)  $F=P_5\bigcup P_3$ and either $G\subseteq H_n^1$ or $G\subseteq H_n^2$;\\
 (ii) $F=P_{2b_1+1}\bigcup P_{2b_1-1}$ and $G=L_{t,h}$, where $n=th+1$;\\
  (iii) $F=2P_{2b_1+1}$ and $G=U_{3,h}$, where $n=3h+3$;\\
 (iv) $F=2P_{2b_1+1}$ and $G\subseteq L_{t_1,t_2,h,h+1}$, where   $n=t_1h+t_2(h+1)+1$\\
 (v) $F=2P_{2b_1+1}$ and  $ G\subseteq F_{t_1,t_2,h,h+1}$, where  $n=t_1h+(t_2+1)(h+1)+1$\\
 (\romannumeral6)   $F=2P_{2b_1+1}$ and $G\subseteq T_{t_1,t_2,h,h+1}$, where  $n=t_1h+(t_2+2)(h+1)+1$.

 (b). If $\delta(G)\geq h\geq2$ an $k\geq1$, then $F\subseteq G$, unless  $F=P_2\bigcup 2P_{2b_1+1}$ and $G=L_{t,h}$, where  $n=th+1$.
\end{Theorem}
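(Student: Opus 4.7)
The plan is to exploit the block--cut decomposition of $G$, reducing the question to how a bounded number of end blocks (each 2-connected with minimum $G$-degree $\geq h$ on its non-cut vertices) can be assembled so that no copy of $F$ fits. The starting structural observation is that every end block $B$ with unique cut vertex $u$ satisfies $|V(B)|\geq h+1$, since every non-cut vertex $v\in V(B)$ has $N_G(v)\subseteq V(B)$ and $d_G(v)\geq h$. For $h\geq 2$, each end block is 2-connected (a bridge would force a vertex of degree $1$), so Theorem~\ref{Thm5} is applicable inside it.

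The first step is to fix two end blocks $B_1,B_2$ with cut vertices $u_1,u_2$, and to record $p(B_1,B_2)$, the order of a longest $u_1$--$u_2$ path in $G$. Inside each $B_i$ I search for a long path starting at $u_i$: using 2-connectivity together with the $G$-minimum-degree hypothesis, standard P\'osa-type rotations inside $B_i$ produce either a $P_{2b_i+1}$ rooted at $u_i$ or force $B_i$ to be small, essentially $K_{h+1}$ or $K_{h+2}$. Routing one such rooted path in $B_1$ disjointly from one in $B_2$ through the cut-vertex structure provides the two odd paths of $F$, and the even paths $P_{2a_i}$ (if present in part~(b)) can be placed in the remainder of $G$, possibly using the backbone $p(B_1,B_2)$.

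The exceptional graphs in part~(a) arise precisely when many end blocks are small: several $K_{h+1}$'s and $K_{h+2}$'s attached to a common hub give $L_{t_1,t_2,h,h+1}$; attaching one extra $K_{h+1}$ as a separate end block gives $F_{t_1,t_2,h,h+1}$; attaching two extra $K_{h+1}$'s gives $T_{t_1,t_2,h,h+1}$; and three hub-like cut vertices forming a triangle produce $U_{3,h}$. The graphs $H_n^1$ and $H_n^2$ appear in the special $h=2$ case $F=P_5\bigcup P_3$, because $K_3$ end blocks cannot supply a $P_5$ by themselves. For part~(b) the presence of $P_{2a_1}$ gives parity flexibility that eliminates most exceptions; the only configuration where the embedding still fails is $F=P_2\bigcup 2P_{2b_1+1}$ with $G=L_{t,h}$, since two disjoint odd paths exhaust two $K_{h+1}$ end blocks through the single cut vertex and leave no independent edge for the extra $P_2$.

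The main obstacle will be the bookkeeping in part~(a): six distinct exceptional families must each be verified to be $F$-free, and conversely every $G$ outside these families must be shown to embed $F$. The crux is an enumeration of how end blocks of sizes $h+1$ and $h+2$ can be attached to each other under $\delta(G)\geq h$ in the interior of $G$; the boundary case $h=1$ (hence $F=2P_3$) requires separate handling since end blocks may be single edges and P\'osa rotations degenerate, and is best dispatched by direct small-graph case analysis.
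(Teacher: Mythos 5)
Your overall frame --- block--cut decomposition, end blocks of order at least $h+1$, and a case analysis on $p(B_1,B_2)$ and on how small blocks attach to the interior --- is the same as the paper's (Lemma~\ref{Lem102} for $k=0$, reduced from Lemma~\ref{101} for $h\le 2$), and you correctly identify all six exceptional families and the need to treat $h\in\{1,2\}$ separately. However, there is a genuine gap in your central embedding step. You propose to produce each odd path $P_{2b_i+1}$ of $F$ rooted at $u_i$ \emph{inside} the end block $B_i$, but in the non-exceptional situation the end blocks are forced to have order at most $h+2$, while $2b_1+1$ can be as large as $2h+1$ (take $k=0$, $b_2=1$, $b_1=h$); the longer odd path therefore necessarily crosses several blocks, and ``either a $P_{2b_i+1}$ rooted at $u_i$ or $B_i$ is small'' is a false dichotomy. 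Relatedly, Theorem~\ref{Thm5} is not applicable inside an end block: it requires order at least $4(2h+1)^2\binom{2h+1}{h}$, vastly more than $h+2$, and the cut vertex need not have degree $\geq h$ within the block. The paper's substitute is Lemma~\ref{Lem7}, which needs the degree condition off only one vertex and yields a path of order $\min\{|V(B)|,2h\}$ ending at the cut vertex; concatenating such rooted paths through the cut-vertex structure, one obtains disjoint pairs such as $P_{h+2}\bigcup P_{2h+1}$, $P_{h+1}\bigcup P_{2h+1}$, $P_{h}\bigcup P_{2h+1}$ spanning several blocks, and each case is then closed by the arithmetic $h+2\geq 2b_2+1$ and $2h+1\geq 2b_1+1$ (with the sharper variants when $b_1\geq b_2+1$ or $b_1\geq b_2+2$), which forces $F\subseteq G$ unless the block pattern is one of the listed exceptions. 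Without this multi-block path construction, or an equivalent, your case analysis cannot be completed.

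A second, smaller gap is part (b). Placing the even paths ``in the remainder'' after embedding the two odd paths is not straightforward, since the remainder of an $L_{t,h}$-like graph after deleting two long paths can be an independent set. The paper avoids this by concatenation: set $F'=P_{2b_1'+1}\bigcup P_{2b_2'+1}$ with $b_1'=\sum_{i=1}^k a_i+b_1$ and $b_2'=b_2$; then a copy of $F'$ contains a copy of $F$, so $F\nsubseteq G$ forces $F'\nsubseteq G$, and applying the $k=0$ result to $F'$ together with $b_1'>b_2'$ pins down $F=P_2\bigcup 2P_{2b_1+1}$ and $G=L_{t,h}$. You should adopt this reduction, or else supply an explicit argument for distributing the paths $P_{2a_i}$ in each exceptional configuration.
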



The rest of this paper is organized as follows. In Section 2, some technical lemmas are provided.
 In Section 3, we present the proofs of Theorems~\ref{Thm1} and \ref{Thm4}. In Section~4, we  present the proofs of Theorems~\ref{Thm5} and \ref{Thm2}.

\section{Preliminary}

In order to prove Theorems~\ref{Thm1}--\ref{Thm2}, we require several known and new technical lemmas.

\begin{Lemma}\label{Lem7}\cite{EG}
Let  $G$ be a 2-connected graph of order $n$ with  $u_1\in V(G)$. If $d(u)\geq h\geq2$ for any vertex $u$ different from  $u_1$, then there is a path $P_{\min\{n, 2h\}}$ with end vertex  $u_1$ and $P_{\min\{n, 2h\}}\subseteq G$.
\end{Lemma}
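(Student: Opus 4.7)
The plan is to run a Pósa-style rotation-extension argument on a longest $u_1$-path, in the spirit of the classical Erd\H{o}s--Gallai cycle theorem but with one endpoint pinned. Let $P=v_1v_2\cdots v_k$ be a longest path in $G$ starting at $v_1=u_1$; I would argue by contradiction that $k<\min\{n,2h\}$ is impossible. So assume $k\le 2h-1$ and $k<n$.

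Since $P$ is maximal, every neighbor of $v_k$ lies on $P$, and as $v_k\ne u_1$ the hypothesis gives $d_G(v_k)\ge h$; hence $v_k$ has at least $h$ neighbors among $v_1,\ldots,v_{k-1}$. For each such neighbor $v_i$ with $i\le k-2$, the rotation yields the longest $u_1$-path $v_1\cdots v_i v_k v_{k-1}\cdots v_{i+1}$, which ends at the new vertex $v_{i+1}$. Let $S$ be the Pósa closure: the set of $w\in V(P)$ that can serve as the free endpoint of a longest $u_1$-path on the vertex set $V(P)$, obtained after any sequence of rotations. Each $w\in S$ satisfies $w\ne u_1$ and, by maximality of $P$, $N_G(w)\subseteq V(P)$ with $d_G(w)\ge h$; a standard counting in the rotation closure then forces both $|S|$ and the combined neighborhoods of its members to occupy a dense block of indices in $V(P)$.

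To produce a contradiction, I would combine this with $k<n$ and 2-connectedness. Pick $x\notin V(P)$; since $G$ is 2-connected, $G$ contains two internally disjoint paths from $x$ into $V(P)$ that end at distinct vertices, and we may choose them so that both endpoints, say $v_p$ and $v_q$, lie in $V(P)\setminus\{u_1\}$ (using that $G-u_1$ is connected). The idea is then to rotate $P$ so that $v_p$ or $v_q$ becomes the free endpoint of a longest $u_1$-path, and splice the external arc through $x$ onto this rotated path; this yields a $u_1$-path strictly longer than $P$.

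The main obstacle is exactly the compatibility between the rotation-reachable set $S$ and the attachment pair $\{v_p,v_q\}$: we need at least one of $v_p,v_q$ to lie in $S$, possibly after enlarging $S$ by further rotations from its current members. This is the technical heart of the Pósa/Erd\H{o}s--Gallai argument, and it is where the bound $k<2h$ is essential, since each new element of $S$ contributes $\ge h$ neighbors that must all fit into the fewer than $2h$ slots of $V(P)$, forcing the closure to be large enough to meet every two prescribed vertices of $V(P)\setminus\{u_1\}$. Once such an attachment is secured, splicing the external arc onto the rotated path while discarding one superfluous edge produces the desired longer $u_1$-path, closing the contradiction and delivering the asserted $P_{\min\{n,2h\}}\subseteq G$ with endpoint $u_1$.
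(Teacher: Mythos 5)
The paper does not prove this lemma at all: it is quoted as a known result of Erd\H{o}s and Gallai and cited to \cite{EG}, so there is no in-paper argument to compare yours against. Judged on its own, your rotation--extension outline is a reasonable strategy, but it has a genuine gap at exactly the step you flag as ``the technical heart.'' You need one of the two attachment vertices $v_p,v_q$ to be reachable as the free endpoint of a rotated longest $u_1$-path, and you justify this by asserting that the degree condition together with $k<2h$ forces the P\'osa closure $S$ to ``meet every two prescribed vertices of $V(P)\setminus\{u_1\}$.'' The standard closure counting does not give this: the P\'osa lemma yields $|N(S)|\le 2|S|+O(1)$, and since some member of $S$ has at least $h$ neighbours on $P$ this only forces $|S|\gtrsim h/2$, while $|V(P)\setminus\{u_1\}|$ can be as large as $2h-2$. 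A set of size roughly $h/2$ inside a set of size roughly $2h$ can easily avoid both of two prescribed vertices, so the claimed compatibility does not follow from the counting you invoke, and the contradiction is not actually produced. A correct proof needs a different mechanism here --- e.g.\ the original Erd\H{o}s--Gallai induction, or the cycle-based argument (take the cycle $v_{i_1}\cdots v_kv_{i_1}$ through the first neighbour $v_{i_1}$ of the terminal vertex and analyse how the rest of $G$ attaches to it), neither of which reduces to ``the closure hits every pair.''

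A secondary, smaller issue: you claim that because $G-u_1$ is connected you may choose both attachment points of the fan from $x$ in $V(P)\setminus\{u_1\}$. Connectivity of $G-u_1$ gives one such path, but two internally disjoint paths from $x$ into $V(P)$ both avoiding $u_1$ would require $2$-connectivity of $G-u_1$, which is not assumed; the case where one leg of the fan ends at $u_1$ has to be handled separately (and splicing through $u_1$ does not obviously lengthen a $u_1$-rooted path). Neither defect is fatal to the general plan, but as written the proof is incomplete at its decisive step.
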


\begin{Lemma}\label{Lem9}\cite{Dirac}
Let $G$ be a 2-connected graph of order  at least  $2h$, where $h\geq2$. If $\delta(G)\geq h$, then $C_l\subseteq G$, where $l\geq 2h$.
\end{Lemma}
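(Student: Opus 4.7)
The plan is to prove this classical theorem of Dirac via the longest path argument, using the 2-connectivity hypothesis to upgrade a long path into a long cycle. The approach is the standard Pósa-style rotation-extension, and the minimum degree condition enters through a pigeonhole count on the neighborhoods of the path endpoints.

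Concretely, I would first take a longest path $P = v_0 v_1 \cdots v_p$ in $G$. Since $P$ is maximal, $N_G(v_0) \cup N_G(v_p) \subseteq V(P)$. Introduce the index sets
\[
S = \{\, i : v_0 v_{i+1} \in E(G),\ 0 \le i \le p-1 \,\}, \qquad T = \{\, i : v_p v_i \in E(G),\ 0 \le i \le p-1 \,\},
\]
both contained in $\{0, 1, \ldots, p-1\}$, with $|S|, |T| \ge \delta(G) \ge h$. The first step is to show $p + 1 \ge 2h$: if instead $p < 2h - 1$, then $|S| + |T| \ge 2h > p$ forces $S \cap T \ne \emptyset$, and any $i \in S \cap T$ yields the cycle $v_0 v_1 \cdots v_i v_p v_{p-1} \cdots v_{i+1} v_0$ of length $p+1$ through every vertex of $P$. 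Since $G$ is connected and $P$ is a longest path, this Hamilton cycle on $V(P)$ must exhaust $V(G)$ (otherwise any outside neighbor of $V(P)$ could be spliced in to extend $P$), giving $n = p + 1 < 2h$, which contradicts $n \ge 2h$.

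Thus $p \ge 2h - 1$, so $P$ has at least $2h$ vertices. The second step is to convert this into a cycle of length at least $2h$. Here I would apply the 2-connectivity: picking $v_0$ as a distinguished vertex, Lemma~\ref{Lem7} already gives a path of order $\min\{n, 2h\}$ with endpoint $v_0$, and a standard rotation-and-close argument on this path (using a second vertex-disjoint $v_0$-to-path connection guaranteed by 2-connectivity) yields a cycle of length at least $\min\{n, 2h\} = 2h$ through $v_0$, as required.

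The main obstacle is the conversion from a long path to a long cycle: a short-cycle closure on a long path only gives $C_{p+1}$ if $S\cap T\ne\emptyset$, which is not automatic once $p\ge 2h$. To handle this one must use 2-connectivity essentially — namely to guarantee a second disjoint re-entry of the path back to itself — which is exactly what Lemma~\ref{Lem7} encodes. An alternate route would dispense with paths entirely: take a longest cycle $C$, and in the case $V(C)\subsetneq V(G)$, invoke Menger's theorem to find two internally disjoint $V(C)$-to-$V(C)$ paths through a fixed external vertex; the standard arc-swap together with the maximality of $|C|$ and $\delta(G)\ge h$ then forces $|C|\ge 2h$.
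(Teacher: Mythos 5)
The paper offers no proof of this lemma at all---it is quoted verbatim from Dirac's 1952 paper---so the only thing to assess is your attempt on its own terms, and it has a genuine gap at exactly the point where the theorem is hard. The first half is correct: with $S$ and $T$ as you define them, either $S\cap T\neq\varnothing$, in which case the crossing cycle is Hamiltonian on $V(P)$, hence (by maximality of $P$ and connectedness) Hamiltonian on $G$ and of length $n\geq 2h$, and you are done; or $S\cap T=\varnothing$, whence $p\geq |S|+|T|\geq 2h$. But the second half---converting a path on at least $2h$ vertices into a cycle of length at least $2h$---is asserted rather than proved. A ``second vertex-disjoint $v_0$-to-path connection'' closes a cycle whose length is dictated by where that connection re-enters the path: if it re-enters at $v_j$ with $j$ small, the resulting cycle is short, and nothing in your argument controls $j$. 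The alternate route fares no better as sketched: a Menger fan from an external vertex $v$ to a longest cycle $C$ gives two attachment points, and the arc-swap only forces each of the two arcs between them to have length at least $2$, i.e.\ $|C|\geq 4$, not $|C|\geq 2h$. To reach $2h$ one would need all $h$ neighbours of $v$ to lie on $C$ (false in general, since $v$ may have many neighbours inside its component of $G-V(C)$), or else a genuinely more delicate analysis of long paths through the components of $G-V(C)$ between their attachment vertices---equivalently, the classical argument that joins the two ``end-cycles'' of a longest path by two disjoint paths. That analysis is the actual content of Dirac's theorem, and it is missing.

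A secondary point: invoking Lemma~\ref{Lem7} inside this proof is formally admissible here, since that lemma is likewise only cited, but it does not help you---even granting a path of order $2h$ with prescribed endpoint $v_0$, the path-to-cycle difficulty above reappears unchanged. Since the paper treats the statement as a known black box, the cleanest fix is simply to cite Dirac (or Erd\H{o}s--Gallai) as the paper does; if you want a self-contained proof, you must actually carry out the end-cycle merging or the $G-V(C)$ component analysis rather than gesture at it.
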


\begin{Lemma}\label{Lem5}\cite{LLP}
Let  $G$ be a graph of order $n$ with a set $P$ of $p$ vertices, where $p\geq2$ and  $n\geq4p^2\binom{p}{\lfloor\frac{p}{2}\rfloor}$. If
$e(P,V(G)\backslash P)\geq (\lfloor\frac{p}{2}\rfloor-\frac{1}{2})n$, then $P$ contains a subset of $\lfloor\frac{p}{2}\rfloor$ vertices
with a common neighborhood of $p$ vertices in $V(G)\backslash P$.
\end{Lemma}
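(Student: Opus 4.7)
The plan is to combine a simple double-count of high-$P$-degree vertices in $V(G)\setminus P$ with pigeonhole on the $\binom{p}{s}$ possible $s$-element subsets of $P$, where $s := \lfloor p/2\rfloor$. The target is to locate a single $s$-set $S\subseteq P$ whose common neighborhood inside $V(G)\setminus P$ has size at least $p$.

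For each $v\in V(G)\setminus P$, write $d_P(v):=|N_G(v)\cap P|$ and set
\[
A\;:=\;\{\,v\in V(G)\setminus P : d_P(v)\geq s\,\}.
\]
First I would lower-bound $|A|$. Using the trivial bounds $d_P(v)\leq p$ on $A$ and $d_P(v)\leq s-1$ off $A$, the hypothesis $e(P,V(G)\setminus P)\geq (s-\tfrac12)n$ gives
\[
p\,|A|+(s-1)(n-p-|A|)\;\geq\;\bigl(s-\tfrac12\bigr)n,
\]
which rearranges to
\[
|A|\;\geq\;\frac{n/2+(s-1)p}{p-s+1}\;\geq\;\frac{n}{2p},
\]
since $p-s+1=\lceil p/2\rceil+1\leq p$ for all $p\geq 2$.

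Next I would apply pigeonhole. Each $v\in A$ contains at least one $s$-element subset of $P$ inside $N_G(v)\cap P$, and there are exactly $\binom{p}{s}$ candidate $s$-subsets of $P$. Hence some fixed $S\subseteq P$ with $|S|=s$ lies inside $N_G(v)\cap P$ for at least
\[
\frac{|A|}{\binom{p}{s}}\;\geq\;\frac{n}{2p\binom{p}{s}}
\]
vertices $v\in V(G)\setminus P$, and each such $v$ belongs to the common $G$-neighborhood of $S$ inside $V(G)\setminus P$. The hypothesis $n\geq 4p^{2}\binom{p}{\lfloor p/2\rfloor}$ then yields $\frac{n}{2p\binom{p}{s}}\geq 2p\geq p$, producing the desired $s$-set.

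The only delicate point is making sure the low-$P$-degree vertices do not eat up the whole edge count; this is exactly what the displayed inequality isolates, and the factor $4$ in the hypothesis (versus the $2p^{2}\binom{p}{s}$ that the argument literally consumes) comfortably absorbs the lower-order slack coming from replacing $n-p$ by $n$. No further machinery is needed.
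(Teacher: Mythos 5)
Your proof is correct: the degree split at threshold $s=\lfloor p/2\rfloor$, the rearrangement giving $|A|\geq n/(2p)$, and the pigeonhole over the $\binom{p}{s}$ candidate subsets are all sound, and the numerical hypotheses are used with room to spare. The paper itself offers no proof of this lemma---it is quoted verbatim from Lidick\'{y}, Liu, and Palmer \cite{LLP}---so there is nothing internal to compare against; your double-counting-plus-pigeonhole argument is the standard route and is essentially the one used in that source, so you have in effect supplied the missing self-contained justification.
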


\begin{Lemma}\label{Lemma18}
Let $G$ be a connected graph of order $n$ with  a longest cycle $C_l$ and $\delta(G)\geq h\geq2$.
Let $U=V(G)\backslash V(C_l)$. \\
(i). If $l=2h$ and $U$ is an independent set, then $G\subseteq S_{n,h}$.\\
(ii).  If $l=2h+1$ and $P_{2h+3}\nsubseteq G$, then $G\subseteq S^+_{n,h}$.\\
(iii).   If $l=2h+2$ and $P_{2h+4}\nsubseteq G$, then  $N_{C_l}(u_1)=N_{C_l}(u_2)$ and $h\leq d_{C_l}(u_1)=d_{C_l}(u_2)\leq h+1$ for every pair of vertices $u_1,u_2\in U$.
\end{Lemma}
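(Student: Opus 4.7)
The plan is to handle all three parts uniformly via the classical \emph{insertion trick} on a longest cycle $C_l=v_1v_2\cdots v_l v_1$ (indices mod $l$). The basic observation is that if some $u\in U$ is adjacent to two consecutive vertices $v_i,v_{i+1}$ on $C_l$, then replacing the cycle edge $v_iv_{i+1}$ by the detour $v_iuv_{i+1}$ yields a cycle of length $l+1$, contradicting the maximality of $C_l$; hence $N_{C_l}(u)$ is an independent set on $C_l$ for every $u\in U$. A two-vertex refinement shows that if $u_1u_2\in E(G)$ with $u_1,u_2\in U$ and $u_iv_{j_i}\in E(G)$ for cyclically consecutive $j_1,j_2$, then a cycle of length $l+2$ (or a suitably long path) arises.

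For (i), independence of $U$ gives $d_{C_l}(u)=d(u)\geq h$; yet an independent set on $C_{2h}$ has size at most $h$, so $d_{C_l}(u)=h$ and $N_{C_l}(u)$ equals one of the two parity classes of $C_{2h}$, say $A=\{v_1,v_3,\ldots,v_{2h-1}\}$. If some $u'\in U$ had $N_{C_l}(u')=\{v_2,v_4,\ldots,v_{2h}\}$, the cycle $u\,v_1\,v_l\,u'\,v_2\,v_3\cdots v_{l-1}\,u$ of length $l+2$ would contradict maximality of $C_l$; so every $u\in U$ has $N_{C_l}(u)=A$. Similarly, any chord $v_{2i}v_{2j}$ combined with a $u\in U$ yields a cycle of length $l+1$, so $\{v_2,\ldots,v_{2h}\}$ is independent. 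Thus all edges of $G$ lie in $A$ or between $A$ and $V(G)\backslash A$, giving $G\subseteq K_h\bigvee\overline{K}_{n-h}=S_{n,h}$.

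For (ii), insertion on $C_{2h+1}$ forces $d_{C_l}(u)\leq h$ for every $u\in U$, and when equality holds the gap pattern of $N_{C_l}(u)$ around the cycle is $(2,\ldots,2,3)$, singling out one two-edge arc on $C_l$ missed by $u$. The hypothesis $P_{2h+3}\nsubseteq G$ then rules out: (a) distinct such arcs for different vertices of $U$; (b) edges inside $U$ joining $u_1,u_2$ to disjoint cycle-regions; and (c) chords of $C_l$ outside a distinguished common set $A$ of $h$ cycle-vertices. Checking that $A$ induces a $K_h$ (via insertion applied to any missing edge in $A$) and that $V(C_l)\backslash A$ spans at most a single edge (with all of $U$ attached only to $A$) yields $G\subseteq S^+_{n,h}$. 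This case is the main obstacle, since several subcases (different gap patterns, interior edges of $U$, and pendant structures at cycle vertices) must be disentangled simultaneously.

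For (iii), insertion on $C_{2h+2}$ gives $d_{C_l}(u)\leq h+1$ for every $u\in U$. If $u\in U$ had a neighbor $u'\in U$, then for any cycle-neighbor $v_i$ of $u$ the path $v_{i+1}v_{i+2}\cdots v_{i-1}v_iuu'$ (going once around $C_l$ and out through $u,u'$) would have order $l+2=2h+4$, contradicting $P_{2h+4}\nsubseteq G$; similarly, if $u$ had no cycle-neighbor at all, connectedness of $G$ forces a path from $u$ through $U$ to $C_l$ whose concatenation with an arc of $C_l$ again produces a $P_{2h+4}$. Hence $d_U(u)=0$ and $d_{C_l}(u)=d(u)\geq h$. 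Finally, if $u_1,u_2\in U$ had $N_{C_l}(u_1)\neq N_{C_l}(u_2)$, picking $v_i$ in the symmetric difference and applying the two-vertex insertion to $u_1,u_2$ (using that each has an independent set of $h$ or $h+1$ cycle-neighbors) produces either a cycle of length $l+1$ or $l+2$ or a $P_{2h+4}$, in each case a contradiction. Therefore $N_{C_l}(u_1)=N_{C_l}(u_2)$ and consequently $d_{C_l}(u_1)=d_{C_l}(u_2)$.
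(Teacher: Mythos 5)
Part (i) of your proposal is correct and is essentially the paper's own argument: the same length-$(l+2)$ cycle $uv_1v_{2h}u'v_2\cdots v_{2h-1}u$ rules out two parity classes, and the same chord-insertion shows the complementary class is independent. The difficulty is that in (ii) and (iii) you have written a plan rather than a proof: the decisive steps are announced as things to be ``checked'' or ``disentangled'', and in each case the step you skip is exactly the nontrivial content. In (ii) you never establish that $U$ is an independent set, which is what gives $d_{C_l}(u)=d_G(u)\geq h$ and hence the equality $d_{C_l}(u)=h$ on which your gap-pattern analysis rests; the needed observation is that an edge $u_1u_2$ in $G[U]$ whose component attaches anywhere to $C_{2h+1}$ already yields a path of order $2h+3$, whereas your item (b) only excludes edges attaching to ``disjoint cycle-regions''. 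You also assert that one must check that ``$A$ induces a $K_h$'': this is neither provable nor needed, since $G\subseteq S^+_{n,h}$ only requires that $G-A$ span at most one edge (the paper gets this from the two independent sets $\{v_1,v_3,\ldots,v_{2h-1}\}$ and $\{v_3,\ldots,v_{2h+1}\}$).

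In (iii) the entire point of the statement is the final claim $N_{C_l}(u_1)=N_{C_l}(u_2)$, and your justification is one sentence asserting that the two-vertex insertion produces a longer cycle or a $P_{2h+4}$ ``in each case''. The argument that has to be made explicit is: if $v\in N_{C_l}(u_2)\backslash N_{C_l}(u_1)$, then $P_{2h+4}\nsubseteq G$ forces both cycle-neighbours of $v$ out of $N_{C_l}(u_1)$, confining $N_{C_l}(u_1)$ to $2h-1$ consecutive vertices of $C_{2h+2}$. But a set with no two consecutive vertices inside $2h-1$ consecutive vertices can still have $h$ elements, so this does \emph{not} immediately contradict $d_{C_l}(u_1)\geq h$; the residual case $d_{C_l}(u_1)=h$ with $N_{C_l}(u_1)$ equal to the unique alternating $h$-subset still has to be killed (for $h=2$, $l=6$ one can even build configurations with $N_{C_6}(u_1)=\{v_2,v_6\}$ and $v_4\in N_{C_6}(u_2)$ in which no $P_8$ and no longer cycle is apparent, so the contradiction is genuinely delicate here, and the paper's own bound of $h-1$ at this point deserves scrutiny as well). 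Since your sketch does not engage with this case at all, the key step of (iii) is missing.
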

\begin{Proof}
Let $C_l=v_1v_2\cdots v_lv_1$. Since $C_l$ is a longest cycle, none of vertices in $U$ is adjacent to any  two consecutive  vertices of $C_l$.

$(i)$. Since $l=2h$ and $U$ is an independent set, $d_{C_l}(u)=h$ for all $u\in U$. Furthermore either $N_{C_l}(u)=\{v_1,v_3,\ldots,v_{2h-1}\}$
or $N_{C_l}(u)=\{v_2,v_4,\ldots,v_{2h}\}$ for all $u\in U$. In fact, if there exist two distinct vertices $u_1,u_2\in U$ such that
 $N_{C_l}(u_1)=\{v_1,v_3,\ldots,v_{2h-1}\}$ and $N_{C_l}(u_2)=\{v_2,v_4,\ldots,v_{2h}\}$,
 then a cycle $u_1v_1v_{2h}u_2 v_2 v_3v_4 \cdots v_{2h-3}v_{2h-2}v_{2h-1}u_1$ is a longer cycle than $C_l$,  a contradiction.
 Hence,   we assume  without loss of generality that $N_{C_l}(u)=\{v_2,v_4,\ldots,v_{2h}\}$ for all $u\in U$.
Moreover, $\{v_1,v_3,\ldots,v_{2h-1}\}$ is an independent set. In fact, if there exists an edge $v_{2s+1}v_{2t+1}\in E(G)$
with $1\leq 2s+1<2t+1\leq 2h-1$,  then a cycle $uv_{2t}v_{2t-1}v_{2t-2}\cdots v_{2s+2}v_{2s+1}v_{2t+1}v_{2t+2}\cdots  v_{2h-1}v_{2h}v_1v_2\cdots v_{2s-1}v_{2s}u$
for $u\in U$ is a longer cycle than $C_l$, a contradiction.
Hence $G\subseteq S_{n,h}$.

 $(ii)$.  If $l=2h+1$ and $P_{2h+3}\nsubseteq G$, then $U$ is an independent set and  $d_{C_l}(u)=h$ for all $u\in U$.
Furthermore, we claim that $N_{C_l}(u_1)=N_{C_l}(u_2)$ for any two vertices $u_1, u_2\in U$. In fact,  we suppose without loss of generality that  there exists
 $v\in N_{C_l}(u_2)\backslash N_{C_l}(u_1)$.
Since  $P_{2h+3}\nsubseteq G$, neither of two neighbors of $v$ along $C_l$  belongs to $N_{C_l}(u_1)$.
Hence  $N_{C_l}(u_1)$ is a subset of a set consisting of $2h-2$ consecutive vertices of ${C_l}$. Furthermore,  since ${C_l}$ is a longest cycle, $u_1$ is not adjacent to any two
consecutive vertices of ${C_l}$. Hence we $d_{C_l}(u_1)\le h-1$, a contradiction.
We assume without loss of generality that $N_{C_l}(u)=\{v_2,v_4,\ldots,v_{2h}\}$ for all $u\in U$.  Moreover,   $\{v_1,v_3,\ldots, v_{2h-1}\}$
is an independent set. Otherwise there exists an edge $v_{2s+1}v_{2t+1}\in E(G)$ with $1\leq 2s+1<2t+1\leq 2h-1$,
 which causes a longer cycle $uv_{2t}v_{2t-1}v_{2t-2} \cdots v_{2s+2}v_{2s+1}v_{2t+1}v_{2t+2}\cdots v_{2h} v_{2h+1}v_1\cdots v_{2s}u$ for $u\in U$  than $C_l$,
 a contradiction.
Similarly,  $\{v_3,\ldots, v_{2h+1}\}$ is also an independent set.
Hence  $G\big[U\bigcup \{v_1,\ldots, v_{2h+1}\}\big]$  contains at most  one edge $v_1v_{2h+1}$. Therefore $G\subseteq S^+_{n,h}$.

$(iii).$    If $l=2h+2$ and $P_{2h+4}\nsubseteq G$, then  $U$ is an independent set and   $h\leq d_{C_l}(u)\leq h+1$ for all $u\in U$.
Furthermore,  $N_{C_l}(u_1)=N_{C_l}(u_2)$ for any  two vertices $u_1, u_2\in U$. In fact,  suppose without loss of generality that there exists a vertex  $v\in N_{C_l}(u_2)\backslash N_{C_l}(u_1)$. Since  $P_{2h+4}\nsubseteq G$, neither of two neighbors of $v$ along $C_l$  belongs to $N_{C_l}(u_1)$.
 Hence  $N_{C_l}(u_1)$ is a subset of a set consisting of $2h-1$ consecutive vertices of ${C_l}$. Furthermore,  since ${C_l}$ is a longest cycle, $u_1$ is not adjacent  to any two consecutive vertices of $C_l$. Hence $d_{C_l}(u_1)\le h-1$, a contradiction.  Hence $N_{C_l}(u_1)=N_{C_l}(u_2)$ and the assertion holds.
\end{Proof}

\begin{Lemma}\label{Lemma19}
Let $G$ be a connected graph of order $n$ with  a longest cycle $C_l$ and minimum degree $\delta(G)$, where  $l\leq 2h+1$  and $\delta(G)\geq h\geq2$.
Let  $U=V(G)\backslash V(C_l)$.\\
(i).  If $G[U]$ is $P_3$-free, then  $N_{C_l}(u_1)=N_{C_l}(u_2)$ for every edge $u_1u_2\in E(U)$.\\
(ii). If $G[U]$ is $P_4$-free, then $N_{C_l}(u_1)=N_{C_l}(u_3)$ for every  $P_3=u_1u_2u_3\subseteq G[U]$.
\end{Lemma}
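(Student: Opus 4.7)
The plan is to exploit the maximality of $C_l$ via a rotation (edge-insertion) argument, in the spirit of the proof of Lemma~\ref{Lemma18}. For part (i), I first note that since $G[U]$ is $P_3$-free and $u_1u_2\in E(G[U])$, the component of $G[U]$ containing this edge is exactly $\{u_1,u_2\}$, so $d_{C_l}(u_1),d_{C_l}(u_2)\ge h-1$. Assuming for contradiction that some $v\in N_{C_l}(u_2)\setminus N_{C_l}(u_1)$ exists, relabel so that $v=v_1$ on $C_l=v_1v_2\cdots v_lv_1$. I would then show that $N_{C_l}(u_1)\subseteq\{v_4,\ldots,v_{l-2}\}$ by ruling out each $v_j$ with $j\in\{l-1,l,1,2,3\}$: a putative edge $u_1v_j$ with $v_j$ at cyclic distance at most $2$ from $v_1$ lets us splice the $3$-edge path $v_j u_1 u_2 v_1$ into $C_l$ and obtain a strictly longer cycle. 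For instance, $u_1v_2\in E$ gives $v_l v_{l-1}\cdots v_2 u_1 u_2 v_1 v_l$ of length $l+2$, while $u_1v_3\in E$ gives a cycle of length $l+1$ that skips $v_2$; both contradict maximality of $C_l$. Combining this with the standard ``no two consecutive neighbors'' constraint for $u_1$, $N_{C_l}(u_1)$ is an independent set on an arc of $l-5$ consecutive cycle vertices, so $|N_{C_l}(u_1)|\le\lceil(l-5)/2\rceil\le h-2$ using $l\le 2h+1$, contradicting $|N_{C_l}(u_1)|\ge h-1$.

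For part (ii) I would run the same scheme with the longer ``anchor path'' $u_1u_2u_3v_1$. The $P_4$-freeness of $G[U]$ forces every $U$-neighbor of $u_1$ (respectively $u_3$) into $\{u_2,u_3\}$ (respectively $\{u_1,u_2\}$), because any other $U$-neighbor of $u_1$ would extend $u_1u_2u_3$ to a forbidden $P_4$; in particular $d_{C_l}(u_1),d_{C_l}(u_3)\ge h-2$. Assuming for contradiction $v=v_1\in N_{C_l}(u_3)\setminus N_{C_l}(u_1)$, an edge $u_1v_j$ with $v_j$ at cyclic distance at most $3$ from $v_1$ would allow me to replace an arc of $C_l$ of length at most $3$ with the $4$-edge path $v_j u_1 u_2 u_3 v_1$, giving a cycle of length $l+1$, $l+2$, or $l+3$ and again contradicting maximality. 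This confines $N_{C_l}(u_1)$ to the arc $\{v_5,\ldots,v_{l-3}\}$, so $|N_{C_l}(u_1)|\le\lceil(l-7)/2\rceil\le h-3$, contradicting $|N_{C_l}(u_1)|\ge h-2$.

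The step I expect to be the most error-prone is the bookkeeping in the forbidden-edge enumeration: for each candidate $v_j$ one must check which side of $v_1$ it lies on, how many cycle edges are skipped by the insertion, and that the net change in length is strictly positive. A minor subtlety that may need a separate line is the small-$h$ corner case, where the forbidden arc covers all of $C_l$ (e.g.\ $h=2$ and $l\le 5$ in (i), or $l\le 7$ in (ii)); there the contradiction should come directly from $d(u_1)\le |N_U(u_1)|+|N_{C_l}(u_1)|<h$, rather than from the independent-set bound on the arc.
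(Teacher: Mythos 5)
Your proposal is correct and follows essentially the same route as the paper's proof: a lower bound $d_{C_l}(u_1)\ge h-1$ (resp.\ $h-2$) coming from $P_3$- (resp.\ $P_4$-) freeness of $G[U]$, combined with the facts that every $C_l$-neighbor of $u_1$ must lie at cyclic distance at least $3$ (resp.\ $4$) from $v$ and that no two such neighbors are consecutive, yielding the contradictory upper bound $d_{C_l}(u_1)\le h-2$ (resp.\ $h-3$). You merely make explicit the cycle-splicing computations and the small-$l$ degenerate case that the paper leaves implicit.
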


\begin{Proof}
$(i)$. Suppose  that there exists  a vertex $v\in N_{C_l}(u_2)\backslash N_{C_l}(u_1)$  for some edge $u_1u_2\in E(U)$.
 Since $C_l$ is a  longest cycle, the distance along $C_l$ between $v$ and any vertex in $N_{C_l}(u_1)$ is at least $3$.  Thus $N_{C_l}(u_1)$ is a subset of  a set consisting of
 at most $2h-5$ consecutive  vertices of ${C_l}$. Furthermore,  since $C_l$ is a  longest cycle, $u_1$ is not adjacent to any two consecutive vertices of $C_l$.
These discussions imply that $d_{C_l}(u_1)\le h-2$.  However, since $P_3\nsubseteq G[U]$, we have $d_{C_l}(u_1)\geq d_G(u_1)-1\geq h-1$,
  a contradiction. Hence the assertion holds.

$(ii)$. Suppose that there exists a vertex $v\in N_{C_l}(u_3)\backslash N_{C_l}(u_1)$ for some path $P_3=u_1u_2u_3\subseteq G[U]$.
Since $C_l$ is a  longest cycle,  the distance along $C_l$ between $v$ and any vertex in $N_{C_l}(u_1)$ is at least $4$. Hence $N_{C_l}(u_1)$ is a subset of a set consisting of
 at most $2h-7$ consecutive vertices of $C_l$. Furthermore,  since $C_l$ is a  longest cycle,  $u_1$ is not adjacent to any  two consecutive vertices of $C_l$.
These discussions imply that $d_{C_l}(u_1)\leq h-3$. However, since $P_4\nsubseteq G[U]$, we have $d_{C_l}(u_1)\geq d_G(u_1)-2\geq h-2$,  a contradiction.
Hence the assertion holds.
\end{Proof}

\begin{Lemma}\label{Lemma20}
Let $F=(\bigcup_{i=1}^k P_{2a_i})\bigcup (\bigcup_{i=1}^2 P_{2b_i+1})$ and $ h=\sum _{i=1}^{k}a_i+\sum _{i=1}^2 b_i-1\geq2$, where $k\geq0$,
 $a_1\geq \cdots \geq a_{k}\geq1$,  and $b_1\geq b_2\geq1$.
 Let $H$, i.e., $H=(X,Y;E)$, be a complete bipartite graph with partition size $|X|=h$ and $|Y|=h+2$.
\\
$(i).$ If  $k\geq1$ and  the graph $G$  is  obtained from $H$ and $P_3$ by identifying  a vertex $u\in X$ with an end vertex of $P_3$, then  $F\subseteq G$.\\
$(ii).$  If   the graph $G$  is  obtained from $H$ and  $P_4$ by identifying a vertex $u\in X$ with an end vertex of $P_4$, then  $F\subseteq G$.\\
$(iii).$  If  the graph $G$  is obtained from $H-v$ with $v\in X$ and $P_6$ by identifying $u\in X$ with an end vertex of $P_6$, then  $F\subseteq G$.\\
\end{Lemma}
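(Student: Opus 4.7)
The plan is to view $G$ as a bipartite graph in all three cases and to reduce the embedding of $F$ to a bipartition-counting problem on a complete bipartite subgraph. Since the attached path ($P_3$, $P_4$, or $P_6$) is bipartite and is glued at $u\in X$, the graph $G$ admits a natural bipartition $(A,B)$ with $A$ containing $X$ (minus $v$ in case (iii)) together with the tail vertices at even distance from $u$, and $B$ containing $Y$ together with the tail vertices at odd distance from $u$. A direct count gives $|A|=h+1$ in all three cases, while $|B|=h+3$, $h+4$, $h+5$ in cases (i), (ii), (iii) respectively. The linear forest $F$ has exactly $2h+4$ vertices, and its minimum bipartition (over the two orientations of each odd path) is $(h+1,h+3)$, realised by orienting both odd paths so that their endpoints lie on the larger part. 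Comparing this with $|A|=h+1$ forces both odd paths of $F$ to be embedded with both endpoints in $B$.

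The key auxiliary tool is the following folklore fact: every linear forest $F'$ whose bipartition $(s',t')$ satisfies $s'\le s$ and $t'\le t$ is a subgraph of $K_{s,t}$. The proof is a greedy one--place the paths of $F'$ one at a time and note that after each placement the residual graph is still complete bipartite on the unused vertices. The proof of Lemma~\ref{Lemma20} then reduces to choosing, in each case, a single path of $F$ that absorbs the attached tail (together with a possibly trivial initial segment inside $H$) and applying the folklore fact to the residual linear forest on the residual complete bipartite subgraph.

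Concretely, in case (i) I would take $P_2=w_1w_2$ when some $a_i=1$ and otherwise $P_{2a_1}=w_2w_1u y_1 x_2 y_2\cdots$; in each instance the residual $K_{h,h+2}$ or $K_{h-a_1+1,\,h-a_1+3}$ carries the residual forest because the bipartitions match exactly. Case (ii) reduces to case (i) by deleting the unused leaf $w_3$ whenever $k\ge 1$, while for $k=0$ one has $b_1\ge 2$ and the odd path $P_{2b_1+1}=w_3w_2w_1uy_1\cdots$ absorbs the $P_4$ tail. Case (iii) splits further: if $a_1\ge 3$, send $P_{2a_1}$ through the entire $P_6$ tail; if $a_1\le 2$ but $b_1\ge 3$, send $P_{2b_1+1}$ through the tail; if $b_1=2$, use $P_5=w_5w_4w_3w_2w_1$ for the long odd path; and in the remaining sub-case $b_1=b_2=1$ (so $\sum a_i=h-1$) place one $P_3$ as $w_5w_4w_3$ and the other as $y_1uy_2$ inside $H-v$, and distribute the $P_{2a_i}$'s between the residual tail edge $w_1w_2$ and the complete bipartite piece $K_{h-2,h}$.

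I expect the main obstacle to be case (iii) with $b_1=b_2=1$, since the $P_6$ tail is longer than any odd path of $F$ and the tight bipartition allows only two unused vertices of $G$; one must juggle the $P_2$'s and the leftover tail edge $w_1w_2$ carefully. Because the vertex $v\in X$ has been removed, the residual bipartite graph is only $K_{h-1,h+2}$ rather than $K_{h,h+2}$, so the folklore fact is invoked near the boundary of its applicability; a short separate sub-argument for $h=2$ handles the degenerate instance where $X\setminus\{v,u\}$ is empty.
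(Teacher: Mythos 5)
Your general strategy (fix the natural bipartition of $G$, absorb the pendant tail with a single path of $F$, then pack the residual linear forest into the residual complete bipartite graph via the greedy "folklore fact") is sound, and it is in fact a more systematic packaging of what the paper does: the paper only writes out case $(i)$, where it splits $H$ into three complete bipartite blocks $K_{\sum a_i-1,\sum a_i-1}$, $K_{b_1,b_1+1}$, $K_{b_2,b_2+1}$ and runs one long path through the tail, and then declares $(ii)$ and $(iii)$ "similar". Your treatments of case $(i)$, case $(ii)$, and the first three sub-cases of $(iii)$ all check out: the bipartition counts match and the folklore fact applies.

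However, there is a genuine gap in the last sub-case of $(iii)$, namely $b_1=b_2=1$ with $k\ge1$ and $\sum a_i=h-1$. After you place one $P_3$ as $w_5w_4w_3$ and the other as $y_1uy_2$, the vertices $w_1,w_2$ form an edge that is \emph{disconnected} from the rest of the residual graph (its only outside neighbours $u$ and $w_3$ are already used). Hence only a $P_2$ of $F$ can occupy $\{w_1,w_2\}$. If every $a_i\ge2$ --- the smallest instance is $F=P_4\bigcup 2P_3$ with $h=3$ --- then no component of $\bigcup_{i=1}^kP_{2a_i}$ fits on $w_1w_2$, and the whole even forest, whose bipartition is forced to be the balanced $(h-1,h-1)$, must go into $K_{h-2,h}$; this is impossible since one side has only $h-2$ vertices. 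The statement is still true and the repair is easy: when no $a_i$ equals $1$, consume the lower part of the tail with an even path instead, e.g.\ take $P_{2a_1}=w_2w_1uy_1x_1y_2\cdots$ (using $a_1-1$ vertices of each of $X\setminus\{v\}$ and $Y$) together with $P_3=w_5w_4w_3$; the residual forest then has bipartition $(h-a_1,h-a_1+1)$ and fits into the residual $K_{h-a_1,\,h-a_1+3}$. As written, though, your assignment for this sub-case fails, so the proof is incomplete at exactly the point you flagged as the main obstacle.
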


\begin{Proof}
$(i).$ Since $H$ is a complete bipartite graph with partition size $|X|=h$ and $|Y|=h+2$,  there exist three disjoint complete bipartite subgraphs $H_i$ of $H$, i.e.,  $H_i=(X_i, Y_i: E_i)$ for $1\leq i\leq 3$, where
 $|X_1|=|Y_1|=\sum _{i=1}^k a_i-1$  with $u\in X_1$, $|X_2|=b_1, |Y_2|=b_1+1$,  $|X_3|=b_2$, and  $|Y_3|=b_2+1$.
Hence there exists a path $P_{\sum _{i=1}^k 2a_i-2}$
  in $H_1$ with one end vertex $u$.  In addition, there exist two disjoint paths $P_{2b_1+1}$ in $H_2$ and $P_{2b_2+1}$ in $H_3$.
Hence $F\subseteq G$. So $(i)$ holds.
By a similar argument in $(i),$ it is easy to see that $ (ii)$ and $(iii)$ hold.
\end{Proof}

\begin{Lemma}\label{Lemma16}
Let $H$ be a 2-connected graph of order  $n$,  where $n\geq2h+1$ and  $2\leq h\leq3$, and
 $u_1\in V(H)$ such that  $d_H(u)\geq h$ for every $u$ different from $u_1$.
 Let $G$ be a graph obtained from $H$ and $P_t$ by identifying $u_1$ with an end vertex of $P_t$, where $3\leq t\leq4$.\\
$(i).$ If $h=2$ and  $t=3$, then $2P_2 \bigcup P_3\subseteq G$, $P_4 \bigcup P_3\subseteq G$, and $P_2 \bigcup P_5\subseteq G$.
$ (ii).$ If $h=2$ and  $t=4$, then $P_5 \bigcup P_3\subseteq G$.\\
$ (iii).$  If $h=3$ and  $t=4$, then $P_7 \bigcup P_3\subseteq G$ and $2P_5\subseteq G$.\\
\end{Lemma}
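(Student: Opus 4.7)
The natural starting point is Lemma~\ref{Lem7}: since $n \ge 2h+1 > 2h$, it produces a path $Q = u_1 x_2 x_3 \cdots x_{2h}$ of order $2h$ inside $H$ with one endpoint at $u_1$. Write $P = u_1 y_1 y_2 \cdots y_{t-1}$ for the attached pendant path and let $W := V(H) \setminus V(Q)$, which is nonempty and each of whose vertices has $H$-degree at least $h$. My plan is to exhibit each required linear forest explicitly by splitting it into at most three pieces: a piece through $u_1$ that combines $P$ with an initial segment of $Q$, a piece inside a middle segment of $Q$, and, when required, a piece attached to the tail of $Q$ that recruits a vertex $w \in W$.

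Concretely, in part~(i) (with $Q = u_1 x_2 x_3 x_4$ and $P = u_1 y_1 y_2$) I would use
\begin{align*}
 P_3 \cup 2P_2 &: \quad \{y_2 y_1 u_1,\; x_2 x_3,\; x_4 w\}, \\
 P_4 \cup P_3 &: \quad \{y_2 y_1 u_1 x_2,\; x_3 x_4 w\}, \\
 P_5 \cup P_2 &: \quad \{y_2 y_1 u_1 x_2 x_3,\; x_4 w\},
\end{align*}
where a vertex $w \in W$ adjacent to $x_4$ is supplied by the degree condition. In part~(ii) (with $P = u_1 y_1 y_2 y_3$) the single claim $P_5 \cup P_3 \subseteq G$ is obtained from $\{y_3 y_2 y_1 u_1 x_2,\; x_3 x_4 w\}$. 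In part~(iii) (with $h=3$ and $Q = u_1 x_2 \cdots x_6$) I would use
\begin{align*}
 P_7 \cup P_3 &: \quad \{y_3 y_2 y_1 u_1 x_2 x_3 x_4,\; x_5 x_6 w\}, \\
 2 P_5 &: \quad \{y_3 y_2 y_1 u_1 x_2,\; x_3 x_4 x_5 x_6 w\},
\end{align*}
again with $w \in W$ neighbouring the tail of $Q$.

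The main obstacle is the degenerate case in which no vertex of $W$ is adjacent to the required terminal $x_i$ of $Q$. Since $d_H(w) \ge h$ for every $w \in W$ and $|V(Q) \setminus \{u_1\}| = 2h-1$, such a $w$ must have multiple neighbours on $Q$, so one can reroute $Q$: if $w \sim x_i$ and $w \sim x_j$ with $i < j$, replacing the subpath $x_i \cdots x_j$ by the detour $x_i w x_j$ (or by rotating the path after insertion) yields another path of order $2h$ ending at $u_1$ in which $w$ has been exchanged with a different outside vertex. Iterating this rerouting, combined with the $2$-connectedness of $H$ that guarantees internally disjoint $u_1$--$w$ paths, exhausts the finitely many configurations in which the direct construction fails. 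The extreme subcase $n = 2h+1$ (so $|W|=1$) is settled by inspection of the very short list of admissible $H$, where the unique outside vertex is forced by its degree lower bound to neighbour the needed $x_i$.
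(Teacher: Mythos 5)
Your happy-path constructions are fine, and for part (iii) your overall strategy (take the $P_{2h}$ ending at $u_1$ from Lemma~\ref{Lem7}, prepend the pendant path, and harvest the leftover tail of $Q$ together with an outside vertex) is essentially the paper's; for parts (i) and (ii) the paper instead works with a longest cycle of $H$, which has length at least $4$ by Lemma~\ref{Lem9} and sidesteps the difficulty below. The gap is exactly where you locate ``the main obstacle'': nothing guarantees a $w\in W$ adjacent to the terminal vertex of $Q$, and the repair you sketch is not a proof. First, the claim that such a $w$ ``must have multiple neighbours on $Q$'' is false: $d_H(w)\ge h$ says nothing about where the neighbours lie, and $w$ may have at most one neighbour on $Q$ (take $H$ to be two $4$-cycles sharing an edge, $h=2$, with $Q$ chosen so that $x_4$ has both neighbours inside $Q$ and each vertex of $W$ has one neighbour on $Q$ and one in $W$; the detour $x_i\cdots x_j\mapsto x_iwx_j$ cannot even be formed). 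Second, even when the detour exists it lengthens the path, and the ``rotation'' that is supposed to restore order $2h$ while landing the new endpoint next to an outside vertex is never specified --- this is precisely the point where a genuine case analysis is needed. Third, there are not ``finitely many configurations'' to exhaust (neither $n$ nor $|W|$ is bounded), and for $n=2h+1$ there is no ``very short list of admissible $H$'': already for $h=3$ there are many $2$-connected graphs on $7$ vertices meeting the degree condition.

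The paper supplies the missing content for (iii) by fixing $P_6=u_1u_2\cdots u_6$, taking $v\notin V(P_6)$ with $N_{P_6}(v)\neq\varnothing$ (connectivity), and running a case analysis on $N_{P_6}(v)$: when $v$ meets $u_1$, $u_3$, $u_6$, or two consecutive of $u_4,u_5$, the forest is read off directly; when $N_{P_6}(v)$ is one of the few remaining small sets, the bound $d_H(v)\ge 3$ produces further neighbours of $v$ outside $P_6$, and in the last case $N_{P_6}(v)=\{u_2\}$ the $2$-connectedness of $H$ yields a second outside vertex attached elsewhere on $P_6$. Some argument of this kind (or the longest-cycle route used for (i)--(ii)) is required; as written, your construction fails for legitimate choices of $Q$ (e.g.\ $H=K_6$ plus a vertex adjacent to $\{1,2,3\}$, $u_1=6$, $Q=6,1,2,3,4,5$) and the rerouting step does not close the case.
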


\begin{Proof}
($i$).  Let $C_l$ be a longest cycle  in $H$.  Since $H$ be a 2-connected graph,  $\delta(H)\geq2$.
By Lemma~\ref{Lem9},  $l\geq 4$. Since $H$ is connected, if $l\geq5$ then  $2P_2 \bigcup P_3\subseteq G$, $P_4 \bigcup P_3\subseteq G$,
and $P_2 \bigcup P_5\subseteq G$. Since $n\geq 5$, if $l=4$  then  there exists a vertex $v\in V(H)\backslash V(C_l)$
adjacent to some vertex of $C_l$.
We have $2P_2 \bigcup P_3\subseteq G$, $P_4 \bigcup P_3\subseteq G$, and $P_2 \bigcup P_5\subseteq G$. Thus the assertion holds.

($ii$). The proof of $(ii)$ is similar to that of $(i)$ and the detail is omitted.


%

$(iii)$. By Lemma~\ref{Lem7}, $H$ has a path $P_6$ with an end vertex $u_1$. Let $P_6=
u_1u_2\cdots u_6$. Since $n\geq 7$,
there exists a vertex $v\in V(H)\backslash V(P_6)$ adjacent to some vertex of $P_6$.
If $u_1$, $u_3$, or $u_6\in N_{P_6}(v)$,  then  $P_7 \bigcup P_3\subseteq G$ and $2 P_5\subseteq G$.
Furthermore, if $\{u_4,u_5\}\subseteq N_{P_6}(v)$, then $P_7 \bigcup P_3\subseteq G$ and $2 P_5\subseteq G$.
Next  we assume  $N_{P_6}(v)\in\{\{v_2,v_4\}, \{v_2,v_5\},\{u_2\},
\{u_4\},\{u_5\}\}$.
Since $d_H(v)\geq3$, if $N_{P_6}(v)\in\{\{v_2,v_4\}, \{v_2,v_5\}\}$ then  there exists a vertex in $V(H)\backslash V({P_6})$ adjacent to $v$.
Thus $P_7 \bigcup P_3\subseteq G$ and $2 P_5\subseteq G$.
Moreover, since $d_H(v)\geq3$, if  $N_{P_6}(v)\in\{\{u_4\},\{u_5\}\}$ then there exists two distinct vertices in $V(H)\backslash V({P_6})$ adjacent to $v$. Thus $P_7 \bigcup P_3\subseteq G$ and $2 P_5\subseteq G$.
Finally, since $H$ is 2-connected, if  $N_{P_6}(v)=\{u_2\}$  then there exists a vertex in $V(H)\backslash V({P_6})$ adjacent to some vertex in $V({P_6})\backslash \{u_2\}$ ($\{u_2\}$ can not separate $V(P)\backslash \{u_2\}$ from the rest).
By repeating the arguments above,  $P_7 \bigcup P_3\subseteq G$ and $2 P_5\subseteq G$. Thus the assertion holds.
\end{Proof}

\begin{Lemma}\label{Lemma17}
Let $H$ be a 2-connected graph of order $n$ and $H\nsubseteq S_{n,2}$,  where $n\geq6$.  If a graph $G$  is  obtained from $H$ and $P_3$ by
identifying a vertex $u_1\in V(H)$ with an end vertex of  $P_3$,  then $P_5\bigcup P_3\subseteq G$.
\end{Lemma}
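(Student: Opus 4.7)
The plan is to split on whether $H - u_1$ contains a $P_5$. If it does, the attached path gives the decomposition $P_5 \cup (u_1 x y)$ of the required form in $G$. If it does not, I will absorb the attached tail into a $P_5$ of the shape $y x u_1 v w$ and locate the needed $P_3$ entirely inside $H - \{u_1, v, w\}$. The case split proceeds by examining the length $l$ of a longest cycle $C_l$ of $H$, which by Lemma~\ref{Lem9} satisfies $l \ge 4$.

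When $l \ge 6$, the set $V(C_l) \setminus \{u_1\}$ already contains $l - 1 \ge 5$ consecutive vertices of the cycle and hence a $P_5 \subseteq H - u_1$; combined with $u_1 x y$ this yields $P_5 \cup P_3 \subseteq G$. When $l = 5$ and $u_1 \notin V(C_5)$, the cycle itself gives $P_5 \subseteq H - u_1$. When $l = 5$ and $u_1 \in V(C_5)$, I would write $C_5 = u_1 v_1 v_2 v_3 v_4 u_1$ and pick $w \in V(H) \setminus V(C_5)$ (which exists as $n \ge 6$). Maximality of $C_5$ forces the neighbors of $w$ on $C_5$ to form an independent set of $C_5$; combining this with $d_H(w) \ge 2$ shows that, unless $N_H(w) = \{u_1, v_2\}$ or $N_H(w) = \{u_1, v_3\}$, there is a neighbor of $w$ in $\{v_1, v_4\}$ or outside $V(C_5) \cup \{u_1\}$, either of which lets me extend $v_1 v_2 v_3 v_4$ to a $P_5 \subseteq H - u_1$. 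In the two remaining configurations one takes the explicit decomposition $P_5 = y x u_1 v_4 v_3$ and $P_3 = v_1 v_2 w$ (with the obvious symmetric version for $N_H(w) = \{u_1, v_3\}$).

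The decisive case is $l = 4$, and this is where the hypothesis $H \not\subseteq S_{n,2}$ enters through Lemma~\ref{Lemma18}(i): since $l = 4 = 2 \cdot 2$, the set $U := V(H) \setminus V(C_4)$ cannot be independent, so there is an edge $ww' \in E(H[U])$. I then subdivide according to the position of $u_1$ (on $C_4$ or in $U$) and according to how $\{w, w'\}$ attaches to $C_4$ and to $u_1$. In each subcase one of two constructions succeeds: concatenating $ww'$ with a suitable segment of $C_4$ gives a $P_5$ inside $H - u_1$ which pairs with $u_1 x y$, or, when the attachments go only through $u_1$, the path $y x u_1 w w'$ is a $P_5$ in $G$ and the remaining three vertices of $C_4 \setminus \{u_1\}$ supply the $P_3$. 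The $2$-connectedness of $H$ is the key tool here, as it rules out pathological cut-vertex configurations (for example, a single vertex $v_i$ of $C_4$ separating $U$ from the rest of the graph) which would otherwise block the construction. The main obstacle is precisely this $l = 4$ analysis: the cycle alone is too short to yield a $P_5$, and one must coordinate $C_4$, the edge $ww'$ furnished by the non-$S_{n,2}$ hypothesis, and the attached tail $xy$ simultaneously; the $l = 5$ case with $u_1$ on the cycle is a simpler instance of the same template.
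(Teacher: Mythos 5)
Your outline follows the same skeleton as the paper's proof (case analysis on the length $l$ of a longest cycle of $H$, with $l\geq 4$ from Lemma~\ref{Lem9}, the cases $l\geq 6$ and $l=5$ dispatched by either finding $P_5\subseteq H-u_1$ and pairing it with the tail $u_1xy$, or absorbing the tail into $yxu_1vw$), and your use of Lemma~\ref{Lemma18}(i) to convert the hypothesis $H\nsubseteq S_{n,2}$ directly into an edge of $H[U]$ when $l=4$ is a legitimate and rather cleaner entry point than the paper's, which instead re-derives the $S_{n,2}$-structure by hand in the independent-$U$ subcase. However, the decisive $l=4$ analysis is only asserted, and the asserted dichotomy is not exhaustive as stated. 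The component $D$ of $H[U]$ containing the edge $ww'$ may meet $C_4$ in a single vertex $c$ with $c\neq u_1$ and $u_1$ the vertex of $C_4$ opposite $c$: then concatenating $ww'$ with a segment of $C_4$ gives only paths of the form $w'wcab$ with $b=u_1$, so no $P_5$ in $H-u_1$ arises this way, and $yxu_1ww'$ is not a path either since $u_1w\notin E$. In that situation one must prove the configuration cannot occur, which is exactly the paper's Subcase 3.2: first $P_3\nsubseteq H[U]$ (else $yxu_1$ extended two steps along $C_4$ is a $P_5$ disjoint from that $P_3$), then Lemma~\ref{Lemma19}(i) together with the maximality of $C_4$ forces $N_{C_4}(w)=N_{C_4}(w')=\{c\}$, and $c$ is then a cut vertex, contradicting $2$-connectivity. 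You gesture at this (``pathological cut-vertex configurations'') but do not carry it out, and this verification, together with the enumeration showing no other attachment pattern survives, is the actual content of the lemma.

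There is also a small slip in your $l=5$ case: when $N_{C_5}(w)\subseteq\{u_1\}$ and $w$ has a neighbor $w'$ outside $V(C_5)\bigcup\{u_1\}$, that neighbor does \emph{not} let you extend $v_1v_2v_3v_4$ to a $P_5\subseteq H-u_1$, since neither $w$ nor $w'$ touches $\{v_1,v_2,v_3,v_4\}$; you must switch to the other template, $P_5=w'wu_1xy$ with $P_3=v_1v_2v_3$, which is precisely how the paper's Case 2 handles it. These are repairable, but as written the proof of the hardest case is a claim rather than an argument.
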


\begin{Proof}
Suppose  $P_5\bigcup P_3\nsubseteq G$.
Let $C_l$ be a longest cycle  in $H$.   Since $H$ is 2-connected, ¡¡$\delta(H)\geq2$.
By Lemma~\ref{Lem9}, $l\geq 4$.

\textbf{Case 1}: $l\geq 6$, or $l=5$  with $u_1\notin V(C_l)$.
According to the structure of $G$, we have $P_5\bigcup P_3\subseteq G$, a contradiction.

\textbf{Case 2}: $l=5$  with $u_1\in V(C_l)$.
Since  $n\geq 6$, there exists a vertex $u\in V(H)\backslash V(C_l)$ adjacent to some vertex of $C_l$.
Furthermore,  since $P_5\bigcup P_3\nsubseteq G$,  we have $N_{C_l}(u)=\{u_1\}$.
Moreover,  since $\delta(H)\geq2$, there exists a vertex in $V(H)\backslash V(C_l)$  adjacent to $u$. Hence $P_5\bigcup P_3\subseteq G$, a contradiction.

 {\bf Case 3:} $l=4$.  There exists a shortest path $P_k$  between $u_1$ and  some vertex of $C_l$
such that  $|V(P_k)\bigcap V(C_l)|=1$.
 Since $P_5\bigcup P_3\nsubseteq G$, we have $1\leq k\leq 2$.

 {\bf Subcase 3.1:} $k=2$. Let $C_4=v_1v_2v_3 v_4v_1$, $V(C_4)\bigcap V(P_2)=\{v_1\}$,  and $U=V(H)\backslash (V(C_4)\bigcup\{u_1\})$.
 It is easy to see  that $U\neq \varnothing$.
First we assume  that  $U$ is an independent set. Since  $C_4$ is a  longest cycle in $H$ and $\delta(H)\geq2$,
$N_H(u)\in\{\{u_1,v_1\},\{v_1,v_3\},\{v_2,v_4\}\}$ for every $u\in U$. However, in each case, $P_5\bigcup P_3\subseteq G$,
 a contradiction.
 Next we assume   that  $H[U]$ contains at least one edge. Then there exists one edge  $w_1w_2$ such that one of $w_1$ and $w_2$ is adjacent to some vertex in $V(C_4)\bigcup \{u_1\}$
 and therefore $P_5\bigcup P_3\subseteq G$, a contradiction.

 {\bf Subcase 3.2}:  $k=1$. Obviously $u_1\in V(C_4)$. Let $C_4=u_1u_2u_3 u_4u_1$ and $U=V(H)\backslash V(C_4)$.  Since $n\geq6$,
 we have $|U|\geq2$. First we assume  that $U$ is  an independent set.  Since  $C_4$ is a longest cycle and $\delta(H)\geq2$, either $N_H(v)=\{u_1,u_3\}$ or $N_H(v)=\{u_2,u_4\}$ for
    $v\in U$.    We claim that   $N_H(v)=\{u_1,u_3\}$ for all $v\in U$.  Otherwise $P_5\bigcup P_3\subseteq G$, a contradiction.
  Moreover, since $P_5\bigcup P_3\nsubseteq G$, it follows that $u_2u_4\notin E(G)$.
  Hence $H\subseteq S_{n,2}$, a contradiction.  Next we assume that  $H[U]$ contains at least one edge $v_1v_2$.
Furthermore,  since $P_5\bigcup P_3\nsubseteq G$,   we have $P_3\nsubseteq H[U]$.
 By Lemma~\ref{Lemma19}~(i),  $N_{C_4}(v_1)=N_{C_4}(v_2)$.  Since $C_4$ is a longest cycle,
$d_{C_4}(v_1)=d_{C_4}(v_2)=1$.
This discussion implies that the unique common neighbor  of $v_1$ and $v_2$  in $V(C_4)$ is a cut vertex of $H$, which  contradicts that  $H$ is 2-connected.
 Thus the assertion holds.
\end{Proof}

\section{Proofs of Theorems~\ref{Thm1} and \ref{Thm4}}

Now we are ready to prove Theorem~\ref{Thm1}, i.e.,

\begin{Theorem}\label{Thm1-rep}
Let $F=\bigcup_{i=1}^k P_{2a_i}$ and $ h=\sum _{i=1}^k a_i-1\geq1$, where  $k\geq2$ and
$a_1\geq \cdots \geq a_{k}\geq1$. Let $G$ be a  connected graph of order $n$, where $n\geq2h+2$. If  $\delta(G)\geq h$,
 then $F\subseteq G$, unless one of the following holds:\\
 (i). $G\subseteq S_{n,h}$;\\
  (ii). $F=2P_{2a_1}$ and $G=L_{t,h}$, where $n=th+1$.
\end{Theorem}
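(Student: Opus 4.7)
The plan is to reduce Theorem~\ref{Thm1} to the Ali--Staton theorem (the single-even-path version cited earlier in the excerpt) by observing that any linear forest $F = \bigcup_{i=1}^k P_{2a_i}$ with $\sum 2a_i = 2h+2$ is already a subgraph of the single path $P_{2h+2}$. Indeed, writing $P_{2h+2} = v_1 v_2 \cdots v_{2h+2}$ and $s_j = 2\sum_{i<j} a_i$, the $k$ consecutive blocks $v_{s_j+1} v_{s_j+2} \cdots v_{s_j+2a_j}$ for $1 \le j \le k$ are pairwise vertex-disjoint, and their union is a copy of $F$. Hence once $P_{2h+2} \subseteq G$ is established, $F \subseteq G$ follows immediately.

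Applying the Ali--Staton theorem to $G$, which is legitimate since $\delta(G) \ge h$ and $n \ge 2h+2$, one of three alternatives holds: $P_{2h+2} \subseteq G$, in which case $F \subseteq G$ by the reduction above; $G \subseteq S_{n,h}$, which is conclusion~(i); or $G = L_{t,h}$ with $n = th+1$. Only the last alternative requires dedicated analysis.

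To handle $G = L_{t,h}$, I would exploit that its center $c$ is a cut vertex with $L_{t,h} - c \cong t K_h$, so every path either lies inside a single copy of $K_{h+1}$ (hence has order at most $h+1$) or passes through $c$; since $c$ can lie in at most one component of a linear forest, at most one component of $F$ has order exceeding $h$. When $F = 2P_{2a_1}$ with $2a_1 = h+1$, both components would need to contain $c$, which is impossible---this verifies that conclusion~(ii) is tight. Otherwise either $k \ge 3$, or $k = 2$ with $a_1 > a_2$, and a brief check using $\sum a_i = h+1$ shows $2a_i \le h$ for every $i \ge 2$. I would then embed $P_{2a_1}$ either inside one arm (if $2a_1 \le h$) or through $c$ by splitting its $2a_1 - 1$ non-center vertices between two arms (possible because $2a_1 \le 2h$), and distribute the remaining paths, each of order at most $h$, into the residual arm capacity by a greedy bin-packing argument. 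The bound $t \ge 3$, forced by $n = th+1 \ge 2h+2$, guarantees sufficient room, and the main obstacle---ensuring that whole paths can be packed without being split across arms---boils down to a short finite case check according to whether $2a_1 \le h$ or $2a_1 \ge h+1$.
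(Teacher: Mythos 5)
Your proposal is correct, but it takes a different route from the paper's. You invoke the Ali--Staton theorem (Theorem 1.4 of the paper) as a black box to get the trichotomy $P_{2h+2}\subseteq G$, $G\subseteq S_{n,h}$, or $G=L_{t,h}$, whereas the paper re-derives that structure from scratch: for $2$-connected $G$ it takes a longest cycle, uses Dirac's lemma to force $l=2h$, and applies its Lemma 2.4(i) to get $G\subseteq S_{n,h}$; for $G$ with a cut vertex it shows every end block has order exactly $h+1$ and that all blocks share one cut vertex, yielding $G=L_{t,h}$. Your reduction $F\subseteq P_{2h+2}$ (partitioning the path into consecutive segments) is exactly the paper's opening observation, stated in contrapositive form as $F\nsubseteq G\Rightarrow P_{2h+2}\nsubseteq G$. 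Your approach buys brevity and reuses a cited result; the paper's buys self-containment and sets up lemmas reused in its later theorems. In the residual case $G=L_{t,h}$ your analysis is sound but could be streamlined: rather than a bin-packing case check, note (as the paper does) that $t\geq 3$ gives $P_h\bigcup P_{2h+1}\subseteq L_{t,h}$, and that whenever $k\geq3$ or $a_1>a_2$ one has $2a_k\leq h$ and $\sum_{i=1}^{k-1}2a_i\leq 2h+1$, so $F\subseteq P_h\bigcup P_{2h+1}$ by the same consecutive-segment embedding; this disposes of all non-exceptional subcases in one stroke. Your observation that $F=2P_{2a_1}$ with $2a_1=h+1$ forces both components through the center of $L_{t,h}$, which is impossible, correctly confirms that exception (ii) is genuine.
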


\begin{Proof}
Suppose $F\nsubseteq G$.
Since $\sum_{i=1}^{k}2a_i=2h+2$,  we have $P_{2h+2}\nsubseteq G$.
We consider the following two cases.

{\bf Case 1:}     $G$ is 2-connected. It follows that  $\delta (G)\geq2$.
Let $C_l$ be a longest cycle   in $G$
and  $U=V(G)\backslash V(C_l)$. By Lemma~\ref{Lem9},  $l\geq 4$.
First we claim that $h\geq 2$. In fact,
  if  $h=1$, then $F=2P_2$ and thus $F\subseteq G$, a contradiction.
By Lemma~\ref{Lem9},  $h\geq 2$ and therefore  $l\geq2h$.    Since  $n\geq2h+2$ and $P_{2h+2}\nsubseteq G$,
 we have $l\leq 2h$  and therefore $l=2h$.
Moreover, since $P_{2h+2}\nsubseteq G$, $U$ is an independent set.
By Lemma~\ref{Lemma18} $(i)$,  $G\subseteq S_{n,h}$.

{\bf Case 2:}   $G$ has at least one cut vertex.
Since $k\ge 2$, if $h=1$ then  $F=2P_2$.  Since $F\nsubseteq G$, $G$ must be a star $K_{1, n-1}$, i.e., $G=L_{n-1,1}$.
Next we assume  that $h\ge 2$.  Since $G$ has at least one cut vertex, there exist at least two end blocks.
We claim that $|V(B)|=h+1$ for every end  block  $B$ of $G$.  In fact,   since  $\delta(G)\ge h$, it follows that $|V(B)|\geq h+1$. Furthermore,
since $P_{2h+2} \nsubseteq G$, Lemma~\ref{Lem7} implies that $B$ has order at most $h+1$ and hence  $|V(B)|=h+1$.
Let $B_i$ be an end block of $G$ with a vertex $u_i$ which is a cut vertex of $G$ for $1\leq i\leq2$. It follows that $|V(B_i)|=h+1$ for $1\leq i\leq2$.
If there exists a path $P_k$ with $k\geq 2$ starting at  $u_1$
and ending at $u_2$  such that  $V(P_k)\bigcap (V(B_1)\bigcup V(B_2))=\{u_1, u_2\}$,
then  Lemma~\ref{Lem7} implies $P_{2h+2}\subseteq G$, a contradiction.
It follows that $G=L_{t,h}$, where $n=th+1$. Moreover, since  $ n\geq 2h+2$, $G$ has at least three blocks, i.e., $t\geq3$. By Lemma~\ref{Lem7},
 $P_h\bigcup P_{2h+1}\subseteq G$.
 We claim that  $k=2$ and $a_1=a_2$.
Otherwise, since
$$h=\sum \limits_{i=1}^{k}a_i-1\geq 2 a_k$$
and
$$2h+1=\sum \limits_{i=1}^{k}2a_i-1\geq \sum \limits_{i=1}^{k-1}2a_i, $$
 we have $F\subseteq G$, a contradiction.
Therefore $F=2P_{2a_1}$ and $G=L_{t,h}$, where $n=th+1$.
\end{Proof}

  Theorem~\ref{Thm4} can be stated as follows.

\begin{Theorem}\label{Thm4-rep}
Let $F=(\bigcup_{i=1}^k P_{2a_i})\bigcup P_{2b_1+1}$ and  $ h=\sum _{i=1}^k a_i+b_1-1\geq1$, where  $k\geq1$,
$a_1\geq \cdots \geq a_{k}\geq1$, and $b_1\geq1$. Let $G$ be a connected graph of order $n$, where $n\geq2h+3$. If  $\delta(G)\geq h$,
then $F\subseteq G$, unless one of the following holds:\\
(i). $G\subseteq S_{n,h}$;\\
(ii).  $F=P_6\bigcup P_3$ and $G\subseteq K_2\bigvee \frac{n-2}{2}K_2$, where $n$ is even;\\
 (iii). $F\in \{P_{2b_1}\bigcup P_{2b_1+1}, P_{2b_1+2}\bigcup P_{2b_1+1}\}$ and $G=L_{t,h}$, where  $n=th+1$.
\end{Theorem}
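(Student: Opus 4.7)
Since $F$ has exactly $\sum_{i=1}^{k}2a_i+(2b_1+1)=2h+3$ vertices, any $P_{2h+3}$ in $G$ can be cut into consecutive subpaths realizing the component sizes of $F$, so $F\subseteq P_{2h+3}$. Therefore, once we assume $F\nsubseteq G$, we also have $P_{2h+3}\nsubseteq G$, and the goal reduces to a structural classification of $G$. The case $h=1$ collapses to $F=P_2\bigcup P_3$, which is handled by a short direct argument forcing $G\subseteq S_{n,1}$ (exception (i)). So assume $h\geq2$, and split on whether $G$ is 2-connected.

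\textbf{Case 1 ($G$ is 2-connected).} Let $C_l$ be a longest cycle of $G$. By Lemma~\ref{Lem9}, $l\geq 2h$. If $l\geq 2h+2$, then since $n\geq 2h+3$ there is a vertex $u\notin V(C_l)$ with a neighbor $v_i\in V(C_l)$, and attaching $u$ to a $P_{2h+2}$ along $C_l$ starting at $v_i$ gives $P_{2h+3}\subseteq G$, contradiction. Hence $l\in\{2h,2h+1\}$. When $l=2h+1$, Lemma~\ref{Lemma18}(ii) gives $G\subseteq S_{n,h}^{+}$; a direct embedding using the $K_h$ part together with the extra edge $v_1v_2$ yields $F\subseteq G$ whenever that extra edge is actually present in $G$, while otherwise $G\subseteq S_{n,h}$, which is exception (i). When $l=2h$ and $U:=V(G)\setminus V(C_l)$ is independent, Lemma~\ref{Lemma18}(i) already gives $G\subseteq S_{n,h}$. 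The delicate subcase is $l=2h$ with $G[U]$ containing an edge $u_1u_2$: Lemma~\ref{Lemma19}(i) forces $N_{C_l}(u_1)=N_{C_l}(u_2)$, and a careful combinatorial argument (using the longest-cycle bound, the tight condition $\delta(G)\geq h$, and iterating Lemma~\ref{Lemma19}(i) across all edges of $G[U]$) shows that $G$ has exactly two ``central'' vertices joined to every other vertex, and that the remaining vertices pair off into matching edges. This yields $G\subseteq K_2\bigvee\frac{n-2}{2}K_2$; a direct inspection then shows $F\subseteq G$ unless $F=P_6\bigcup P_3$ (which requires $h=3$), giving exception (ii).

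\textbf{Case 2 ($G$ has a cut vertex).} The structure here runs parallel to Case 2 of Theorem~\ref{Thm1}. Each end block $B$ of $G$ satisfies $|V(B)|\geq h+1$ because $\delta(G)\geq h$, and Lemma~\ref{Lem7} applied inside $B$ combined with a short path in the remainder of $G$ shows $|V(B)|\leq h+1$; otherwise one builds $P_{2h+3}\subseteq G$. Similarly, one argues that the distance between the unique cut vertices of any two end blocks, along $G$ after removing their interiors, is exactly one, which forces $G=L_{t,h}$ with $n=th+1$. It remains to determine when $F\subseteq L_{t,h}$. Any path in $L_{t,h}$ with more than $h$ vertices must use the unique central vertex, and only one disjoint subpath can claim it; therefore $F\nsubseteq L_{t,h}$ iff $F$ has at least two components of order $>h$. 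Combined with $\sum_{i=1}^k 2a_i+(2b_1+1)=2h+3$, $a_i\geq 1$ and $b_1\geq 1$, this forces $k=1$ and $F\in\{P_{2b_1}\bigcup P_{2b_1+1},\,P_{2b_1+2}\bigcup P_{2b_1+1}\}$, which is exception (iii).

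\textbf{Main obstacle.} The technical heart of the argument is the $l=2h$, $G[U]$ not independent subcase of Case 1: one must iteratively apply the shared-neighborhood lemma (Lemma~\ref{Lemma19}(i)) over every edge in $U$, and fuse it with the minimum-degree lower bound and the longest-cycle constraint to force the global product structure $G\subseteq K_2\bigvee\frac{n-2}{2}K_2$. Once that is in hand, identifying $F=P_6\bigcup P_3$ as the unique $F$ that fails to embed is a clean length and parity check: any $P_6$ must use both ``centers'', after which no $P_3$ can survive in the remaining matching. The remaining parts of the proof are routine adaptations of techniques from the proof of Theorem~\ref{Thm1} and of the odd-path extension step in the cited Yuan-Nikiforov theorem.
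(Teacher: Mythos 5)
Your overall route (reduce $F\nsubseteq G$ to $P_{2h+3}\nsubseteq G$, split on $2$-connectivity, longest-cycle analysis via Lemmas~\ref{Lem9}, \ref{Lemma18}, \ref{Lemma19} in the $2$-connected case, end-block analysis otherwise) is the same as the paper's, and your list of exceptional structures is correct. But Case~2 has a genuine gap. You claim the cut vertices of any two end blocks must coincide, ``parallel to Case 2 of Theorem~\ref{Thm1}''; that parallel argument only forbids configurations creating a $P_{2h+2}$, whereas here only $P_{2h+3}$ is forbidden. Consequently the configuration in which two end blocks of order $h+1$ have \emph{adjacent} cut vertices (for instance two graphs of type $L_{s,h}$ joined by an edge between their centers --- exactly exception (iv) of the Yuan--Nikiforov theorem) survives your argument: it has minimum degree $h$, a cut vertex, and no $P_{2h+3}$, yet it is not $L_{t,h}$ and is not listed among your exceptions. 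To kill it you must show $F$ embeds into it; the paper does this by first proving $P_{h+1}\bigcup P_{2h+1}\nsubseteq G$ (its Claim~1), which itself requires the arithmetic case split on $\sum_i a_i$ versus $b_1$ and uses $k\ge 1$ essentially. Without this step your classification in Case~2 is incomplete.

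A second, lesser issue: in the subcase $l=2h$ with $G[U]$ containing an edge, the entire content is the counting step you leave implicit. For an endpoint $u_1$ of an edge of $G[U]$ one has $d_{C_{2h}}(u_1)\ge h-1$, while consecutive members of $N_{C_{2h}}(u_1)$ lie at distance at least $3$ along the cycle, forcing $3(h-1)\le 2h$, i.e.\ $h\le 3$; then $h=2$ is excluded because the common neighbour of $u_1,u_2$ on $C_4$ would be a cut vertex, contradicting $2$-connectivity. Only after this does the ``two central vertices'' structure $G\subseteq K_2\bigvee\frac{n-2}{2}K_2$ make sense; asserting it for general $h$ is not justified. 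Your Subcase $l=2h+1$ is fine in substance, though the branch ``otherwise $G\subseteq S_{n,h}$'' cannot occur: $C_{2h+1}\nsubseteq S_{n,h}$, so the extra edge of $S^+_{n,h}$ is always present and this subcase always yields $F\subseteq G$, as the paper shows by an explicit construction of two disjoint paths along the cycle.
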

\begin{Proof}
Suppose  $F\nsubseteq G$.
Since $\sum _{i=1}^{k}2a_i+2b_1+1=2h+3$,  we have $P_{2h+3}\nsubseteq G$.
 We consider the following two cases.

{\bf Case 1:} $G$ is 2-connected. It follows that  $\delta (G)\geq2$. Let $C_l$, denoted by $v_1v_2\cdots v_lv_1$,  be a longest cycle  in $G$ and
  $U=V(G)\backslash V(C_l)$.   By Lemma~\ref{Lem9},  $l\geq 4$.  We claim  that $h\ge 2$. In fact, if $h=1$, then $F=P_2\bigcup P_3$.
Since  $n\geq 5$ and $l\geq 4$,  we have $F\subseteq G$, a contradiction.
By Lemma~\ref{Lem9},  $l\geq2h$. Furthermore,  since   $n\geq 2h+3$
and $P_{2h+3}\nsubseteq G$,  we have $l\leq 2h+1$.
So $2h\le l\le 2h+1$.  We consider the following two subcases.

\textbf{Subcase 1.1:} $l=2h$. Since $P_{2h+3}\nsubseteq G$, we have $P_3\nsubseteq G[U]$.  If $U$ is an independent set, then Lemma~\ref{Lemma18}~$(i)$ implies  $G\subseteq S_{n,h}$.
Now  we assume that $G[U]$ consists of $p$ disjoint edges and $q$ isolated vertices, i.e., $u_1u_2, u_3u_4, \ldots, u_{2p-1}u_{2p}, w_1, \ldots, w_q$,
 where $p\ge 1$ and $q\ge 0$.  By Lemma~\ref{Lemma19}~$(i)$,  $N_{C_{2h}}(u_{2i-1})=N_{C_{2h}}(u_{2i})$ for $1\leq i\leq p$.
Note that $d_{C_{2h}}(u_j)=d_G(u_j)-1\geq h-1$ for $1\leq j\leq 2p$.
Since $C_{{2h}}$ is  a longest cycle in $G$, the distance along $C_{{2h}}$ between  two vertices in $N_{C_{2h}}(u_1)$ is at least 3, which implies $3(h-1)\leq2h$.
Thus $h\leq3$.
Furthermore,  $h=3$ (Otherwise $h=2$, which implies $d_{C_4}(u_i)=1$ and $d_G(u_i)=2$ for $1\leq i\leq 2$. Thus the common neighbor of $u_1$ and $u_2$ in $V(C_4)$
is a cut vertex, which contradicts that $G$ is 2-connected).
Now let $\mathcal{A}=\{P_6\bigcup P_3, P_4\bigcup P_5,P_2\bigcup P_7, P_4\bigcup P_2\bigcup P_3, 2P_2\bigcup P_5, 3P_2\bigcup P_3\}$.
 Since $h=3$ and $k\geq1$, we have $F\in \mathcal{A}$.
Moreover, $d_{C_6}(u_{2i-1})=d_{C_6}(u_{2i})=2$ and $d_G(u_{2i-1})=d_G(u_{2i})=3$ for $1\leq i\leq p$.
  Since $C_6$ is a longest cycle in $G$, we assume without loss of generality that $N_{C_6}(u_1)=N_{C_6}(u_2)=\{v_1,v_4\}$.
 Since $F\nsubseteq G$, we have $N_{C_6}(u_{2i-1})=N_{C_6}(u_{2i})=\{v_1, v_4\}$ for $1\leq i \leq p$.  Next we claim that $q=0$.
 In fact, if $q\ge 1$, then  $d_G(w_1)=d_{C_6}(w_1)=3$, which implies   $F\subseteq G$, a contradiction.
So $n$ must be even. Hence $G\subseteq K_2\bigvee \frac{n-2}{2}K_2$.
Moreover, $F=P_6\bigcup P_3 $.
 The assertion holds.

\textbf{Subcase 1.2:}  $l=2h+1$.
By  the proof of Lemma~\ref{Lemma18} $(ii)$, we have $N_{C_{2h+1}}(u)=\{v_2, v_4, \ldots, v_{2h}\}$  for every $u\in U$.
 Hence there exist two disjoint paths $uv_2v_3\cdots v_{2b_1+1}$  and
$wv_{2b_1+2}\cdots v_{2h+1}v_1$ for two different vertices $u, w\in  U$.  Hence $F\subseteq G$ and it is a contradiction.

{\bf Case 2:}   $G$ has at least one cut vertex.  If $h=1$, then
$F=P_2\bigcup P_3$. Since  $F\nsubseteq G$,  $G$ must be a star $K_{1, n-1}$, i.e., $G=L_{n-1, 1}$.  Now we assume that
$h\ge 2$. First we prove the following three Claims.


\textbf{Claim 1}: $P_{h+1}\bigcup P_{2h+1}\nsubseteq G$.


Suppose  $P_{h+1}\bigcup P_{2h+1}\subseteq G$.
If $\sum_{i=1}^{k}a_i\geq b_1+1$,  then
 $$h+1=\sum \limits_{i=1}^{k}a_i+b_1\geq 2b_1+1$$
 and
  $$2h+1\geq \sum \limits_{i=1}^{k}2a_i+2b_1-1\geq \sum \limits_{i=1}^{k}2a_i.$$
Thus $F \subseteq G$, a contradiction.
If  $\sum _{i=1}^{k}a_i\leq b_1$, then
 $$h+1=\sum \limits_{i=1}^{k}a_i+b_1\geq \sum \limits_{i=1}^{k}2a_i$$
 and
  $$2h+1\geq \sum \limits_{i=1}^{k}2a_i+2b_1-1\geq 2b_1+1.$$
 Thus $F \subseteq G$, a contradiction. So Claim 1 holds.


\textbf{Claim 2}:  $\large|V(B)\large|= h+1$  for every end block $B$ of $G$.


 Since $\delta(G)\ge h$,   it follows that  $|V(B)|\ge h+1$ for every end block $B$ of $G$. Next we  prove that $\large|V(B)\large|\leq h+1$ for every end block $B$ of $G$.
Suppose that there exists  an end block $B_1$ of order at least $h+2$.
If there exists another end block $B_2$ such that $V(B_1)\bigcap V(B_2)=\varnothing$,  then  Lemma~\ref{Lem7} implies  $P_{2h+3}\subseteq G$,
 a contradiction.
If there exist  another two end blocks $B_2$ and $B_3$ such that $B_1, B_2,$ and  $B_3$ share a common cut vertex $u$ of $G$, then Lemma~\ref{Lem7} implies that
 there exist three paths $P_{h+2}$,
$P_{h+1}$, and $P_{h+1}$ in $B_1, B_2$ and $B_3$ with an end vertex $u$, respectively.
 Hence $P_{h+1}\bigcup P_{2h+1}\subseteq G$, a contradiction.
These discussions imply that there are exactly two blocks $B_1$ and $B_2$ sharing one common cut vertex $u$ of $G$,  where $|V(B_1)|\ge h+2$ and $|V(B_2)|=n-|V(B_1)|+1$.  By  Lemma~\ref{Lem7},
there exist two paths $P_{l_1}$  and $P_{l_2}$  with an end vertex $u$ in $B_1$ and $B_2$, respectively, where $l_1\geq \min\{|V(B_1)|, 2h\}$ and $l_2\geq \min\{|V(B_2)|, 2h\}$.
Hence $P_{2h+3}\subseteq G$, a contradiction. Therefore Claim 2 holds.

\textbf{Claim 3}: $G=L_{t,h}$,  where $n=th+1$ and $F\in \{P_{2b_1}\bigcup P_{2b_1+1}, P_{2b_1+2}\bigcup P_{2b_1+1}\}$.

Choose  two end blocks  $B_1$ and $B_2$  of $G$  such that
  $p(B_1, B_2)$  is as large as possible, where  $p(B_1, B_2)$ is
  the order of a longest path between  the unique cut vertex $u_1$ of $G$ in $V(B_1)$ and  the unique cut vertex $u_2$ of $G$ in $V(B_2)$.
By Claim~2, $ |V(B_1)|= |V(B_2)|=h+1$.
If $p(B_1, B_2)\geq 3$, then  Lemma~\ref{Lem7} implies  $P_{2h+3}\subseteq G$, a contradiction.
Since $n\geq 2h+3$, if $ p(B_1, B_2)=2$ then there exists another end block $B_3$ of $G$,
sharing a common vertex $u_1$ with $B_1$ or a common vertex $u_2$ with $B_2$. By Lemma~\ref{Lem7},  $P_{h+1}\bigcup P_{2h+1}\subseteq G$,
 which contradicts Claim 1.
Hence all blocks share  a common cut vertex of $G$, i.e.,  $G=L_{t,h}$, where $n=th+1$.
Moreover,  $P_h\bigcup P_{2h+1}\subseteq G $,  $P_{h-1}\bigcup P_h \bigcup P_{h+2}\subseteq G$, and
$P_h\bigcup P_h \bigcup P_{h+1}\subseteq G$.
If $\sum _{i=1}^{k}a_i\geq b_1+2$, then
 $$h=\sum \limits_{i=1}^{k}a_i+b_1-1\geq 2b_1+1,$$
 and
  $$2h+1= \sum \limits_{i=1}^{k}2a_i+2b_1-1\geq\sum \limits_{i=1}^{k}2a_i.$$
Hence $F \subseteq G$, a contradiction.
 If  $\sum _{i=1}^{k}a_i\leq b_1-1$, then
 $$h=\sum \limits_{i=1}^{k}a_i+b_1-1\geq \sum \limits_{i=1}^{k}2a_i,$$
 and
  $$2h+1=\sum \limits_{i=1}^{k}2a_i+2b_1-1\geq 2b_1+1.$$
 Hence $F \subseteq G$, a contradiction.
 If $\sum _{i=1}^{k}a_i= b_1$ and $k\geq 2$, then
 $$h-1=\sum \limits_{i=1}^{k}2a_i-2\geq 2a_k,$$
 $$h=\sum \limits_{i=1}^{k}2a_i-1> \sum \limits_{i=1}^{k-1}2a_i,$$
 and
 $$h+2=\sum \limits_{i=1}^{k}a_i+b_1+1=2b_1+1.$$
 Hence $F \subseteq G$, a contradiction.
 If $\sum _{i=1}^{k}a_i= b_1+1$ and $k\geq 2$, then
 $$h=\sum \limits_{i=1}^{k}2a_i-2\geq \sum \limits_{i=1}^{k-1}2a_i,$$
 $$h=\sum \limits_{i=1}^{k}2a_i-2\geq 2a_k,$$
 and
 $$h+1=\sum \limits_{i=1}^{k}a_i+b_1=2b_1+1.$$
  Hence  $F \subseteq G$, a contradiction.
So we have $F\in \{P_{2b_1}\bigcup P_{2b_1+1}, P_{2b_1+2}\bigcup P_{2b_1+1}\}$ and $G=L_{t,h}$, where $n=th+1$.
\end{Proof}

\section{Proofs of Theorems~\ref{Thm5} and \ref{Thm2}}

We will use the next several lemmas in  Theorem~\ref{Thm5}.
\begin{Lemma}\label{101}
Let $G$ be a connected graph of order $n$.\\
(i). If   $\delta(G)\geq 1$ and  $n\geq6$, then $2P_3\subseteq G$, unless either $G=U_{3,1}$, or
$G\subseteq L_{t_1,t_2,1,2}$ for  $n=t_1+2t_2+1$.\\
  (ii). If    $\delta(G)\geq 2$ and $n\geq8$, then $P_5\bigcup P_3\subseteq G$, unless $G\subseteq S_{n,2}^+$,
 $G\subseteq H_n^1$, $G\subseteq H_n^2$, or $G=L_{t,2}$ for $n=2t+1$.\\
(iii).  If    $\delta(G)\geq 2$ and $n\geq8$,  then $P_2\bigcup 2P_3\subseteq G$, unless  either $G\subseteq S_{n,2}$, or $G=L_{t,2}$ for  $n=2t+1$.
\end{Lemma}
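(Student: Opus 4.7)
The plan is to prove parts $(i)$, $(ii)$, $(iii)$ of Lemma~\ref{101} uniformly by splitting on whether $G$ is $2$-connected: in the $2$-connected case one analyzes the longest cycle of $G$ via Lemmas~\ref{Lem9}, \ref{Lemma18}, \ref{Lemma19}; in the cut-vertex case one decomposes $G$ into blocks and applies Lemma~\ref{Lem7}, exactly as in the proofs of Theorems~\ref{Thm1} and~\ref{Thm4}.

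For the $2$-connected case, let $C_\ell$ be a longest cycle of $G$ and $U=V(G)\setminus V(C_\ell)$. Since $G$ is $2$-connected, $\delta(G)\geq 2$ and Lemma~\ref{Lem9} yields $\ell\geq 4$ in all three parts. A short extension argument shows that when $\ell$ is large enough, the cycle together with any vertex in $U$ (which exists by $n\geq 6$ in part~$(i)$ and $n\geq 8$ in parts~$(ii), (iii)$) already embeds the forbidden linear forest; this restricts $\ell$ to $\{4,5\}$ in part~$(i)$ and to $\{4,5,6\}$ in parts~$(ii), (iii)$. In each remaining subcase, Lemma~\ref{Lemma18} pins down the attachment of $U$ to $C_\ell$ when $U$ is independent, while Lemma~\ref{Lemma19}$(i)$ handles the situation when $G[U]$ contains an edge. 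A routine case analysis then either embeds the forbidden forest, contradicting our assumption, or places $G$ into one of $S_{n,2}$ or $S^+_{n,2}$.

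For the cut-vertex case, choose two end blocks $B_1$ and $B_2$ of $G$ with unique cut vertices $u_1$ and $u_2$ respectively. Lemma~\ref{Lem7} gives $|V(B_i)|\geq h+1$; combining $F$-freeness with Lemma~\ref{Lem7} applied to a longest path from $u_1$ to $u_2$ through internal blocks forces $|V(B_i)|=h+1$ and rules out any internal block of order $\geq h+2$. For parts~$(ii)$ and~$(iii)$ this yields $G=L_{t,2}$ with $n=2t+1$, except that in part~$(ii)$ the additional possibility of an internal $2$-connected $S_{m,2}$-block meeting one or two triangular end blocks at a common cut vertex produces exactly $H_n^1$ and $H_n^2$. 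For part~$(i)$, end blocks may be $K_2$ or $K_3$, and the $2P_3$-free condition together with $n\geq 6$ forces either all blocks to share a single cut vertex (giving $G\subseteq L_{t_1,t_2,1,2}$) or three triangular blocks to be cyclically identified pairwise at cut vertices (giving $G=U_{3,1}$, so $n=6$).

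The main obstacle is the delicate case analysis when the longest cycle is short and $G[U]$ is non-trivial: here one must rule out every attachment pattern that yields the forbidden forest while preserving the candidate structure. In particular, identifying $H_n^1$ and $H_n^2$ as the only $P_5\cup P_3$-free graphs beyond $S^+_{n,2}$ and $L_{t,2}$ requires verifying that no $2$-connected internal block other than $S_{m,2}$ is possible, and that only one or two extra triangular end blocks can be attached. A secondary technicality is the case $\ell=2h+2$ in parts~$(ii), (iii)$, since Lemma~\ref{Lemma18}$(iii)$ only pins down $N_{C_\ell}(u)$ and $d_{C_\ell}(u)$ for $u\in U$; one must then do an explicit embedding using a pair of external vertices with the same neighborhood on $C_\ell$ to either produce $F$ or obtain a contradiction with $2$-connectivity.
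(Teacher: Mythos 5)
There is a genuine gap, and it sits exactly where the lemma is hardest. Your cut-vertex branch for part (ii) asserts that $F$-freeness together with Lemma~\ref{Lem7} forces every end block to have order $h+1=3$; this is false, and $H_n^1$ itself is the counterexample, since its big block $S_{n-2,2}$ is an \emph{end} block (not an internal one, as you describe it). The reason the block argument from Theorems~\ref{Thm1} and~\ref{Thm4} does not transfer is quantitative: with $h=2$, Lemma~\ref{Lem7} only guarantees a path of order $\min\{|V(B)|,4\}$ from the cut vertex of a block $B$, so two end blocks joined through cut vertices yield at most a $P_7$, which does not contain $P_5\bigcup P_3$ (order $8$); hence arbitrarily large blocks survive and must be classified directly. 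You flag precisely this classification (``no $2$-connected internal block other than $S_{m,2}$ is possible'') as the ``main obstacle'' but give no argument for it, and that classification \emph{is} the content of part (ii). The paper avoids the $2$-connected/cut-vertex split entirely: for (ii) it takes a longest cycle $C_l$ in $G$ (whether or not $G$ is $2$-connected), shows $3\le l\le 6$, and in the case $l=4$ analyzes the longest path in $G[U]$ via Lemma~\ref{Lemma19}; the graphs $H_n^1$ and $H_n^2$ fall out of the subcase where $G[U]$ is a union of edges and isolated vertices all attaching to one or two antipodal vertices of $C_4$. Part (iii) is then a two-line reduction to (ii) by checking which exceptional graphs of (ii) still avoid $P_2\bigcup 2P_3$, rather than a fresh analysis.

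Part (i) has a second, independent problem: with only $\delta(G)\ge1$ neither Lemma~\ref{Lem9} nor Lemma~\ref{Lem7} applies (both need degree at least $2$), and $G$ may be a tree, so in the cut-vertex branch the block machinery gives you nothing; the paper instead runs a longest-\emph{path} argument ($3\le l\le5$) for (i). You also misidentify $U_{3,1}$ as ``three triangular blocks cyclically identified pairwise,'' whereas it is a single triangle with a pendant edge at each vertex (one $K_3$ block and three $K_2$ end blocks); the graph you describe actually contains $2P_3$. These slips indicate the case analysis has not been carried out, so the proposal as written does not establish the lemma.
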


\begin{Proof}
($i$). Suppose  $2P_3\nsubseteq G$. Let $P_l$ be a longest path  in $G$, denoted by $v_1v_2 \cdots v_l$.
Since $n\geq6$ and  $2P_3\nsubseteq G$,  we have $3\leq l\leq 5$.
If $l=3$, then $G$ must be a star $K_{1, n-1}$, i.e., $G=L_{n-1,1}$. Since $n\geq6$ and  $2P_3\nsubseteq G$, if $l=4$ then
 $G\subseteq L_{n-3,1,1,2}$.
Next we assume that  $l=5$.
Since $2P_3\nsubseteq G$,   we have $N_{P_5}(u)\subseteq \{v_3\}$ for all $u\in V(G)\backslash V(P_5)$. Let $X\subseteq V(G)\backslash V(P_5)$ such that $N_{P_5}(u)=\{v_3\}$ for all $u\in X$ and $Y=V(G)\backslash (V(P_5)\bigcup X)$.
Since  $n\geq 6$ and $2P_3\nsubseteq G$,   we have $X\neq \varnothing $ and  $P_3\nsubseteq  G[X]$. So we can assume that $G[X]$ consists of $p$ disjoint edges and $q$ isolated vertices, i.e.,
 $u_1u_2,\ldots,u_{2p-1}u_{2p},w_1,\ldots,w_q$, where $p+q\geq1$.
Since $P_5$ is a longest path in $G$, $Y$ must be  an independent set if $|Y|\geq1$.
Furthermore,  $N_G(u)=\{w_i\}$ and $N_G(v)=\{w_j\}$ for any two distinct vertices $u,v\in Y$ (it also holds for $|Y|=1$), where $1\leq i,j\leq q$ and $i\neq j$.
 Moreover,      $v_1v_4$, $v_1v_5$,  $v_2v_5\notin E(G)$, otherwise $2P_3\subseteq G$, a contradiction. Since   $2P_3\nsubseteq G$,
 if $v_2v_4\in E(G)$ then  $v_1v_3\notin E(G)$, $|Y|=p=0$, and $q=1$, which implies that $G=U_{3,1}$.
If  $v_2v_4\notin E(G)$, then $G\subseteq L_{t_1,t_2,1,2}$, where $n=t_1+2t_2+1$. Thus the assertion holds.

 ($ii$).  Suppose  $P_5\bigcup P_3\nsubseteq G$. Let $C_l$ be a longest cycle  in $G$, denoted by $v_1v_2\cdots v_lv_1$, and  $U=V(G)\backslash V(C_l)$.
Since $C_l$ is a longest cycle, none of vertices in $U$ is adjacent to any two  consecutive vertices of $C_l$. Since $n\geq8$ and $P_5\bigcup P_3\nsubseteq G$, it follows that  $3\leq l\leq 6$.
 We consider the following four cases.

 \textbf{Case 1}: $l=3$. This implies that all cycles in $G$ are triangles. Since $\delta(G)\geq2$,  $G$ has at least  two triangles.
 Let $T_1$ and $T_2$ be two triangles in $G$ such that the longest path $P_k$, denoted by $u_1u_2\cdots u_k$, with
$V(P_k)\bigcap V(T_1)=\{u_1\}$ and $V(P_k)\bigcap V(T_2)=\{u_k\}$, is as long as possible.
Since $n\geq8$, $\delta(G)\geq2$, and $P_5\bigcup P_3\nsubseteq G$, it follows that  $1\leq k\leq2$.
More precisely, $k=1$ (Otherwise $k=2$. $P_5\bigcup P_3\nsubseteq G$  together with $\delta (G)\geq 2$ implies that
$d_G(w)=\{u_1,u_2\}$  for every $w\in V(G)\backslash \bigcup_{i=1}^2 V(T_i)$.  Then $|V(G)\backslash \bigcup_{i=1}^2 V(T_i)|=1$, i.e., $n=7$,
 which  contradicts that $n\geq 8$.).
So all triangles in $G$ share a  common vertex.  Since $\delta(G)\geq2$,   $G=L_{t,2}$ for $n=2t+1$.

 \textbf{Case 2}:  $l=4$. Let $P_k$, denoted by $u_1\cdots u_k$, be a longest path in $G[U]$.
Since $P_5\bigcup P_3\nsubseteq G$,  it follows that $1\leq k\leq 4$. we consider the following four subcases.

{ \bf Subcase 2.1:}  $k=4$. Since $P_5\bigcup P_3 \nsubseteq G$,  it follows that  $N_{C_4}(u_1)=N_{C_4}(u_4)=\varnothing$.
Since $G$ is connected,  either $N_{C_4}(u_2)\neq \varnothing$  or $N_{C_4}(u_3)\neq \varnothing$.
If $u_1u_4\in E(G)$, then  $P_5\bigcup P_3 \subseteq G$, a contradiction.
 Since $\delta(G)\geq2$, if $u_1u_4\notin E(G)$ then  $u_1u_3\in E(G)$ and $u_2u_4\in E(G)$. Thus $P_5\bigcup P_3\subseteq G$, a contradiction.

 { \bf Subcase 2.2:}  $k=3$. By  Lemma~\ref{Lemma19}~(ii), $N_{C_4}(u_1)=N_{C_4}(u_3)$. Since $C_4$ is a longest cycle,
$d_{C_4}(u_1)=d_{C_4}(u_3)=1$. Note that $n\geq 8$. We have $P_5\bigcup P_3\subseteq G$, a contradiction.

{ \bf Subcase 2.3:}  $k=2$.  Obviously,  $G[U]$ consists of $p$ disjoint edges and $q$ isolated vertices, i.e.,
  $u_1u_2, u_3u_4, \ldots, u_{2p-1}u_{2p}, w_1, \ldots, w_q$, where $p\geq1$, $q\geq0$ and $2p+q=n-4$.
  Since $C_4$ is a longest cycle,  Lemma~\ref{Lemma19} $(i)$ implies that $N_{C_4}(u_{2i-1})=N_{C_4}(u_{2i})$ and $d_{C_4}(u_{2i-1})=d_{C_4}(u_{2i})=1$
for $1\leq i\leq p$.   So we assume without loss of generality that $N_{C_4}(u_1)=N_{C_4}(u_2)=\{v_1\}$.
Since $C_4$ is a longest cycle  and $\delta(G)\geq2$,   it follows that $N_{C_4}(w_1)=\cdots=N_{C_4}(w_r)=\{v_1,v_3\}$ and $N_{C_4}(w_{r+1})=\ldots=N_{C_4}(w_{r+t})=\{v_2,v_4\}$,
 where $r+t=q$.
 Moreover, since $P_5\bigcup P_3\nsubseteq G$, if $p\geq2$ then $N_{C_4}(u_3)=N_{C_4}(u_4)=\{v_3\}$, $p=2$, and $t=0$.  Thus  $G\subseteq H_n^2$.
In addition, since  $P_5\bigcup P_3\nsubseteq G$ and $r+t=q\geq 2$, if $p=1$ then  $t=0$.
   Thus $G\subseteq H_n^1$.

{ \bf Subcase 2.4:}  $k=1$. Obviously, the graph $G[U]$ consists of $q$ isolated vertices, i.e., $w_1, \ldots, w_q$, where $q=n-4\geq4$.
Since $C_4$ is a longest  cycle and $\delta(G)\geq2$,   $N_{C_4}(w_1)=\cdots=N_{C_4}(w_r)=\{v_1,v_3\}$ and $N_{C_4}(w_{r+1})=\cdots=N_{C_4}(w_{r+t})=\{v_2,v_4\}$,
 where $r+t=q\geq4$.
We assume without loss of generality that $r\geq t$. It follows that  $t=0$, otherwise $P_5\bigcup P_3\subseteq G$, a contradiction.
Furthermore, since $P_5\bigcup P_3\nsubseteq G$, we also have  $v_2v_4\notin E(G)$.  Thus  $G\subseteq S_{n,2}$.

{ \bf Case 3:}  $l=5$. Since $P_5\bigcup P_3 \nsubseteq G$,  we have $P_3\nsubseteq G[U]$.
Furthermore, $U$ is an  independent set (Otherwise, the graph $G[U]$ contains an edge $u_1u_2$.
Since $C_5$ is a longest cycle, Lemma~\ref{Lemma19}~(i) implies that $N_{C_5}(u_1)=N_{C_5}(u_2)$ and $d_{C_5}(u_1)=d_{C_5}(u_2)=1$.
Note that $n\geq 8$ and  $\delta(G)\geq2$.  We have  $P_5\bigcup P_3\subseteq G$, a contradiction.).
Since $n\geq 8$ and  $\delta(G)\geq2$, the longest cycle $C_5$ together with $P_5\bigcup P_3 \nsubseteq G$
implies that $N_{C_5}(u_1)=N_{C_5}(u_2)$ and $d_{C_5}(u_1)=d_{C_5}(u_2)=2$
  for any two vertices $u_1,u_2\in U$. We assume without loss of generality that $N_{C_5}(u)=\{v_1,v_3\}$ for all $u\in U$.  Since $P_5\bigcup P_3 \nsubseteq G$,  $G[\{v_2,v_4,v_5\}]$
contains exactly one edge $v_4v_5$. Hence $G\subseteq S_{n,2}^+$. Thus the assertion holds.

{ \bf Case 4:}  $l=6$. Let $U=V(G)\backslash V(C_6)$.   Since $P_5\cup P_3\nsubseteq G$, we have $P_8\nsubseteq G$. By Lemma~\ref{Lemma18}~($iii$),
 $N_{C_6}(u)=N_{C_6}(v)$ for any two vertices $u,v\in U$
and $2\leq d_{C_6}(u)=d_{C_6}(v)\leq 3$.
Since $P_5\bigcup P_3\nsubseteq G$,  it follows that $ d_{C_6}(u)=d_{C_6}(v)=2$ and the distance along $C_6$ between two vertices in $N_{C_6}(u)$ is at least $3$. However, we also have
$P_5\bigcup P_3\subseteq G$, a contradiction.

$(iii)$. Suppose $P_2\bigcup 2P_3\nsubseteq G$. This implies  $P_5\bigcup P_3\nsubseteq G$.
By Lemma~\ref{101}~($ii$), either $G\subseteq S_{n,2}^+$, $G\subseteq H_n^1$, $G\subseteq H_n^2$, or $G=L_{t,2}$ for $n=2t+1$.
However,   if $G=S_{n,2}^+$, $G\subseteq H_n^1$, or $G\subseteq H_n^2$, then $P_2\bigcup 2P_3\subseteq G$, a contradiction.
Hence   either $G\subseteq S_{n,2}$, or $G=L_{t,2}$ for  $n=2t+1$. Thus the assertion holds.
\end{Proof}

\begin{Lemma}\label{lem31}
Let $F=(\bigcup_{i=1}^k P_{2a_i})\bigcup (\bigcup_{i=1}^2 P_{2b_i+1})$, $ h=\sum _{i=1}^{k}a_i+\sum _{i=1}^2 b_i-1\geq3$,
 and $G$ be a connected graph of order $n$ with a longest cycle $C_l$, where $k\geq0$, $a_1\geq \dots \geq a_{k}\geq1$,  $b_1\geq b_2\geq1$,  and $n\geq 2h+4$.
If $\delta(G)\ge h$ and $F\nsubseteq G$, then  $2h\le l\le 2h+1$.
\end{Lemma}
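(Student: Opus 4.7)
The plan is to establish the two inequalities $l\ge 2h$ and $l\le 2h+1$ separately, each by contradiction under the hypothesis $F\nsubseteq G$.

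For the lower bound $l\ge 2h$, when $G$ is $2$-connected the conclusion is immediate from Lemma~\ref{Lem9}: since $n\ge 2h+4\ge 2h$ and $\delta(G)\ge h\ge 3\ge 2$, the graph $G$ already contains a cycle of length at least $2h$. When $G$ has a cut vertex, I pass to the block $B$ of $G$ containing $C_l$; every vertex of $B$ that is not a cut vertex of $G$ in $B$ still has degree at least $h$ inside $B$, so Lemma~\ref{Lem7} applied to $B$ (with $u_1$ a cut vertex of $G$ lying in $B$) produces a path of order $\min\{|V(B)|,2h\}$ ending at $u_1$. Combining this with long paths obtainable from the end blocks of $G$ outside $B$ (each of order $\ge h+1$ by $\delta(G)\ge h$) via Lemma~\ref{Lem7}, one either closes up to a cycle of length $\ge 2h$ inside $B$ or exhibits $F$ directly in $G$, contradicting $F\nsubseteq G$.

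For the upper bound $l\le 2h+1$, suppose $l\ge 2h+2$ and set $U=V(G)\setminus V(C_l)$; note that $|F|=\sum_{i=1}^k 2a_i+\sum_{i=1}^2(2b_i+1)=2h+4$. I split into three cases. If $l\ge 2h+4$, then any $2h+4$ consecutive vertices of $C_l$ form a $P_{2h+4}\subseteq G$, and $P_{2h+4}$ contains every linear forest on $2h+4$ vertices (by deleting the appropriate internal edges), so $F\subseteq G$. If $l=2h+3$, then $|U|\ge 1$ and connectedness supplies $u\in U$ adjacent to some $v_j\in V(C_l)$, whence $uv_jv_{j+1}\cdots v_{j-1}$ is a $P_{2h+4}$ containing $F$. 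If $l=2h+2$, then $|U|\ge 2$, and the plan is to pick distinct $u,u'\in U$ with $uv_j,u'v_{j+1}\in E(G)$ for some consecutive pair $v_j,v_{j+1}$ on $C_l$, yielding the path $uv_jv_{j-1}\cdots v_{j+2}v_{j+1}u'=P_{2h+4}\supseteq F$. The minimum-degree hypothesis $\delta(G)\ge h$ guarantees that outside vertices have enough cycle-neighbors for such a pair to exist; note also that no single $u\in U$ can be adjacent to two consecutive cycle vertices, or else $C_l$ would not be longest, so $u\ne u'$ is automatic.

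The main obstacle is the last sub-case $l=2h+2$, in which two degenerate patterns must be addressed. First, if only one vertex $u\in U$ is adjacent to $C_l$, then by connectedness every other $U$-vertex reaches the cycle through $u$, and since $\delta(G)\ge h\ge 3$ the vertex $u$ must have an $U$-neighbor $u'$; one then uses $u'uv_jv_{j+1}\cdots v_{j-1}=P_{2h+4}\supseteq F$. Second, if the cycle-neighborhoods of the outside vertices are ``misaligned'' so that no consecutive pair is picked up, one instead attaches $u,u'$ to cycle vertices $v_j,v_k$ at cycle-distance $d_1=k-j$, producing the decomposition $P_{d_1+3}\cup P_{2h+1-d_1}$; the flexibility in choosing $d_1$, combined with the constraints $a_i,b_i\ge 1$ and $\sum_{i=1}^k a_i+b_1+b_2=h+1$ on the component sizes of $F$, allows one to match this decomposition to the component structure of every admissible $F$, completing the contradiction.
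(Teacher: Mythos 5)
Your reduction of the upper bound to $P_{2h+4}\nsubseteq G$ and your treatment of $l\ge 2h+3$ agree with the paper, but the decisive case $l=2h+2$ is not actually proved. First, your primary mechanism can never occur: since $C_l$ is a longest cycle and $P_{2h+4}\nsubseteq G$, Lemma~\ref{Lemma18}(iii) forces \emph{all} vertices of $U$ to have one and the same cycle-neighborhood $V_1$, with $h\le|V_1|\le h+1$ and no two elements of $V_1$ consecutive on $C_l$; hence there is no consecutive pair $v_j,v_{j+1}$ with $uv_j,u'v_{j+1}\in E(G)$. The entire burden therefore falls on your final ``misaligned'' paragraph, where the matching claim is asserted rather than proved --- and for the one decomposition you offer it is false. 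Two disjoint paths of orders $p+q=2h+4$ contain $F$ only if some subset of the components of $F$ sums to exactly $p$, and your decomposition $P_{d_1+3}\bigcup P_{2h+1-d_1}$ realizes only the splits $\{d+3,\,2h+1-d\}$ (and, via the other arc, $\{d-1,\,2h+5-d\}$) for $d$ a distance between two points of $V_1$. Take $h=3$, $F=P_7\bigcup P_3$ and $V_1=\{v_1,v_3,v_6\}$ on $C_8$ (a legitimate shape when $|V_1|=h$): the distances are $d\in\{2,3,5\}$, your achievable splits are $\{5,5\},\{1,9\},\{4,6\},\{2,8\}$, and none has a part equal to $3$ or $7$. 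The split $\{3,7\}$ is reachable, but only by placing $u$ and $u'$ on \emph{different} paths, a decomposition you never consider; and even then one must check every admissible shape of $V_1$ against every subset sum of $F$. This verification is precisely where the paper does its real work: it uses $h+2$ outside vertices to build a complete bipartite graph between $V_1$ and $U$, appends paths drawn from $V_2=V(C_l)\setminus V_1$, and invokes Lemma~\ref{Lemma20}; two outside vertices do not suffice.

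A secondary problem is your lower bound in the cut-vertex case. The claim $l\ge 2h$ is simply false for connected graphs with a cut vertex: $G=L_{t,h}$ with $h=2b-1\ge3$ and $F=2P_{2b+1}=2P_{h+2}$ has $\delta(G)=h$, $F\nsubseteq G$ (two disjoint $P_{h+2}$'s would each need the center), and $n=th+1\ge 2h+4$ for $t\ge3$, yet its longest cycle is $C_{h+1}$. So no argument of the kind you sketch (``closing up to a cycle of length $\ge 2h$ inside a block or exhibiting $F$ directly'') can succeed. The lemma is implicitly a statement about $2$-connected graphs --- it is only applied to such graphs in Theorem~\ref{Thm5} --- and the paper accordingly disposes of the lower bound by citing the Dirac-type Lemma~\ref{Lem9} and nothing more.
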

\begin{Proof}
Let $C_l=v_1v_2\cdots v_lv_1$.  By Lemma~\ref{Lem9},  $l\geq2h$.
Since $\sum_{i=1}^{k}2a_i+\sum _{i=1}^2 (2b_i+1)=2h+4$ and $F\nsubseteq G$,  we have $P_{2h+4}\nsubseteq G$.
Note that $G$ is connected and $n\geq 2h+4$.  So  $l\le 2h+2$.    We now prove $l\le 2h+1$.  Suppose that $l=2h+2$.
Let  $U=V(G)\setminus V(C_{2h+2})$. By Lemma~\ref{Lemma18} ($iii$),
all vertices in $U$ have the same $C_{2h+2}$-neighborhood,  denoted by $V_1$, and $h\leq|V_1|\leq h+1$.
  Let    $V_2=V(C_{2h+2})\setminus V_1$. We consider  the following two cases.

{\bf Case 1:}  $|V_1|=h+1$. Since $C_{2h+2}$ is a longest cycle, none of  vertices in $U$ is adjacent to any two consecutive vertices in $V(C_{2h+2})$.
 So we assume without loss of generality  that $V_1=\{v_1, v_3, \ldots, v_{2h+1}\}$.
Let $H$, i.e., $H=(X,Y;E)$, be a bipartite subgraph of $G$,  where $X=V_1\setminus\{v_{2h+1}\}$ and $Y\subseteq U$ with $|Y|= h+2$ and $|U|= |V(G)|-|V(C_{2h+2})|\ge h+2 $.
By  definition, $H$ must be a complete bipartite graph.
Moreover, let $G_1$ be the graph obtained from $H$ by identifying $v_{2h-1}$ with a path $P_4$, $v_{2h-1}v_{2h}v_{2h+1}v_{2h+2}$. By  Lemma~\ref{Lemma20} $(ii)$, we have
$F\subseteq G$, a contradiction.

{\bf Case 2:}  $|V_1|=h$. Since none of  vertices in $U$ is  adjacent to any two consecutive vertices in $V(C_{2h+2})$, there exist two edges  $e_1,e_2\in  E(C_{2h+2})$ whose
 end vertices are all in $V_2$. If $e_1$ and $e_2$ share a common vertex,
then  we assume without loss of generality that $e_1=v_{2h}v_{2h+1}$ and $e_2=v_{2h+1}v_{2h+2}$.
This implies that $V_1=\{v_1,\ldots,v_{2h-1}\}$ and  $v_{2h}v_{2h+1}v_{2h+2}\subseteq G[V_2]$. By Lemma~\ref{Lemma20}~$(ii)$,  $F\subseteq G$, a contradiction.
So  $e_1$ and $e_2$ share no vertex in common.
 We assume  without loss of generality that $e_1=v_1v_2$ and $e_2=v_{2s+2}v_{2s+3}$, where $1\le s\le h-1$.
 Let $W=V_2\backslash \{v_1,v_2,v_{2s+2},v_{2s+3}\}$.
 Furthermore, we claim that $W$ is an independent set. In fact, if $v_pv_q\in E(G)$, where $v_p, v_q\in W$, then
 $uv_{p-1}v_{p-2}\cdots v_qv_pv_{p+1}\cdots v_{q-1}u$ is a longer cycle than $C_{2h+2}$ with $u\in U$, a contradiction.
Moreover, since $F\nsubseteq G$, Lemma~\ref{Lemma20} $(ii)$ implies that $G[V_2]$ contains exactly  two disjoint edges $e_1$ and $e_2$.
Thus for every $u\in \{v_1,v_2,v_{2s+2},v_{2s+3}\}$,   $N_G(u)\bigcap V_1 \geq h-1\geq2$
and $|N_G(v_1)\bigcap N_G(v_{2s+2})\bigcap V_1|\geq h-2\geq1$. Choose $v\in N_G(v_1)\bigcap N_G(v_{2s+2})\bigcap V_1$.
Note that 
$v_2v_1vv_{2s+2}v_{2s+3} \subseteq G$ 
 and $v_2$ has a neighbor different from $v$ in $V_1$.
By Lemma~\ref{Lemma20} $(iii)$,  $F\subseteq G$, a contradiction.

Hence the assertion holds.
\end{Proof}

\begin{Lemma}\label{lem32}
Let $F=\bigcup_{i=1}^2 P_{2b_i+1}$, $ h=\sum _{i=1}^2 b_i-1\geq3$ and $G$ be a connected graph of order $n$  with a longest cycle $C_{2h}$, where $b_1\geq b_2\geq1$ and $n\geq 2h+4$.
If $\delta(G)\ge h$  and $F\nsubseteq G$, then  $P_3\nsubseteq G[U]$, where $U=V(G)\setminus V(C_{2h})$. \end{Lemma}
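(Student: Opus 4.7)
The plan is a proof by contradiction. Suppose $u_1u_2u_3\subseteq G[U]$; I will produce a path $P_{2h+4}\subseteq G$, which contains $F=P_{2b_1+1}\cup P_{2b_2+1}$ as two vertex-disjoint subpaths (since $2b_1+1+2b_2+1=2h+4$, splitting the $P_{2h+4}$ at an interior edge yields the two required subpaths). This contradicts $F\nsubseteq G$.

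First, if $G[U]$ contains a $P_4$, say $w_1w_2w_3w_4$, then $d_G(w_1)\ge h$ combined with $d_{G[U]}(w_1)\ge 1$ gives $d_{C_{2h}}(w_1)\ge h-1\ge 2$. Picking any $v\in N_{C_{2h}}(w_1)$, the path $v_{+1}v_{+2}\cdots vw_1w_2w_3w_4$, obtained by concatenating the Hamiltonian path of $C_{2h}$ that avoids the cycle edge $vv_{+1}$ with the $P_4$ at $v$, yields the required $P_{2h+4}\subseteq G$.

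Otherwise $G[U]$ is $P_4$-free, and Lemma~\ref{Lemma19}(ii) applied to $u_1u_2u_3$ yields $N_{C_{2h}}(u_1)=N_{C_{2h}}(u_3)=:A$. The $P_4$-freeness constrains the $U$-neighborhoods of $u_1$ and $u_3$ to $\{u_2,u_3\}$ and $\{u_1,u_2\}$ respectively, giving $d_{G[U]}(u_1)\le 2$ and hence $|A|\ge h-2$. I exploit that $C_{2h}$ is the longest cycle: for any distinct $v_p,v_q\in A$, the cycle $u_1v_p[\mathrm{arc}]v_qu_3u_2u_1$ has $k+4$ vertices (where $k$ counts the edges on the chosen arc of $C_{2h}$ between $v_p$ and $v_q$), and both arc choices must give cycles of at most $2h$ vertices, forcing each arc to have at least $4$ edges. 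Thus distinct elements of $A$ are pairwise at cyclic distance $\ge 4$ on $C_{2h}$, so $|A|\le\lfloor h/2\rfloor$. Combined with $|A|\ge h-2$ this gives $h\le 4$, and the case $h\ge 5$ is thereby handled.

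The main obstacle is the residual small cases $h\in\{3,4\}$, in which the $4$-spacing argument gives only $|A|\in\{1,2\}$ and the structure is very rigid. For these I plan a finer case analysis: starting with the path $u_3u_2u_1vv_{+1}v_{+2}\cdots v_{-1}$ of order $2h+3$ for some $v\in A$, I extend by one more vertex using a vertex $u_4\in U\setminus\{u_1,u_2,u_3\}$ (guaranteed by $|U|\ge 4$). When $u_4$ is adjacent to an endpoint of this path, we extend directly; otherwise the minimum-degree condition $\delta(G)\ge h$ forces chord edges on $C_{2h}$ (since each vertex of $V(C_{2h})\setminus A$ has only two cycle neighbors and requires $\ge h-1$ further neighbors), and these chords can be used to reroute a portion of the Hamiltonian path and make room for the extra vertex. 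Managing the finite enumeration of these residual configurations is the most delicate part of the proof.
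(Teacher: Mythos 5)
Your overall architecture coincides with the paper's: first rule out $P_4\subseteq G[U]$, then apply Lemma~\ref{Lemma19}~(ii) to get $N_{C_{2h}}(u_1)=N_{C_{2h}}(u_3)$, use the spacing\mbox{-}at\mbox{-}least\mbox{-}$4$ argument to force $h\le 4$, and finish the residual cases by hand. However, your first step contains a genuine logical error. From $d_G(w_1)\ge h$ and $d_{G[U]}(w_1)\ge 1$ you cannot conclude $d_{C_{2h}}(w_1)\ge h-1$: since $d_{C_{2h}}(w_1)=d_G(w_1)-d_{G[U]}(w_1)$, a lower bound on $d_{G[U]}(w_1)$ yields only an \emph{upper} bound on $d_{C_{2h}}(w_1)$. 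An endpoint of a $P_4$ in $G[U]$ may have all of its neighbours inside $U$ (for instance when $G[U]$ contains a clique of order $h+1$ attached to the cycle through a single vertex), so the neighbour $v\in N_{C_{2h}}(w_1)$ on which your $P_{2h+4}$ is built need not exist. The paper argues in the reverse direction: since $P_{2h+4}\nsubseteq G$, \emph{both} endpoints of a suitably chosen $P_4$ (one whose interior is attached to $C_{2h}$) have empty cycle-neighbourhood, and then one endpoint is forced to have total degree $2$, contradicting $\delta(G)\ge h\ge 3$. Some argument of this kind (or a longest-path rotation inside $G[U]$) is needed to close the step.

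Second, the cases $h\in\{3,4\}$ are only a plan, and the plan aims at the wrong target in at least one configuration. For $h=4$ the rigid structure is $N_{C_8}(u_1)=N_{C_8}(u_3)=\{v_1,v_5\}$ with $u_1u_3\in E(G)$, and the $11$-vertex graph induced on $V(C_8)\cup\{u_1,u_2,u_3\}$ contains neither $P_{12}$ nor $F$ (both need $2h+4=12$ vertices); the contradiction in the paper comes from the remaining vertices of $U$ (which exist because $n\ge 2h+4$) being unable to attach to $V(C_8)$ without creating $F$, or to $\{u_1,u_2,u_3\}$ without creating a $P_4$ in $G[U]$, which violates connectedness of $G$. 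So ``extend the $P_{2h+3}$ by one more vertex'' cannot be the uniform mechanism: in these cases one must either exhibit $F$ directly (which, for $F=P_7\bigcup P_5$ or $F=2P_5$, is not the same as exhibiting $P_{2h+4}$ once the available vertex set is too small) or derive a structural contradiction, and the finite enumeration you defer is precisely where the remaining content of the lemma lies.
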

\begin{Proof}
Let $C_{2h}=v_1v_2\cdots v_{2h}v_1$.  Since $\sum _{i=1}^2 (2 b_i+1)=2h+4$ and $F\nsubseteq G$, we have $P_{2h+4}\nsubseteq G$.
 We first prove the following two Claims.

 {\bf Claim 1: }  $P_4\nsubseteq G[U]$.

Suppose $P_4\subseteq G[U]$.    Since $P_{2h+4} \nsubseteq G$, there exists one path $P_4$, denoted by  $u_1u_2u_3u_4 $,
such that $N_{C_{2h}}(u_1)=N_{C_{2h}}(u_4)=\varnothing$, and  either $N_{C_{2h}}(u_2)\neq \varnothing$  or $N_{C_{2h}}(u_3)\neq \varnothing$.
Furthermore,  since $P_{2h+4} \nsubseteq G$,   either $N_G(u_1)=\{u_2,u_3\}$ or $N_G(u_4)=\{u_2, u_3\}$. So $d_G(u_1)=2$ or $d_G(u_4)=2$,
which contradicts that $\delta(G)\geq h\geq3$.  So Claim 1 holds.

{\bf Claim 2:}  $P_3\nsubseteq G[U]$.

Suppose   $P_3\subseteq G[U]$. Let $P_3=u_1u_2u_3\subseteq G[U]$.  By Claim 1,  $P_4\nsubseteq G[U]$, and thus
 by Lemma~\ref{Lemma19} $(ii)$,  $N_{C_{2h}}(u_1)=N_{C_{2h}}(u_3)$.
  Moreover, $d_{C_{2h}}(u_1)\geq d_G(u_1)-2\geq h-2$ and  $d_{C_{2h}}(u_3)\geq d_G(u_3)-2\geq h-2$.
   Since $C_{2h}$ is a longest cycle in $G$, the distance along $C_{2h}$ between any two vertices in $N_{C_{2h}}(u_1)$  is at least 4,
 which implies that $4(h-2)\leq 2h$.  Thus $3\leq h\leq 4$.    We consider the following two cases.

{\bf Case 1:}   $h=4$.  Obviously, either $F=P_9\bigcup P_3$ or $F=P_7\bigcup P_5$.
It is easy to see that $d_{C_8}(u_1)=d_{C_8}(u_3)=2$, $d_G(u_1)=d_G(u_3)=4$, and the distance along $C_8$ between  two vertices in $N_{C_8}(u_1)$ is exactly $4$. Hence we assume  without loss of generality that $N_{C_8}(u_1)=N_{C_8}(u_3)=\{v_1,v_5\}$.   Since $P_4\nsubseteq G[U]$,  we have $u_1u_3\in E(G)$.
Note that $n\geq12$ in this  case.  So   $U\backslash \{u_1,u_2,u_3\}\neq \varnothing$.  Since $F\nsubseteq G$ and
$P_4 \nsubseteq G[U]$,  $u$ is  adjacent to none of vertices in $V(C_8)\bigcup\{u_1, u_2, u_3\}$ for all $u\in U\backslash \{u_1,u_2,u_3\}$,
 which contradicts that $G$ is connected.

 {\bf Case 2:}  $h=3$. Obviously, either $F=P_7\bigcup P_3$ or $ F=2P_5$. It is easy to see that $d_{C_6}(u_1)=d_{C_6}(u_3)=1$ and $d_G(u_1)=d_G(u_3)=3$.
We assume without loss of generality   that $N_{C_6}(u_1)=N_{C_6}(u_3)=\{v_1\}$.     Note that $P_4\nsubseteq G[U]$.  We have $u_1u_3\in E(G)$.
 Note that $n\geq10$ in this case. So $ U\backslash \{u_1,u_2,u_3\}\neq \varnothing$.
Furthermore, since $F\nsubseteq G$,  it follows that  $N_C(u)\bigcap \{v_1,v_2,v_3,v_5,v_6\}=\varnothing$ for all $u\in U\backslash \{u_1,u_2,u_3\}$.
Hence  $N_{C_6}(u)\subseteq \{v_4\}$. Moreover, since $P_4\nsubseteq G[U]$,  $u$ is not adjacent to any vertex in $\{u_1,u_2,u_3\}$.
  Thus  the connectedness of $G$ implies that  there exists $u_0\in U\backslash \{u_1,u_2,u_3\}$ such that $N_{C_6}(u_0)=\{v_4\}$.
However, since $\delta(G)\geq3$, there exists another vertex $w\in  U\backslash \{u_1,u_2,u_3\}$ such that $u_0w\in E(G)$. Therefore $F\subseteq G$, a contradiction.

Thus the assertion holds.
\end{Proof}

\begin{Corollary}\label{cor31}
Let $F=\bigcup_{i=1}^2 P_{2b_i+1}$, $ h=\sum _{i=1}^2 b_i-1\geq3$, and $G$ be a connected graph of order $n$  with a longest cycle $C_{2h}$, where $ b_1\geq b_2\geq1$ and $n\geq 2h+4$.
If  $\delta(G)\ge h$, then $F\subseteq G$, unless either  $G\subseteq S_{n,h}$, or $F=P_7\bigcup P_3$ and $G\subseteq K_2\bigvee \frac{n-2}{2}K_2$ for
even $n$.
\end{Corollary}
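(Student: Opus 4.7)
Plan: I would suppose for contradiction that $F\not\subseteq G$ and analyze the set $U=V(G)\setminus V(C_{2h})$. By Lemma~\ref{lem32}, $G[U]$ is $P_3$-free, so it decomposes into a matching together with isolated vertices. If $U$ is an independent set, Lemma~\ref{Lemma18}(i) immediately yields $G\subseteq S_{n,h}$, which is one of the listed conclusions. The remainder of the argument treats the case when $G[U]$ contains an edge $u_1u_2$.

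For such an edge, Lemma~\ref{Lemma19}(i) gives $N_C(u_1)=N_C(u_2)$, and the absence of a $P_3$ in $G[U]$ forces $d_C(u_i)=d_G(u_i)-1\ge h-1$. The key geometric observation is that if two vertices $v_i,v_j\in N_C(u_1)$ lie at distance at most $2$ along $C_{2h}$, then the detour $v_iu_1u_2v_j$ combined with the longer arc between $v_i$ and $v_j$ produces a cycle of length $2h+3-d>2h$, contradicting the maximality of $C_{2h}$. Hence the neighbors of $u_1$ on $C$ are pairwise at distance $\ge 3$, forcing $h-1\le\lfloor 2h/3\rfloor$; with $h\ge 3$ this pins down $h=3$. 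Therefore $C=C_6$, $F\in\{P_7\cup P_3,\,2P_5\}$, and I may assume $N_C(u_1)=N_C(u_2)=\{v_1,v_4\}$.

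Next I would show that every other matched pair $uu'\in G[U]$ satisfies $N_C(u)=N_C(u')=\{v_1,v_4\}$ by explicitly exhibiting $F$ whenever the $C$-neighborhood is one of the other two opposite pairs $\{v_2,v_5\}$ or $\{v_3,v_6\}$ (for example, if $N_C(uu')=\{v_2,v_5\}$, then $v_3v_4v_5v_6v_1u_1u_2$ and $uu'v_2$ give $P_7\cup P_3$, and there is a parallel $2P_5$ construction). Then I would rule out any isolated vertex $w\in U$: such a $w$ satisfies $|N_C(w)|\ge 3$ with no two neighbors consecutive, so $N_C(w)\in\{\{v_1,v_3,v_5\},\{v_2,v_4,v_6\}\}$. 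Using the fourth vertex of $U$ guaranteed by $|U|\ge n-6\ge 4$, one constructs $F$ explicitly in every resulting sub-configuration (that fourth vertex being paired or isolated, and with either possible $C$-neighborhood), contradicting $F\not\subseteq G$. This forces $G[U]$ to be a perfect matching with every pair attached only to $\{v_1,v_4\}$.

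The final step is to exclude the cross-pair edges $v_2v_5,\,v_2v_6,\,v_3v_5,\,v_3v_6$: each such edge combined with the two $U$-pairs provided by $|U|\ge 4$ yields a $P_7\cup P_3$ (e.g.\ $v_3v_2v_5v_6v_1u_1u_2$ together with $u_3u_4v_4$). What remains is exactly $G\subseteq K_2\vee\frac{n-2}{2}K_2$ with centers $\{v_1,v_4\}$ and pairs $\{v_2,v_3\},\{v_5,v_6\}$ and those of $U$, and $n$ even. If $F=2P_5$, the paths $v_2v_3v_4u_1u_2$ and $v_5v_6v_1u_3u_4$ give $F\subseteq G$, a contradiction, so the exception only applies to $F=P_7\cup P_3$, as stated. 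The main technical obstacle is the isolated-vertex step, which splinters into several subcases each requiring its own explicit embedding of $P_7\cup P_3$; all remaining steps are direct corollaries of the longest-cycle constraint and the bookkeeping of pair-neighborhoods already set up by Lemmas~\ref{Lemma18} and~\ref{Lemma19}.
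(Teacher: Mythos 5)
Your proposal is correct and follows essentially the same route as the paper's proof: assume $F\nsubseteq G$, invoke Lemma~\ref{lem32} to make $G[U]$ a matching plus isolated vertices, dispatch the independent case via Lemma~\ref{Lemma18}(i), then use Lemma~\ref{Lemma19}(i) and the distance-$3$ constraint to force $h=3$ and $N_{C_6}(u_1)=N_{C_6}(u_2)=\{v_1,v_4\}$, rule out isolated vertices of $U$, and finish by pinning down $G\subseteq K_2\bigvee\frac{n-2}{2}K_2$ and excluding $F=2P_5$. The only differences are presentational (you spell out some embeddings the paper leaves implicit), so no further comparison is needed.
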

\begin{proof}
 Suppose $F\nsubseteq G$.   Let  $C_{2h}=v_1v_2\cdots v_{2h}v_1$ and $U=V(G)\setminus V(C_{2h})$.
 By Lemma~\ref{lem31},  $P_3 \nsubseteq G[U]$.  If  $U$ is an independent set, then  Lemma~\ref{Lemma18} $(i)$
 implies $G\subseteq S_{n,h}$.
  Hence  we assume that $G[U]$ consists of  $p$ disjoint edges and $q$ isolated vertices, i.e.,   $u_1u_2, u_3u_4, \ldots, $
 $u_{2p-1}u_{2p}, w_1, \ldots, w_q$, where $p\ge 1$ and $q\ge 0$.  By   Lemma~\ref{Lemma19} $(i)$,   $N_{C_{2h}}(u_{2i-1})=N_{C_{2h}}(u_{2i})$
 for $1\leq i\leq p$.   Furthermore,  $d_{C_{2h}}(u_{j})=d_G(u_{j})-1\geq h-1$ for $1\leq j\leq 2p$.
Since $C_{2h}$ is a longest cycle, the distance along $C_{2h}$ between any two vertices in $N_{C_{2h}}(u_1)$ is at least 3, which implies that $3(h-1)\leq2h$.
 Thus $h=3$. It follows that either $F=P_7\bigcup P_3$ or $F=2P_5$, and  $d_{C_6}(u_j)=2$ for $1\leq j\leq 2p$.
Moreover, the distance along $C_6$ between  two vertices in $N_{C_6}(u_1)$ is exactly 3.
Hence we assume without loss of generality that $N_{C_6}(u_1)=N_{C_6}(u_2)=\{v_1, v_4\}$.  Since  $F\nsubseteq G$, it follows that
 $N_{C_6}(u_{2i-1})=N_{C_6}(u_{2i})=\{v_1, v_4\}$ for $1\leq i\leq p$.  Next we prove that $q=0$. In fact, since $n\geq10$,
if $ q\geq1$ then  either $p=1$ and $q\ge 2$, or  $p\ge 2$ and $q\ge 1$.
  If $p=1$ and $q\ge 2$, then $d_G(w_i)=d_{C_6}(w_i)=3$ for $1\leq i\leq 2$ and thus $F\subseteq G$, a contradiction.
If $p\ge 2$ and $q\ge 1$, then $d_G(w_1)=d_{C_6}(w_1)=3$,
and thus  $F\subseteq G$, a contradiction.
So $n$ must be even. In addition,  since $F\nsubseteq G$, $G[\{v_2, v_3, v_5, v_6\}]$ consists of two disjoint edges $v_2v_3$ and $v_5v_6$.
 Therefore $G\subseteq K_2\bigvee \frac{n-2}{2}K_{2}$. Moreover, $F=P_7\bigcup P_3$.
Thus the assertion holds.
\end{proof}

\begin{Corollary}\label{cor32}
Let $F=(\bigcup_{i=1}^kP_{2a_i})\bigcup(\bigcup_{i=1}^2 P_{2b_i+1})$, $ h=\sum_{i=1}^ka_i+\sum _{i=1}^2 b_i-1\geq3$, and $G$ be a connected graph of order $n$ with a longest cycle $C_{2h}$, where
 $k\ge 1$, $b_1\geq b_2\geq1$, and $n \geq 2h+4$.
If  $\delta(G)\ge h$, then $F\subseteq G$ unless,  either  $G\subseteq S_{n,h}$, or $F=P_4\bigcup 2P_3$ and $G\subseteq K_2\bigvee \frac{n-3}{2}K_2$ for  even   $n$.
\end{Corollary}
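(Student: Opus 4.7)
The plan is to mirror the proof of Corollary~\ref{cor31}, adapting each step to the presence of at least one even-path component in $F$. Suppose for contradiction that $F\nsubseteq G$; since $\sum_{i=1}^k 2a_i+\sum_{i=1}^2(2b_i+1)=2h+4$, we have $P_{2h+4}\nsubseteq G$. Write $C_{2h}=v_1v_2\cdots v_{2h}v_1$ and $U=V(G)\setminus V(C_{2h})$.

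The first task is to establish $P_3\nsubseteq G[U]$, imitating the two claims of Lemma~\ref{lem32}. The auxiliary sub-claim $P_4\nsubseteq G[U]$ depends only on $P_{2h+4}\nsubseteq G$ and $\delta(G)\geq h\geq 3$, so its argument transfers unchanged. For the main sub-claim, if $P_3=u_1u_2u_3\subseteq G[U]$, Lemma~\ref{Lemma19}(ii) gives $N_{C_{2h}}(u_1)=N_{C_{2h}}(u_3)$ with $d_{C_{2h}}(u_i)\geq h-2$; combined with the distance-$4$ constraint from $C_{2h}$ being longest, this forces $4(h-2)\leq 2h$ and hence $h\in\{3,4\}$. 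For each of these values I would enumerate the admissible $F$ with $k\geq 1$ and, following Lemma~\ref{lem32}, use the forced common neighborhood on $C_{2h}$, the edge $u_1u_3$, and a vertex of $U\setminus\{u_1,u_2,u_3\}$ supplied by connectedness, either to exhibit an explicit $F$-copy or to contradict $\delta(G)\geq h$.

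With $P_3\nsubseteq G[U]$ in hand, the remaining analysis parallels Corollary~\ref{cor31}. If $U$ is independent, Lemma~\ref{Lemma18}(i) yields $G\subseteq S_{n,h}$. Otherwise $G[U]$ decomposes as $p\geq 1$ disjoint edges plus $q\geq 0$ isolated vertices, and Lemma~\ref{Lemma19}(i) gives each matched pair a common $C_{2h}$-neighborhood of size $\geq h-1$. The distance-$3$ constraint on the longest cycle then forces $3(h-1)\leq 2h$, so $h=3$. The admissible $F$ with $k\geq 1$ and $h=3$ are exactly $P_2\cup P_5\cup P_3$, $P_4\cup 2P_3$, and $2P_2\cup 2P_3$. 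One then pins the structure: up to relabeling $N_{C_6}(u_{2i-1})=N_{C_6}(u_{2i})=\{v_1,v_4\}$, and $F\nsubseteq G$ further forces $q=0$ (hence $n$ even) and rules out any edge on $\{v_2,v_3,v_5,v_6\}$ beyond two disjoint ones, giving $G\subseteq K_2\vee\tfrac{n-2}{2}K_2$. A final embedding check finishes: each $P_3$ or longer path in $K_2\vee mK_2$ must traverse at least one of the two hub vertices, so $P_2\cup P_5\cup P_3$ and $2P_2\cup 2P_3$ both embed (using the two hubs once each), whereas $F=P_4\cup 2P_3$ would require three hub uses and therefore does not, leaving it as the sole exception.

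The main obstacle is the $h=4$ sub-case of the $P_3\nsubseteq G[U]$ step. With $k\geq 1$, the equation $\sum a_i+\sum b_i=5$ admits several partitions (e.g.\ $P_2\cup P_7\cup P_3$, $P_2\cup 2P_5$, $P_4\cup P_5\cup P_3$, $P_6\cup 2P_3$, $2P_2\cup P_5\cup P_3$, $P_4\cup P_2\cup 2P_3$, $3P_2\cup 2P_3$), and for each one must verify, using the fixed $C_8$-configuration forced by $P_3\subseteq G[U]$ (common neighborhood of size $2$ at distance exactly $4$, plus the edge $u_1u_3$) together with a vertex of $U\setminus\{u_1,u_2,u_3\}$ supplied by connectedness, that the corresponding $F$ can already be constructed. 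This step is routine but notationally heavy.
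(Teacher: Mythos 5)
Your proposal is correct in substance, but it takes a genuinely different and considerably longer route than the paper. The paper's entire proof of Corollary~\ref{cor32} is a reduction to Corollary~\ref{cor31}: set $F'=P_{2b_1'+1}\bigcup P_{2b_2'+1}$ with $b_1'=\sum_{i=1}^k a_i+b_1$ and $b_2'=b_2$; since a path $P_m$ contains any disjoint union of paths with at most $m$ vertices in total (take consecutive blocks), $F\subseteq F'$, so $F\nsubseteq G$ forces $F'\nsubseteq G$, and $F'$ has the same parameter $h$. Corollary~\ref{cor31} then gives either $G\subseteq S_{n,h}$ or the exceptional pair $F'=P_7\bigcup P_3$ with $G\subseteq K_2\bigvee\frac{n-2}{2}K_2$, and in the latter case one only has to list the three admissible $F$ with $k\geq1$, $h=3$ and check which embed in $K_2\bigvee\frac{n-2}{2}K_2$. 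This reduction completely bypasses the step you flag as the main obstacle: you propose to re-prove the analogue of Lemma~\ref{lem32} (that $P_3\nsubseteq G[U]$) for $k\geq1$ by enumerating all admissible forests at $h=3$ and $h=4$, but the observation $F\subseteq F'$ shows that wherever Lemma~\ref{lem32}'s argument exhibits a copy of the two-odd-path forest it automatically exhibits a copy of $F$, so the heavy enumeration is unnecessary. Your plan would go through (precisely because of this containment), so there is no gap, only avoidable labor. One point in your favor: your hub-counting argument in $K_2\bigvee mK_2$ (three vertex-disjoint paths of order at least $3$ would need three hubs, so $P_4\bigcup 2P_3$ fails while $P_2\bigcup P_5\bigcup P_3$ and $2P_2\bigcup 2P_3$ embed) is a clean justification of the final step, which the paper merely asserts; and you correctly write $\frac{n-2}{2}$ where the statement's $\frac{n-3}{2}$ is a typo (compare Theorem~\ref{Thm5}(v)).
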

\begin{proof}
Suppose $F\nsubseteq G$.
Let $F'=\bigcup_{i=1}^2 P_{2b_i'+1}$, where  $b_1'=\sum_{i=1}^k a_i+b_1$ and  $b_2'=b_2$. We have $ F'\nsubseteq G $.
By Corollary~\ref{cor31}, either $G\subseteq S_{n,h}$, or $F'=P_7\bigcup P_3$ and $G\subseteq K_2\bigvee \frac{n-2}{2}K_2$ for even  $n$.
Note  $F\nsubseteq G$ whenever $G\subseteq S_{n,h}$. So we  assume that $F'=P_7\bigcup P_3$ and $G\subseteq K_2\bigvee \frac{n-2}{2}K_2$ for even  $n$.
Then we have $F\in\{P_4\bigcup 2P_3,P_2\bigcup P_5\bigcup P_3, 2P_2\bigcup 2P_3\}$.
In addition,  $ P_2\bigcup P_5\bigcup P_3\subseteq G$, $2P_2\bigcup 2P_3\subseteq G$, and $P_4\bigcup 2P_3\nsubseteq G$.
Hence $F=P_4\bigcup 2P_3$.
Thus the assertion holds.
\end{proof}

\begin{Lemma}\label{Lem33}
Let $F=\bigcup_{i=1}^2 P_{2b_i+1}$, $ h=\sum _{i=1}^2 b_i-1\geq3$, and $G$ be a connected graph of order $n$  with a longest cycle $C_{2h+1}$,
where  $b_1\geq b_2\geq1$ and $n\geq4(2h+1)^2\binom{2h+1}{h}$.
If $\delta(G)\ge h$, then $F\subseteq G$, unless  either  $G\subseteq S^+_{n,h}$, or $F=P_9\bigcup P_3$ and
$ G\subseteq K_3\bigvee \frac{n-3}{2}K_2$ for odd $n$.
\end{Lemma}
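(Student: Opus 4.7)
The plan is to assume $F\not\subseteq G$ and derive either $G\subseteq S^+_{n,h}$, or the exceptional case $F=P_9\cup P_3$ with $G\subseteq K_3\bigvee\tfrac{n-3}{2}K_2$ ($n$ odd), paralleling Lemma~\ref{lem32} for the $C_{2h}$ case. The key initial observation is that a $P_{2h+4}$-subgraph immediately produces $F$ (by splitting it into consecutive sub-paths of lengths $2b_1+1$ and $2b_2+1$), so the assumption $F\not\subseteq G$ gives $P_{2h+4}\not\subseteq G$. Set $C=v_1v_2\cdots v_{2h+1}v_1$ and $U=V(G)\setminus V(C)$.

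First I would show $P_3\not\subseteq G[U]$, adapting the argument of Lemma~\ref{lem32}. A longest $P_k\subseteq G[U]$ with $k\geq 4$ is ruled out by combining $N_C(u_1)=\varnothing=N_C(u_k)$ (otherwise the cycle supplies $P_{2h+k+1}\supseteq P_{2h+4}$) with $\delta(G)\geq h\geq 3$. If $P_3=u_1u_2u_3\subseteq G[U]$ but $P_4\not\subseteq G[U]$, Lemma~\ref{Lemma19}(ii) gives $N_C(u_1)=N_C(u_3)$; the longest-cycle condition forces both cyclic arcs between any two of these vertices to lie in $[4,2h-3]$, so $4(h-2)\leq 2h+1$ and $h\leq 4$. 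For $h\in\{3,4\}$ the degree condition forces $u_1u_3\in E$, and one then directly concatenates an arc of $C$ with the triangle $u_1u_2u_3$ to produce $P_{2h+4}\subseteq G$, contradicting the assumption. Hence $G[U]$ is the disjoint union of $p$ edges $u_{2i-1}u_{2i}$ and $q$ isolated vertices.

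If $p\geq 1$, picking an edge $u_1u_2\in G[U]$ and invoking Lemma~\ref{Lemma19}(i) yields $N_C(u_1)=N_C(u_2)$ with pairwise cyclic distance $\geq 3$, forcing $3(h-1)\leq 2h+1$ and hence $h\leq 4$. The sub-cases $h=3$ and $h=4$ with $F=P_7\cup P_5$ can be disposed of either by constructing $F$ directly or by observing $G\subseteq S^+_{n,h}$. The one remaining case, $h=4$ and $F=P_9\cup P_3$, is the source of the stated exception: the common $C$-neighborhood of each edge of $G[U]$ is forced to be the same triple of cycle vertices, yielding $G\subseteq K_3\bigvee\tfrac{n-3}{2}K_2$ with $n$ odd. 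If instead $p=0$, then $U$ is independent, every $u\in U$ has $d_C(u)=h$ with $N_C(u)$ an independent $h$-set of $C$, and $e(V(C),U)\geq h(n-2h-1)\geq(h-\tfrac12)n$ by the hypothesis on $n$. Lemma~\ref{Lem5} applied with $P=V(C)$ produces $X\subseteq V(C)$ of size $h$ and $Y\subseteq U$ of size $2h+1$ with $K_{X,Y}\subseteq G$; the longest-cycle condition makes $X$ independent in $C$, and since an independent $h$-set in $C_{2h+1}$ has exactly one gap of length $3$ and the rest of length $2$, I may assume $X=\{v_1,v_3,\ldots,v_{2h-1}\}$ with $v_{2h},v_{2h+1}$ the two consecutive non-$X$ vertices.

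The next step is to apply Lemma~\ref{Lemma20}(iii) with $v=v_1$: try to exhibit a $P_6$ of the form $v_{2h-1}v_{2h}v_{2h+1}v_1v_2w$, where $w$ is a neighbor of $v_2$ outside $(X\setminus\{v_1\})\cup Y'$ for some $Y'\subseteq Y$ of size $h+2$. If such a $w$ exists, Lemma~\ref{Lemma20}(iii) produces $F\subseteq G$, a contradiction; otherwise $v_2$'s neighbors all lie in $X$, and repeating the argument for every $z\in V(C)\setminus X$ and every $u\in U\setminus Y$ forces $G\subseteq S^+_{n,h}$ with the only edge outside $K_h$ being $v_{2h}v_{2h+1}$. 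The main obstacle will be this final rigidity step: each remaining chord of $C$ or cross-edge between $U\setminus Y$ and $V(C)\setminus X$ must be ruled out by exhibiting a specific $P_{2h+4}$ built from an alternating $K_{X,Y}$-path together with a detour through the offending edge, which contradicts $P_{2h+4}\not\subseteq G$ and completes the structural classification.
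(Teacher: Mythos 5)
Your proposal follows essentially the same route as the paper's proof: reduce to $P_{2h+4}\nsubseteq G$, show $G[U]$ is a disjoint union of edges and isolated vertices, handle the independent case via Lemma~\ref{Lem5} and Lemma~\ref{Lemma20} to force $G\subseteq S^+_{n,h}$, and in the case of an edge in $G[U]$ use Lemma~\ref{Lemma19}(i) and the distance-$3$ count $3(h-1)\le 2h+1$ to isolate $h=4$, $F=P_9\bigcup P_3$, and $G\subseteq K_3\bigvee \frac{n-3}{2}K_2$ for odd $n$. The remaining differences are cosmetic (the paper invokes Lemma~\ref{Lemma20}(ii) rather than (iii) in the rigidity step, and dismisses $P_3\subseteq G[U]$ and the subcases $h=3$, $F=P_7\bigcup P_5$ by short direct arguments of the kind you indicate), so the two approaches coincide.
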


\begin{Proof}
Suppose  $F\nsubseteq G$.   Since $\sum_{i=1}^2 (2b_i+1)=2h+4$,  we have $P_{2h+4}\nsubseteq G$.
   Let   $C_{2h+1}=v_1v_2\cdots v_{2h+1}v_1$ and $U=V(G)\setminus V(C_{2h+1})$.
We consider the  following two cases.

{\bf Case 1:} $U$ is an independent set.  Since $C_{2h+1}$ is a longest cycle, none of vertices in $U$ is adjacent to any two consecutive vertices along $C_{2h+1}$.
It follows that $d_{C_{2h+1}}(u)= h$ for all $u\in U$. By Lemma~\ref{Lem5}, there exists $V_1\subseteq V(C_{2h+1})$ and
$U_1\subseteq U$ with $|V_1|=h$ and $|U_1|=h+2$ such that $H(V_1,U_1;E)$ is a complete bipartite graph. Hence   $N_{C_{2h+1}}(u)=V_1$ for all $u\in U_1$.
We assume   without loss of generality that $V_1=\{v_2, v_4,\ldots, v_{2h}\}$.
Moreover,  $N_{C_{2h+1}}(u)=V_1$ for all $u\in U\backslash U_1$. Otherwise, there exists  $u\in U\backslash U_1$ such that
$N_{C_{2h+1}}(u)\bigcap (V(C_{2h+1})\backslash V_1)\neq \varnothing$.
Since $u$ is not adjacent to    any two consecutive vertices along $C_{2h+1}$,  we have $v_1$ or  $v_{2h+1}\in N_{C_{2h+1}}(u)\bigcap (V(C_{2h+1})\backslash V_1)$.
Note   $v_{2},v_{2h}\in V_1$ and $v_1v_{2h+1}\in E(G)$.   By Lemma~\ref{Lemma20} $(ii)$,  $F\subseteq G$, a contradiction.
Furthermore, $\{v_1,v_3,\ldots, v_{2h-1}\}$ is an independent set (Othewise, let $v_{2s+1}v_{2t+1}\in E(G)$ and  $u\in U$,
 where $1\leq 2s+1<2t+1\leq 2h-1$.  It follows that $uv_{2s}v_{2s-1} \cdots v_2v_1v_{2h+1}v_{2h}\cdots v_{2t+2}v_{2t+1}v_{2s+1}v_{2s+2}\cdots v_{2t-1} v_{2t}u$ is a longer cycle than $C_{2h+1}$, a contradiction).
Similarly, $\{v_3,\ldots, v_{2h+1}\}$ is an independent set. Hence $G[\{v_1,v_3,\ldots,v_{2h+1}\}]$ contains exactly one edge $v_1v_{2h+1}$.
Thus $G\subseteq S^+_{n,h}$.

{\bf Case 2:} $G[U]$ contains at least one edge. Since  $\delta(G)\geq 2$, we have $P_3 \nsubseteq G[U]$.  Hence
we assume that $G[U]$ consists of $p$ disjoint edges and $q$ isolated vertices, i.e., $u_1u_2, u_3u_4, \ldots, u_{2p-1}u_{2p}, w_1, \ldots, w_q$, where $p\ge 1$ and $q\ge 0$.
By  Lemma~\ref{Lemma19} $(i)$,  $N_{C_{2h+1}}(u_{2i-1})=N_{C_{2h+1}}(u_{2i})$ for $1\leq i\leq p$.
 Furthermore, $d_{C_{2h+1}}(u_{j})=d_G(u_{j})-1\geq h-1$ for $1\leq j\leq 2p$.
Since $C_{2h+1}$ is a longest cycle in $G$, the distance along $C_{2h+1}$ between any two vertices in $N_{C_{2h+1}}(u_1)$ is at least 3, which implies
 that $3(h-1)\leq 2h+1$. Thus $3\leq h\leq 4$.
We claim that $h=4$ (Otherwise $h=3$ and thus either $F=P_7\bigcup P_3$ or $F=2P_5$.
 Moreover, the distance along $C_7$ between  two vertices in $N_{C_7}(u_1)$ is exactly 3.
 So  we assume without loss of generality that
 $N_{C_7}(u_1)=N_{C_7}(u_2)=\{v_1, v_4\}$.  Since $F\nsubseteq G$,  it follows that $N_{C_7}(u)\bigcap \{v_1,v_2,v_3,v_4,v_5,v_7\}=\varnothing$
 for all $u\in U\backslash \{u_1,u_2\}$.
 Hence $N_{C_7}(u)\subseteq\{v_6\}$,
 which contradicts  $d_{C_7}(u)\geq2$.).
Hence either $F=P_9\bigcup P_3$ or $ F=P_7\bigcup P_5$.  Moreover,  the distance along $C_9$ between any two vertices in $N_{C_9}(u_1)$ is exactly 3.
So we assume  without loss of generality   that $N_{C_9}(u_1)=N_{C_9}(u_2)=\{v_1, v_4,v_7\}$.
  Since $F\nsubseteq G$, it follows that  $N_{C_9}(u_{2i-1})=N_{C_9}(u_{2i})=\{v_1, v_4,v_7\}$ for $1\leq i \leq p$, and $q=0$.
 So $n$ must be odd. In addition, since  $F\nsubseteq G$, $G[\{v_2, v_3, v_5, v_6,v_8,v_9\}]$ consists of  three disjoint edges $v_2v_3$,
 $v_5v_6$ and $v_8v_9$. Therefore $G=K_3\bigvee \frac{n-3}{2}K_{2}$. Moreover, $F=P_9\bigcup P_3$.
Thus the assertion holds.
\end{Proof}

\begin{Lemma}\label{Lem34}
Let $F=(\bigcup_{i=1}^kP_{2a_i})\bigcup(\bigcup_{i=1}^2 P_{2b_i+1})$, $ h=\sum_{i=1}^ka_i+\sum _{i=1}^2 b_i-1\geq3$, and $G$ be a connected graph of order $n$ with a longest cycle $C_{2h+1}$, where
$k\ge1$, $b_1\geq b_2\geq1$, and $n\geq4(2h+1)^2\binom{2h+1}{h}$.
 If   $\delta(G)\ge h$, then $F\subseteq G$, unless
 $F=P_6\bigcup 2P_3$ and $ G\subseteq K_3\bigvee \frac{n-3}{2}K_2$ for odd $n$.
\end{Lemma}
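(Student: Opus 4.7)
The plan is to reduce the case $k\ge 1$ to the case $k=0$ already settled in Lemma~\ref{Lem33}, in exact analogy with how Corollary~\ref{cor32} is derived from Corollary~\ref{cor31} for the longest cycle $C_{2h}$ setting. Define $F':=P_{2b'_1+1}\bigcup P_{2b_2+1}$ with $b'_1:=\sum_{i=1}^k a_i+b_1$, so that $h'=b'_1+b_2-1=h\ge 3$ and $b'_1\ge b_2$. Since $P_{2b'_1+1}$ has $2\sum a_i+2b_1+1$ vertices, it contains $(\bigcup_{i=1}^k P_{2a_i})\bigcup P_{2b_1+1}$ as a spanning linear forest (just delete the bridging edges), whence $F\subseteq F'$. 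Consequently $F\nsubseteq G$ forces $F'\nsubseteq G$, and Lemma~\ref{Lem33} applied to $F'$ leaves only two scenarios: (a) $G\subseteq S^+_{n,h}$, or (b) $F'=P_9\bigcup P_3$ and $G\subseteq K_3\bigvee \tfrac{n-3}{2}K_2$ with $n$ odd.

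In scenario (a) the plan is to embed $F$ into $G$ directly, contradicting $F\nsubseteq G$ and thereby ruling this case out. First, since $G$ has a longest cycle of length $2h+1$ and $G\subseteq S^+_{n,h}$, the unique ``bottom'' edge $y_1y_2$ of the $K_2$-part must lie in $E(G)$; otherwise $G$ would be a subgraph of $S_{n,h}$, whose longest cycle has length at most $2h$. The condition $\delta(G)\ge h$ then forces every vertex of the $\overline{K}_{n-h-2}$ part to retain all of its edges to the top clique $K_h$, and forces each of $y_1,y_2$ to be adjacent to at least $h-1\ge 2$ top vertices. I would then route the smallest even component $P_{2a_k}$ through the edge $y_1y_2$: take $P_2=y_1y_2$ when $a_k=1$, or otherwise $y_1y_2\,x_{i_1}z_{j_1}\cdots x_{i_{a_k-1}}z_{j_{a_k-1}}$, choosing $x_{i_1}$ among the $\ge h-1$ top-neighbors of $y_2$. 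This consumes $a_k-1$ top and $a_k+1$ bottom vertices. The remaining forest $(\bigcup_{i<k}P_{2a_i})\bigcup P_{2b_1+1}\bigcup P_{2b_2+1}$ needs exactly $h-a_k+1$ top vertices (placing both odd paths with bottom endpoints, of the form $z\,x\,z\,x\cdots x\,z$) and embeds greedily into the complete bipartite graph left on the $h-a_k+1$ unused top vertices and the plentiful unused bottom vertices, since $n$ is large.

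In scenario (b), $F'=P_9\bigcup P_3$ yields $\sum_{i=1}^k a_i+b_1=4$ and $b_2=1$, leaving exactly six candidates for $F$ with $k\ge 1$ and $a_1\ge\cdots\ge a_k\ge 1$: namely $P_6\bigcup 2P_3$, $P_4\bigcup P_5\bigcup P_3$, $P_2\bigcup P_7\bigcup P_3$, $P_4\bigcup P_2\bigcup 2P_3$, $2P_2\bigcup P_5\bigcup P_3$, and $3P_2\bigcup 2P_3$. For each of the last five I would exhibit an explicit vertex-disjoint embedding into $K_3\bigvee \tfrac{n-3}{2}K_2$, for instance using patterns like $P_5=z_1z_2x_1z_3z_4$ (traversing two bottom pairs and one top vertex) and $P_3=z\,x\,z$ (a top vertex flanked by two bottom vertices from distinct pairs); this contradicts $F\nsubseteq G$. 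Only $F=P_6\bigcup 2P_3$ genuinely fails to embed, because any $P_6$ inside $K_3\bigvee \tfrac{n-3}{2}K_2$ uses at least two top vertices (the bottom alone is a matching, containing no $P_3$), and each $P_3$ uses at least one top vertex, so $P_6\bigcup 2P_3$ would require $\ge 4$ top vertices while only $3$ are available. This pins down the unique exception stated in the lemma.

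The main obstacle I anticipate is scenario (a): ensuring the embedding survives in a possibly proper subgraph $G$ of $S^+_{n,h}$, where $y_1$ and $y_2$ may each be missing one top-neighbor. The routing of $P_{2a_k}$ therefore has to choose the anchor top vertex $x_{i_1}$ among the guaranteed $\ge h-1\ge 2$ top-neighbors of $y_2$ (using $h\ge 3$ crucially), and the bookkeeping of which remaining top vertices serve which components must be compatible with the minimum-degree-forced complete bipartite skeleton between $K_h$ and $\overline{K}_{n-h-2}$.
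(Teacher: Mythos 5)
Your proposal is correct and follows essentially the same route as the paper: replace $F$ by $F'=P_{2b_1'+1}\bigcup P_{2b_2+1}$ with $b_1'=\sum_{i=1}^k a_i+b_1$, invoke Lemma~\ref{Lem33}, and then dispose of the two resulting scenarios. The paper merely asserts without detail that $G\subseteq S^+_{n,h}$ forces $F\subseteq G$ and that all candidates except $P_6\bigcup 2P_3$ embed in $K_3\bigvee\frac{n-3}{2}K_2$; your explicit routing of one even path through the edge $y_1y_2$ and your top-vertex counting supply exactly the missing justification (and your list of six candidates is in fact the correct one).
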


\begin{proof}
Suppose $F\nsubseteq G$.
Let  $F'=\bigcup_{i=1}^2 P_{2b_i'+1}$, where $b_1'=\sum_{i=1}^k a_i+b_1$ and $b_2'=b_2$. We have $ F'\nsubseteq G $.
By Lemma~\ref{Lem33},   either $G\subseteq S_{n,h}^+$,  or  $F'=P_9\bigcup P_3$ and $G\subseteq K_3\bigvee \frac{n-3}{2}K_2$ for odd $n$.
If $G\subseteq S_{n,h}^+$, then $F\subseteq G$, a contradiction.
 Thus we assume that $F'=P_9\bigcup P_3$ and $G\subseteq K_3\bigvee \frac{n-3}{2}K_2$ for odd $n$.
Hence $F\in \{P_2\bigcup P_7\bigcup P_3, P_2\bigcup 2P_5, P_4\bigcup P_5\bigcup P_3,
P_6\bigcup 2P_3,
2P_2\bigcup P_5\bigcup P_3,
 P_4\bigcup P_2\bigcup 2P_3,\\
 3P_2\bigcup 2P_3\}$.
In addition, since $F\nsubseteq G$,  it follows that $F=P_6\bigcup 2P_3$.
Thus the assertion holds.
\end{proof}

  Theorem~\ref{Thm5} can be stated as follows.

 \begin{Theorem}\label{Thm5-rep}
Let $F=(\bigcup_{i=1}^k P_{2a_i})\bigcup (\bigcup_{i=1}^2 P_{2b_i+1})$, $ h=\sum _{i=1}^{k}a_i+\sum _{i=1}^2 b_i-1\geq2$,
and $G$ be a 2-connected graph of order $n$,
where $k\geq0$, $a_1\geq \dots \geq a_{k}\geq1$, $b_1\geq b_2$, and $n\geq4(2h+1)^2\binom{2h+1}{h}$.

(a). If  $\delta(G)\geq h$ and $k=0$, then $F\subseteq G$, unless one of the following holds:\\
 (i). $G\subseteq S^+_{n,h}$;\\
  (ii). $F=P_7\bigcup P_3$  and $G\subseteq K_2\bigvee \frac{n-2}{2}K_2$, where $n$ is even; \\
   (iii).    $F=P_9\bigcup P_3$ and $G\subseteq K_3\bigvee \frac{n-3}{2}K_2$, where $n$ is odd.

(b). If $\delta(G)\geq h$ and  $k\geq1$, then $F\subseteq G$, unless one of the following holds:\\
(iv). $G\subseteq S_{n,h}$;\\
  (v). $F= P_4\bigcup 2P_3$ and $G\subseteq K_2\bigvee \frac{n-2}{2}K_2$, where $n$ is even ;\\
   (vi). $F=P_6\bigcup 2P_3$ and $G\subseteq K_3\bigvee \frac{n-3}{2}K_2$, where  $n$ is odd.
\end{Theorem}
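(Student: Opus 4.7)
The plan is to assume that $F\not\subseteq G$ and derive that $G$ must be one of the listed exceptional graphs. The first observation is that $\sum_{i=1}^{k}2a_i+\sum_{i=1}^{2}(2b_i+1)=2h+4$, so $F\not\subseteq G$ gives $P_{2h+4}\not\subseteq G$. Combined with $\delta(G)\ge h$ and the $2$-connectedness of $G$, Lemma~\ref{Lem9} supplies a longest cycle $C_l$ with $l\ge 2h$, while $P_{2h+4}\not\subseteq G$ forces $l\le 2h+2$. This will let us organize the whole argument around the three possible values of $l$.

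I would first dispose of the small case $h=2$ separately. Under the hypotheses of Theorem~\ref{Thm5}, $h=2$ forces $F=P_5\cup P_3$ (when $k=0$) or $F=P_2\cup 2P_3$ (when $k\ge 1$), which are precisely the linear forests treated by Lemma~\ref{101}(ii) and \ref{101}(iii). Since $G$ is $2$-connected, all of the exceptional graphs appearing there which contain a cut vertex, namely $H_n^1$, $H_n^2$, and $L_{t,2}$, are automatically excluded, and the only surviving possibilities are $G\subseteq S_{n,2}^+$ in part~(a) and $G\subseteq S_{n,2}$ in part~(b), matching conclusions~(i) and~(iv).

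For the main case $h\ge 3$, Lemma~\ref{lem31} already tells us that the longest cycle has length $l\in\{2h,2h+1\}$, so it suffices to perform a clean case split. When $l=2h$ and $k=0$, Corollary~\ref{cor31} gives either $G\subseteq S_{n,h}\subseteq S_{n,h}^+$ (conclusion~(i)) or $F=P_7\cup P_3$ with $G\subseteq K_2\bigvee\tfrac{n-2}{2}K_2$ (conclusion~(ii)); when $l=2h$ and $k\ge 1$, Corollary~\ref{cor32} yields conclusion~(iv) or~(v). When $l=2h+1$ and $k=0$, Lemma~\ref{Lem33} gives conclusion~(i) or~(iii); when $l=2h+1$ and $k\ge 1$, Lemma~\ref{Lem34} gives conclusion~(vi). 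Each branch therefore ends in one of the permitted exceptional structures, contradicting our standing assumption.

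In short, the main theorem is a short case analysis once the preparatory lemmas of Section~4 are in hand, so the intellectual content is already loaded into Lemmas~\ref{lem31}--\ref{Lem34} and Corollaries~\ref{cor31}--\ref{cor32}. The only step requiring any real care is the compatibility check at $h=2$: one must verify that the large-$n$ assumption of Theorem~\ref{Thm5} dominates the $n\ge 8$ hypothesis of Lemma~\ref{101}, and that every cut-vertex exception listed in Lemma~\ref{101} is killed by the $2$-connectedness of $G$. This verification is the main subtlety; once it is made, the remaining casework is a direct bookkeeping exercise through the three cycle lengths $l\in\{2h,2h+1\}$ and the two parities $k=0$ versus $k\ge 1$.
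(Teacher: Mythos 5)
Your proposal is correct and follows essentially the same route as the paper: dispose of $h=2$ via Lemma~\ref{101}(ii)--(iii) and the observation that $2$-connectedness eliminates the cut-vertex exceptions, then for $h\geq 3$ invoke Lemma~\ref{lem31} to restrict the longest cycle to $l\in\{2h,2h+1\}$ and finish with Corollary~\ref{cor31}/Lemma~\ref{Lem33} in part (a) and Corollary~\ref{cor32}/Lemma~\ref{Lem34} in part (b). This matches the paper's proof of Theorem~\ref{Thm5-rep} step for step.
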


\begin{Proof}
Suppose $F\nsubseteq G$. If $h=2$, then $F=P_5\bigcup P_3$.  By Lemma~\ref{101} $(ii)$,  either $G\subseteq S_{n,2}^+$,
$G\subseteq H_n^1$, $G\subseteq H_n^2$, or $G=L_{t,2}$ for $n=2t-1$.
However, $G$ is not 2-connected whenever   $G\subseteq H_n^1$, $G\subseteq H_n^2$, or $G=L_{t,2}$. Hence  $G\subseteq S_{n,2}^+$.
Next we assume that $h\geq3$.      Let $C_l$ be the longest cycle in $G$.   By Lemma~\ref{lem31}, $2h\leq l\leq 2h+1$.
It is easy to see that Corollary~\ref{cor31} together with Lemma~\ref{Lem33}  implies that  $G\subseteq S_{n,h}^+$,
 $F= P_7\bigcup P_3$ and $G\subseteq K_2\bigvee \frac{n-2}{2}K_2$ for  even $n$,
 or  $F=P_9\bigcup P_3$ and $G\subseteq K_3\bigvee \frac{n-3}{2}K_2$ for odd $n$.

(b).  Suppose  $F\nsubseteq G$. If $h=2$, then $F=P_2\bigcup 2P_3$.
 By Lemma~\ref{101} $(iii)$,   either $G\subseteq S_{n,2}$, or  $G=L_{t,2}$ for $n=2t-1$.
However, if   $G=L_{t,2}$, then $G$ is not 2-connected.   Hence    $G\subseteq S_{n,2}$.
Next we assume that $h\geq3$.       Let $C_l$ be the longest cycle in $G$.       By Lemma~\ref{lem31}, $2h\leq l\leq 2h+1$.
It is easy to see that  Corollary~\ref{cor32} together with Lemma~\ref{Lem34} implies that
 $G\subseteq S_{n,h}$, $F= P_4\bigcup 2P_3$ and $G\subseteq K_2\bigvee \frac{n-2}{2}K_2$ for even $n$,
 or $F=P_6\bigcup 2P_3$ and $G\subseteq K_3\bigvee \frac{n-3}{2}K_2$ for odd $n$. Thus the assertion holds.
   \end{Proof}

We will use the next two lemmas in  Theorem~\ref{Thm2}.

\begin{Lemma}\label{Lem102}
Let $F=\bigcup_{ i=1}^2 P_{2b_i+1}$, $ h=\sum _{i=1}^2 b_i-1\geq3$, and $G$ be a connected  graph of order $n$  with at least one cut vertex,  where
$b_1\geq b_2\geq1$ and $n\geq2h+4$.
  If   $\delta(G)\geq h$, then $F\subseteq G$, unless one of the following holds:\\
 (i)  $F=P_{2b_1+1}\bigcup P_{2b_1-1}$ and $G=L_{t,h}$, where $n=th+1$;\\
  (ii) $F=2P_{2b_1+1}$ and $G=U_{3,h}$, where $n=3h+3$;\\
 (iii) $F=2P_{2b_1+1}$ and $G\subseteq L_{t_1,t_2,h,h+1}$, where   $n=t_1h+t_2(h+1)+1$;\\
 (iv)  $F=2P_{2b_1+1}$ and $ G\subseteq F_{t_1,t_2,h,h+1}$, where  $n=t_1h+(t_2+1)(h+1)+1$;\\
 (v)  $F=2P_{2b_1+1}$ and $G\subseteq T_{t_1,t_2,h,h+1}$, where  $n=t_1h+(t_2+2)(h+1)+1$.
\end{Lemma}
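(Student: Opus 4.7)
Since $|V(F)|=2h+4$, the hypothesis $F\not\subseteq G$ gives $P_{2h+4}\not\subseteq G$. The argument runs on the block-cut tree of $G$, which has at least two end blocks. First I would bound $|V(B)|\in\{h+1,h+2\}$ for every end block $B$. The lower bound is immediate: any non-cut vertex $v\in V(B)$ has $N_G(v)\subseteq V(B)$, so $|V(B)|\ge h+1$. For the upper bound, if $|V(B)|\ge h+3$ then $B$ is $2$-connected with $d_B(v)\ge h$ for every non-cut vertex; Lemma~\ref{Lem9} produces a cycle of order $\ge 2h$ inside $B$, while Lemma~\ref{Lem7} gives a path of order $\min\{|V(B)|,2h\}\ge h+3$ starting at the cut vertex of $B$. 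Combining this long path with a second path of at least $h+1$ vertices in another end block (via Lemma~\ref{Lem7}) and a short connector produces two vertex-disjoint odd paths of lengths $2b_1+1$ and $2b_2+1$, contradicting $F\not\subseteq G$. Next, for any two end blocks $B_1,B_2$ with cut vertices $u_1,u_2$ I would show that every $u_1$-$u_2$ path internally disjoint from $V(B_1)\cup V(B_2)$ has order at most $3$: a longer such path, spliced with the $P_{h+1}$'s inside $B_1,B_2$ given by Lemma~\ref{Lem7}, creates $P_{2h+4}\subseteq G$. This is the exact analogue of Claim~3 in the proof of Theorem~\ref{Thm4-rep}, but with the upper threshold shifted from $3$ to $4$.

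With these two structural restrictions in hand I would split on whether all end blocks share one cut vertex. \emph{Case~(A): all end blocks meet at a common cut vertex $c$.} By the depth bound every block of $G$ contains $c$, and by the size bound each block is $K_{h+1}$ or $K_{h+2}$, whence $G\subseteq L_{t_1,t_2,h,h+1}$. The obstruction to embedding $F$ is that any odd path of order $\ge h+2$ must pass through $c$; if $b_1\ge b_2+2$ one can place $P_{2b_2+1}$ (of order $\le h$) inside some $K_h=K_{h+1}-c$ and route $P_{2b_1+1}$ through $c$, embedding $F$. If $b_1=b_2+1$ and $t_2\ge1$, one places $P_{2b_2+1}=P_{h+1}$ inside some $K_{h+1}=K_{h+2}-c$; if $b_1=b_2+1$ and $t_2=0$ this fails and gives case~(i); if $b_1=b_2$ both paths demand $c$ and the configuration is case~(iii). \emph{Case~(B): two end blocks have distinct cut vertices $u_1\ne u_2$.} The depth bound forces the connection between $u_1,u_2$ to be either a single edge or a $2$-edge path $u_1wu_2$. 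In the latter case $d_G(w)\ge h$ forces $w$ to share a $K_{h+1}$ with both $u_1$ and $u_2$, producing three $K_{h+1}$ blocks glued at a triangle of cut vertices, i.e.\ $G=U_{3,h}$ (case~(ii)). In the bridge case, calling the centre of the attached $L$-part $c$, a further application of the splicing argument shows that at most two bridges can be present at $c$, yielding $G\subseteq F_{t_1,t_2,h,h+1}$ (case~(iv)) or $G\subseteq T_{t_1,t_2,h,h+1}$ (case~(v)). In each of (ii), (iv), (v) the parity check forces $F=2P_{2b_1+1}$.

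The main obstacle will be Case~(B). One must simultaneously pin down the end-block orders, the number and length of the bridges at $c$, and the precise local structure around the triangle of cut vertices, all from the combined hypotheses $\delta(G)\ge h$, $P_{2h+4}\not\subseteq G$, and Lemmas~\ref{Lem7} and~\ref{Lem9}. The parity argument identifying $F=2P_{2b_1+1}$ as the unique unembeddable $F$ in cases~(ii), (iv), (v) is the technical heart: one routes one odd path across the bridge (respectively across two triangle edges of $U_{3,h}$) and then must verify that the complementary odd path of the same length $h+2$ cannot fit inside the remaining blocks without re-using an already-occupied cut vertex. The parity check in Case~(A), by contrast, is a cleaner counting argument using the path inventory of a book of $K_{h+1}$'s and $K_{h+2}$'s meeting at a single vertex.
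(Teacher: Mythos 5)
Your overall skeleton --- deduce $P_{2h+4}\nsubseteq G$, bound the orders of the end blocks and the distance between their cut vertices via Lemma~\ref{Lem7}, then split according to whether all end blocks hang on one cut vertex or two end blocks have distinct cut vertices joined by an edge or by a path of order $3$ --- is exactly the paper's, and your case analysis lands on the same five exceptional configurations. But there is a genuine gap in your first structural step, the claim that every end block has order at most $h+2$. Your mechanism is: a path of order $\min\{|V(B)|,2h\}\ge h+3$ from Lemma~\ref{Lem7} in a big end block, a path of order $\ge h+1$ in a second end block, spliced through a connector. When the two end blocks share their cut vertex $c$ (i.e.\ $G$ has exactly two blocks), this splice yields a single path of order only $(h+3)+(h+1)-1=2h+3$, and the two pieces on either side of $c$ form at best a disjoint $P_{h+2}\bigcup P_{h+1}$ with $2h+3$ vertices; neither object can contain $F$, which has $2h+4$ vertices. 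Pushing harder, one is forced into $|V(B_1)|=h+1$ and $|V(B_2)|=n-h\ge h+4$, and the spliced path then has order $h+\min\{n-h,2h\}$, which is $\ge 2h+4$ (a contradiction) for $h\ge4$ but equals $2h+3=9$ for $h=3$, while $F\in\{P_7\bigcup P_3,\,2P_5\}$ needs $10$ vertices. This configuration is not vacuous, and the paper treats it with a separate structural lemma (Lemma~\ref{Lemma16}(iii): a $2$-connected graph of order $\ge 7$ with minimum degree $3$ off one vertex, with a pendant $P_4$ attached at that vertex, contains both $P_7\bigcup P_3$ and $2P_5$), whose proof is a nontrivial analysis of how vertices outside a $P_6$ attach to it. Your plan contains no substitute for this step.

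A second, smaller imprecision: even with at least three end blocks at $c$, the clean way to exclude a block of order $\ge h+3$ is to exhibit the specific disjoint pair $P_{h+2}\bigcup P_{2h+1}$ (a $P_{h+2}$ inside the big block minus $c$, and a $P_{2h+1}$ threaded through $c$ using two other end blocks), which always contains $F$ because $2b_2+1\le h+2$ and $2b_1+1\le 2h+1$; your formulation names only two end blocks and does not say where the required $P_{2b_1+1}$, of order $h+2+(b_1-b_2)\ge h+2$, is to live when $b_1>b_2$. The remainder of your outline --- in particular identifying $F=2P_{2b_1+1}$ in cases (ii)--(v) via the forbidden pairs $P_{h+1}\bigcup P_{2h+1}$ for $b_1\ge b_2+1$ and $P_h\bigcup P_{2h+1}$ for $b_1\ge b_2+2$ --- matches the paper and would go through once the block-size bound is repaired.
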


\begin{Proof}
Suppose  $F\nsubseteq G$. Since $\sum_{i=1}^{k}2a_i+\sum_{i=1}^2 (2b_i+1)=2h+4$,   we have $P_{2h+4}\nsubseteq G$.
Note that $h+2\geq 2b_2+1$ and $2h+1\geq2b_1+1$.  We have $P_{h+2}\bigcup P_{2h+1}\nsubseteq G$.
Note that $h+1\geq 2b_2+1$ and $2h+1\geq 2b_1+1$ for $b_1\geq b_2+1$.  We  also have $P_{h+1}\bigcup P_{2h+1}\nsubseteq G$ for $b_1\geq b_2+1$.
 Note that $h\geq2b_2+1$ and $2h+1\geq 2b_1+1$ for $b_1\geq b_2+2$. Moreover,  $ P_h\bigcup P_{2h+1}\nsubseteq G$  for $b_1\geq b_2+2$.

 Since $G$ has at least one cut vertex,  it  has  at least two end blocks.
Furthermore, since  $\delta(G)\geq h$,  it follows that $|V(B)|\geq h+1$  for every end block  $B$.
Choose  two end blocks  $B_1$ and $B_2$  of $G$   such that $p(B_1,B_2)$ is as large as possible, where $p(B_1,B_2)$ is
the order of a longest path between  the cut vertex $u_1$ of $G$ in $V(B_1)$ and the cut vertex $u_2$ of $G$ in $V(B_2)$.
 Lemma~\ref{Lem7} together with $P_{2h+4}\nsubseteq G$ implies that $p(B_1,B_2)\leq 3$. We assume without loss of generality that $|V(B_1)|\leq|V(B_2)|$.
We consider the following three cases.

\textbf{Case 1}: $p(B_1,B_2)=1$. This implies that  all blocks in $G$ are end blocks and they share a common cut vertex of $G$.
We first prove the following Claim.

{\bf Claim:} $G$ has at least three end blocks.

Suppose that   $G$ has exactly two end blocks $B_1$ and $B_2$.
Since $n\geq 2h+4$ and  $|V(B_1)|\leq|V(B_2)|$, if $|V(B_1)|\geq h+2$ then  $|V(B_2)|\geq h+3$.
By Lemma~\ref{Lem7},  $P_{2h+4}\subseteq G$, a contradiction.
Hence we assume that $|V(B_1)|=h+1$  and thus $|V(B_2)|\geq h+4$.
We claim  $h=3$ (Otherwise $h\geq 4$. By Lemma~\ref{Lem7}, $B_2$ has a path $P_{\mathrm{min}\{2h,h+4\}}$ with an end vertex $u_2$.  Since $\mathrm{min}\{2h,h+4\}\geq h+4$, we have $P_{2h+4}\subseteq G$, a contradiction.).
It follows that either $F=P_7\bigcup P_3$ or $F=2P_5$. Furthermore, by Lemmas~\ref{Lem7} and \ref{Lemma16} $(iii)$,  $F\subseteq G$, a contradiction.
This completes the claim.

By Claim above,  $G$ has at least three end blocks.  Since  $P_{h+2}\bigcup P_{2h+1}\nsubseteq G$, all blocks have order at most $h+2$ and
$P_h\bigcup P_{2h+1}\subseteq G$. Note  $P_h\bigcup P_{2h+1}\nsubseteq G$ for  $b_1\geq b_2+2$. Hence   $b_2\leq b_1\leq b_2+1$.
If $b_1=b_2$, then $F=2P_{2b_1+1}$ and $G\subseteq L_{t_1,t_2,h,h+1}$, where $n=t_1h+t_2(h+1)+1$.
Next we assume that $b_1=b_2+1$. Note   $P_{h+1}\bigcup P_{2h+1}\nsubseteq G$ for $b_1=b_2+1$.
It follows that  all  blocks have order $h+1$.
Thus $F=P_{2b_1+1}\bigcup P_{2b_1-1}$ and $G=L_{t,h}$, where $n=th+1$.

\textbf{Case 2}: $p(B_1,B_2)=2$.  This implies that all blocks in $G$ are end blocks.   Lemma~\ref{Lem7} together with $P_{2h+4}\nsubseteq G$
 implies that $|V(B_1)|=h+1$ and $h+1\leq |V(B_2)|\leq h+2$.
Thus $G$ is a  graph obtained by adding an edge to the centers of $L_{t_1,t_2,h,h+1}$
 and $L_{t_3,t_4,h,h+1}$, where $n=(t_1+t_3)h+(t_2+t_4)(h+1)+2$, $t_1+t_2\geq1$, and $t_3+t_4\geq1$.
Furthermore, since  $P_{h+2}\bigcup P_{2h+1}\nsubseteq G$,  either $t_1+t_2=1$ or $t_3+t_4=1$.
We assume without loss of generality that $t_1+t_2=1$. Since  $n\geq2h+4$,  it follows that $t_3+t_4\geq2$.
So $P_{h+1}\bigcup P_{2h+1}\subseteq G$.
Note  $P_{h+1}\bigcup P_{2h+1}\nsubseteq G$ for $b_1\geq b_2+1$. Then   $b_1=b_2$  and thus $F=2P_{2b_1}$.
Moreover, since $P_{h+2}\bigcup P_{2h+1}\nsubseteq G$,  it follows that $t_2=0$ and thus $t_1=1$.
Therefore, $F=2P_{2b_1+1}$ and  $G\subseteq F_{t_3,t_4,h,h+1}$,  where $n=t_3h+(t_4+1)(h+1)+1$.

\textbf{Case 3}:  $p(B_1,B_2)=3$.  Lemma~\ref{Lem7} together with $P_{2h+4}\nsubseteq G$ implies that  $|V(B_1)|=|V(B_2)|=h+1$.
Note  $P_{h+2}\bigcup P_{2h+1}\nsubseteq G$.  The end block  $B_i$ is the unique end block with $u_i\in V(B_i)$ for $1\leq i\leq2$.
 Denote $u_1uu_2$ by the path between $u_1$ and $u_2$.
Since $\delta(G)\geq 3$,  there exists an end block of order at least $h+1$ with the vertex $u$.
Hence $P_{h+1}\bigcup P_{2h+1}\subseteq G$.  Note   $P_{h+1}\bigcup P_{2h+1}\nsubseteq G$ for $b_1\geq b_2+1$. So $b_1=b_2$ and thus $F=2P_{2b_1}$.
Furthermore, since $p(B_1,B_2)=3$, there is exactly one path $P_3$ between $u_1$ and $u_2$.
Since $P_{h+2}\bigcup P_{2h+1}\nsubseteq G$, if $u_1u_2\in E(G)$ then there is exactly one end block with the vertex $u$ and it has order $h+1$.
Hence $F=2P_{2b_1+1}$ and  $G=U_{3,h}$,  where $n=3h+3$.
Next we assume  $u_1u_2\notin E(G)$.  Obviously, $u_1u$ and $u_2u$ are both cut edges of $G$.
Since $P_{h+2}\bigcup P_{2h+1}\nsubseteq G$, all blocks that contains  $u$ have order at most $h+2$.
Therefore  $F=2P_{2b_1+1}$ and $G\subseteq T_{t_1,t_2,h,h+1}$, where $n=t_1h+(t_2+2)(h+1)+1$.

Thus the assertion holds.
\end{Proof}

\begin{Lemma}\label{Lem103}
Let $F=(\bigcup_{i=1}^k P_{2a_i})\bigcup (\bigcup_{ i=1}^2 P_{2b_i+1})$, $ h=\sum_{i=1}^{k}a_i+\sum _{i=1}^2 b_i-1\geq2$,
and $G$ be a connected graph of order $n$  with at least one cut vertex,
where $k\geq1$, $a_1\geq \cdots \geq a_{k}\geq1$, $b_1\geq b_2\geq3$,  and  $n\geq2h+4$.
If $\delta(G)\geq h$, then $F\subseteq G$, unless $F=P_2\bigcup 2P_{2b_1+1}$ and $G=L_{t,h}$, where  $n=th+1$.
\end{Lemma}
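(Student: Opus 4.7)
The plan is to run the same block-structural case analysis used in Lemma~\ref{Lem102}, and then exploit the presence of an even-path component $P_{2a_1}$ (with $a_1 \geq 1$, $k \geq 1$) to collapse essentially all of the exceptional graphs that appeared there into the single exception $G = L_{t,h}$ with $F = P_2 \bigcup 2P_{2b_1+1}$.

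First I would perform the standard reductions. Suppose $F \nsubseteq G$; since $\sum_{i=1}^{k} 2a_i + \sum_{i=1}^{2}(2b_i+1) = 2h+4$, we have $P_{2h+4} \nsubseteq G$. Because $G$ has a cut vertex, it has at least two end blocks, and $\delta(G) \geq h$ forces $|V(B)| \geq h+1$ for every end block $B$. Choosing end blocks $B_1, B_2$ that maximize $p(B_1, B_2)$, Lemma~\ref{Lem7} combined with $P_{2h+4} \nsubseteq G$ yields $p(B_1, B_2) \leq 3$, giving three main cases. So far this is identical to Lemma~\ref{Lem102}.

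Second, I would establish the key \emph{packing lemma}: whenever $G$ contains one of the ``two-block'' linear forests $P_{2h+1} \bigcup P_{h+1}$, $P_{2h+1} \bigcup P_{h+2}$, or $P_{2h} \bigcup P_{h+2} \bigcup P_{h+1}$, then $F \subseteq G$. The verification is a direct packing: embed $P_{2b_1+1}$ inside the $P_{2h+1}$, embed $P_{2b_2+1}$ inside the next largest component, and fit the remaining $\bigcup_{i=1}^{k} P_{2a_i}$ (whose total order is $2h+2-2b_1-2b_2$) into the leftover segments. The relevant inequalities $2h+1 \geq (2b_1+1) + 2a_1$ and $h+1 \geq 2b_2+1$ follow from $h = \sum a_i + b_1 + b_2 - 1$ together with $k\geq 1$ and $a_i\geq 1$; the only situation in which the packing truly fails is when $2b_1+1 = h+1$, so that every ``long'' path of $G$ must be essentially a Hamilton path of a $K_{h+1}$ block passing through the cut vertex, which forces $\sum_{i=1}^{k} a_i = 1$, hence $k = 1$, $a_1 = 1$, and $b_1 = b_2$.

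Third, I would walk through the three cases of $p(B_1, B_2)$. In each of them, the analysis of Lemma~\ref{Lem102} describes the possible structure of $G$, leading to $G$ being one of $L_{t,h}$, $U_{3,h}$, $L_{t_1, t_2, h, h+1}$, $F_{t_1, t_2, h, h+1}$ or $T_{t_1, t_2, h, h+1}$. For each of these except $L_{t,h}$ in the borderline regime of the packing lemma, the packing lemma produces $F \subseteq G$: in $U_{3,h}$, $F_{t_1, t_2, h, h+1}$, and $T_{t_1, t_2, h, h+1}$ there are three pairwise-internally-disjoint copies of $K_{h+1}$ (or $K_{h+2}$) sitting around different cut vertices, so two disjoint $P_{2b_1+1}$'s can be found using different blocks and the remaining even paths fit in a further block; similarly for $L_{t_1, t_2, h, h+1}$ whenever $t_2 \geq 1$ or $2b_1 + 1 < h+1$. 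The exception survives exactly when $G = L_{t, h}$ (every block a $K_{h+1}$ sharing a single cut vertex) \emph{and} $2b_1 + 1 = h + 1$; combined with the $k \geq 1$ constraint this yields $k = 1$, $a_1 = 1$, $b_1 = b_2$, i.e., $F = P_2 \bigcup 2P_{2b_1+1}$ and $h = 2b_1$, as required.

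The main obstacle will be the bookkeeping in the packing lemma step: one must verify many small numerical inequalities in $h, a_i, b_i, k$ to rule out each candidate exceptional graph of Lemma~\ref{Lem102}, and must also treat the low-$h$ border cases (in particular $h = 2, 3, 4$) by hand using Lemmas~\ref{Lem7} and \ref{Lemma16}, since there the block sizes are comparable to the total order and the generic cycle-extension arguments become tight. In the end, however, the only genuine structural obstruction to embedding $F$ is the ``all long paths forced through one vertex'' phenomenon inherent to $L_{t,h}$, and this gives precisely the stated exception.
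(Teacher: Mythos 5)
Your proposal is correct in outline, but it takes a genuinely different and much heavier route than the paper. The paper's proof is a one-line reduction: since $k\geq 1$, it concatenates all the even components with the longer odd component into a single odd path, setting $F'=P_{2b_1'+1}\bigcup P_{2b_2'+1}$ with $b_1'=\sum_{i=1}^k a_i+b_1$ and $b_2'=b_2$ (same $h$). Because a path of order $m$ contains every linear forest of total order at most $m$, we have $F\subseteq F'$, so $F\nsubseteq G$ forces $F'\nsubseteq G$, and Lemma~\ref{Lem102} applies verbatim to $F'$; since $k\geq1$ gives $b_1'>b_2'$, the exceptions (ii)--(v) of that lemma (all requiring $F'=2P_{2b_1'+1}$) disappear, and unwinding the surviving case $F'=P_{2b_1'+1}\bigcup P_{2b_1'-1}$ yields $k=1$, $a_1=1$, $b_1=b_2$ at once. (The case $h=2$ is dispatched separately via Lemma~\ref{101}.) You instead propose to re-run the entire block analysis and then pack $F$ by hand into each candidate extremal graph. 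That can be made to work, but note two points. First, you cannot cite the \emph{statement} of Lemma~\ref{Lem102} for the structure of $G$, since its hypothesis concerns a different forest; you must re-derive the non-containment facts $P_{2h+4}\nsubseteq G$, $P_{h+2}\bigcup P_{2h+1}\nsubseteq G$, and (now unconditionally, since $k\geq1$) $P_{h+1}\bigcup P_{2h+1}\nsubseteq G$ for your $F$ before the case analysis goes through --- these do hold, but they are exactly the bookkeeping the paper's reduction avoids. Second, your stated pivot ``the packing fails only when $2b_1+1=h+1$'' is not the right condition: $2b_1+1=h+1$ does not by itself force $\sum a_i=1$ (e.g.\ $k=1$, $a_1=2$, $b_1=3$, $b_2=2$ satisfies it). The correct obstruction in $L_{t,h}$ is that \emph{two} components exceed order $h$, i.e.\ $2b_2+1>h$, which together with $b_1\geq b_2$ and $k\geq1$ does force $k=1$, $a_1=1$, $b_1=b_2$. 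With that condition corrected, your argument reaches the same exceptional pair; the paper's reduction simply buys all of this for free.
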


\begin{Proof}
 Suppose  $F\nsubseteq G$. If $h=2$, then $F=P_2\bigcup 2P_3$.  By Lemma~\ref{101} $(ii)$,  we have $G\subseteq S_{n,2}^+$, $G\subseteq H_n^1$,
 $G\subseteq H_n^2$, or $G=L_{t,2}$ for $n=2t+1$.  However,  $G$ is  2-connected whenever  $G\subseteq S_{n,2}^+$ and  $F\subseteq G$ whenever
$G\subseteq H_n^1$ or  $G\subseteq H_n^2$.  Hence  $G=L_{t,2}$, where  $n=2t+1$. Next we assume that $h\geq3$. Let   $F'=\bigcup_{i=1}^2 P_{2b_i'+1}$, where $b_1'=\sum_{i=1}^k a_i+b_1$ and $b_2'=b_2$.  We have $F'\nsubseteq G $.
Note that $k\geq1$ and $b_1\geq b_2$. It follows that $b_1'>b_2'$.
By Lemma~\ref{Lem102},   $G=L_{t,h}$, where $n=th+1$ and $F'=P_{2b_1'+1}\bigcup P_{2b_1'-1}$.
Moreover, since $F'=P_{2b_1'+1}\bigcup P_{2b_1'-1}$, it follows that $k=1$, $a_1=1$, and $b_1=b_2$.
Hence $F=P_2\bigcup 2P_{2b_1+1}$. Thus the assertion holds.
\end{Proof}

  Theorem~\ref{Thm2} can be stated as follows.

\begin{Theorem}\label{Thm2-rep}
Let $F=(\bigcup_{i=1}^k P_{2a_i})\bigcup (\bigcup_{ i=1}^2 P_{2b_i+1})$, $ h=\sum_{i=1}^{k}a_i+\sum _{i=1}^2 b_i-1$,
and  $G$ be a connected  graph of order $n$  with at least one cut vertex,  where
$a_1\geq \dots \geq a_{k}\geq1$, $b_1\geq b_2\geq1$, $k\geq0$, and $n\geq2h+4$.

(a).  If   $\delta(G)\geq h\geq1$ and  $k=0$, then $F\subseteq G$, unless one of the following holds:\\
(i)  $F=P_5\bigcup P_3$ and either $G\subseteq H_n^1$ or $G\subseteq H_n^2$;\\
 (ii) $F=P_{2b_1+1}\bigcup P_{2b_1-1}$ and $G=L_{t,h}$, where $n=th+1$;\\
  (iii) $F=2P_{2b_1+1}$ and $G=U_{3,h}$, where $n=3h+3$;\\
 (iv) $F=2P_{2b_1+1}$ and $G\subseteq L_{t_1,t_2,h,h+1}$, where   $n=t_1h+t_2(h+1)+1$;\\
 (v) $F=2P_{2b_1+1}$ and  $ G\subseteq F_{t_1,t_2,h,h+1}$, where  $n=t_1h+(t_2+1)(h+1)+1$;\\
 (\romannumeral6)   $F=2P_{2b_1+1}$ and $G\subseteq T_{t_1,t_2,h,h+1}$, where  $n=t_1h+(t_2+2)(h+1)+1$.

 (b). If $\delta(G)\geq h\geq2$ an $k\geq1$, then $F\subseteq G$, unless  $F=P_2\bigcup 2P_{2b_1+1}$ and $G=L_{t,h}$, where  $n=th+1$.
\end{Theorem}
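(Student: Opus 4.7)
The plan is to split the argument along the two parts (a) $k=0$ and (b) $k\geq 1$, and within each part to separate small values of $h$ (which are handled directly by invoking Lemma~\ref{101} on the forests $2P_3$, $P_5\bigcup P_3$, and $P_2\bigcup 2P_3$) from the generic case, which is already packaged in Lemmas~\ref{Lem102} and \ref{Lem103}. Thus the proof is essentially a careful collation of previously established structural results, together with a verification that each exception produced by those lemmas is either forbidden by the cut-vertex hypothesis or matches one of the labelled families in the statement.

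For part (a), suppose $F\nsubseteq G$. When $h=1$, the constraints $b_1\geq b_2\geq 1$ and $h=b_1+b_2-1$ force $b_1=b_2=1$, hence $F=2P_3$; applying Lemma~\ref{101}(i) gives either $G=U_{3,1}$ or $G\subseteq L_{t_1,t_2,1,2}$, which are exactly cases (iii) and (iv) at $h=1$. When $h=2$, we must have $F=P_5\bigcup P_3$, and Lemma~\ref{101}(ii) produces four possibilities; since $S^+_{n,2}=K_2\bigvee(K_2\bigcup\overline{K}_{n-4})$ is $2$-connected, it has no cut vertex and can be discarded, leaving $H_n^1$, $H_n^2$, and $L_{t,2}$, which are precisely cases (i) and (ii). For $h\geq 3$, Lemma~\ref{Lem102} delivers cases (ii)--(vi) verbatim.

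For part (b), again suppose $F\nsubseteq G$. If $h=2$, the conditions $k\geq 1$, $a_i\geq 1$, $b_i\geq 1$ force $k=1$, $a_1=1$, and $b_1=b_2=1$, so $F=P_2\bigcup 2P_3$; Lemma~\ref{101}(iii) then yields either $G\subseteq S_{n,2}$ or $G=L_{t,2}$, and because $S_{n,2}=K_2\bigvee\overline{K}_{n-2}$ is $2$-connected for $n\geq 4$, only $G=L_{t,2}$ is compatible with the cut-vertex hypothesis, matching the unique exception stated in (b). For $h\geq 3$, Lemma~\ref{Lem103} gives the conclusion directly.

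The genuinely graph-theoretic work has already been done in Lemmas~\ref{Lem102}, \ref{Lem103}, and \ref{101}, so no new combinatorial step is needed here; the only real obstacle is the bookkeeping. In particular, one must verify that $S_{n,h}$, $S^+_{n,h}$, and the small extremal graphs emerging from Lemma~\ref{101} are $2$-connected in the relevant size ranges so that they can be eliminated, and one must match the remaining exceptions to the labels (i)--(vi) in the statement by re-expressing $b_1,b_2,t_1,t_2,t$ in a consistent way.
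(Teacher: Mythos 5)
Your proposal is correct and follows essentially the same route as the paper: split on $h=1$, $h=2$, and $h\geq3$, invoke Lemma~\ref{101} for the small cases and Lemmas~\ref{Lem102} and~\ref{Lem103} for $h\geq3$, and discard the exceptional graphs that are forced to be $2$-connected. The only (harmless, and shared with the paper) looseness is that ``$G\subseteq S_{n,2}$'' alone does not force $2$-connectivity; one needs to combine it with $\delta(G)\geq2$, which forces $K_{2,n-2}\subseteq G$.
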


\begin{Proof}
(a). Suppose  $F\nsubseteq G$. If $h=1$, then $F=2P_3$. By Lemma~\ref{Lem33} ($i$),  either $G=U_{3,1}$ or
$G\subseteq L_{t_1,t_2,1,2}$ for $n=t_1+2t_2+1$.
Next we assume that $h=2$. It follows that  $F=P_5\bigcup P_3$. By Lemma~\ref{Lem33} ($ii$), $G\subseteq S_{n,2}^+$,
 $G\subseteq H_n^1$, $G\subseteq H_n^2$, or $G=L_{t,2}$ for $n=2t+1$.
However, $G$ is 2-connected whenever  $G\subseteq S_{n,2}^+$. Hence  $G\subseteq H_n^1$, $G\subseteq H_n^2$, or $G=L_{t,2}$ for $n=2t+1$.
Finally we assume that $h\geq 3$. The assertion follows  by Lemma~\ref{Lem102}.

(b). The assertion follows  by Lemma~\ref{Lem103}.
\end{Proof}

\indent{\bf Acknowledgements:}

  The authors would like to thank the anonymous referee for many helpful and constructive suggestions to an earlier version of this paper, which results in an great improvement.

\end{document}